\newtheorem{theorem}{Theorem}[section]
\newtheorem{corollary}[theorem]{Corollary}
\newtheorem{lemma}[theorem]{Lemma}
\newtheorem{proposition}[theorem]{Proposition}
\theoremstyle{definition}
\newtheorem{remark}[theorem]{Remark}
\numberwithin{equation}{subsection}
\newtheorem*{ack}{Acknowledgement}
\newcommand{\IA}{\operatorname{IA}}
\newcommand{\Aut}{\operatorname{Aut}}
\newcommand{\Ker}{\operatorname{Ker}}
\newcommand{\im}{\operatorname{Im}}
\newcommand{\Inn}{\operatorname{Inn}}
\newcommand{\M}{\operatorname{M}}
\newcommand{\GL}{\operatorname{GL}}
\newcommand{\Z}{\operatorname{Z}}
\newcommand{\id}{\mathrm{id}}
\begin{document}

\title{Structure and automorphisms of pure virtual twin groups}

\author{Tushar Kanta Naik}
\address{School of Mathematical Sciences, National Institute of Science Education and Research, Bhubaneswar, HBNI, P.O. Jatni, Khurda, Odisha 752050, India.}
\email{tushar@niser.ac.in}

\author{Neha Nanda}
\address{Laboratoire de Math\'{e}matiques Nicolas Oresme UMR CNRS 6139, Universit\'{e} de Caen Normandie, 14000 Caen, France.}
\email{nehananda94@gmail.com}

\author{Mahender Singh}
\address{Department of Mathematical Sciences, Indian Institute of Science Education and Research (IISER) Mohali, Sector 81,  S. A. S. Nagar, P. O. Manauli, Punjab 140306, India.}
\email{mahender@iisermohali.ac.in}
\date{\today}

\subjclass[2020]{Primary 57K12; Secondary 57K20, 20E36}
\keywords{Doodle, pure twin group, pure virtual twin group, Reidemeister-Schreier method, right-angled Artin group,  right-angled Coxeter group, $R_\infty$-property, twin group, virtual doodle, virtual twin group}

\begin{abstract}
Study of stable isotopy classes of a finite collection of immersed circles without triple or higher intersections on closed oriented surfaces is considered as a planar analogue of virtual knot theory, a far reaching generalisation of classical knot theory. Recent works have established Alexander and Markov theorems in the planar setting. In the classical case, the role of groups is played by twin groups, a class of right-angled Coxeter groups. A new class of groups called virtual twin groups, that extends twin groups in a natural way, plays the role of groups in the virtual case. The virtual twin group $VT_n$ contains the pure virtual twin group $PVT_n$, a planar analogue of the pure Artin braid group. In this paper, we prove that the pure virtual twin group $PVT_n$ is an irreducible right-angled Artin group with trivial center and give it's precise presentation. We show that $PVT_n$ has a decomposition as an iterated semi-direct product of infinite rank free groups. We give a complete description of the automorphism group of $PVT_n$ and establish splitting of natural exact sequences of automorphism groups. As applications, we show that $VT_n$ is residually finite and $PVT_n$ has the $R_\infty$-property. 
\end{abstract}

\maketitle

\section{Introduction}
Doodles on a 2-sphere were introduced by Fenn and Taylor \cite{FennTaylor} as a finite collection of simple closed curves without triple or higher intersections on a  2-sphere. Allowing self intersection of curves, Khovanov \cite{Khovanov} extended the idea to a finite collection of closed curves without triple or higher intersections on a closed oriented surface. He also introduced an analogue of the link group for doodles and constructed several infinite families of doodles whose fundamental groups have infinite center. Recently, Bartholomew-Fenn-Kamada-Kamada \cite{BartholomewFennKamada2018, BartholomewFennKamada2018-2} extended the study of doodles to immersed circles on closed oriented surfaces of any genus, which can be thought of as planar analogue of virtual knot theory. It is a natural problem to look for invariants that could classify these geometric objects. Coloring of diagrams using a special type of algebra has been used to construct an invariant for virtual doodles \cite{BartholomewFennKamada2019}. Further, an Alexander type invariant for oriented doodles which vanishes on unlinked doodles with more than one component has been constructed in a recent work \cite{CisnerosFloresJuyumayaMarquez}.
\par

Analogous to classical knot theory, the theory of doodles is modelled on an appropriate class of groups. The role of groups for doodles on a  2-sphere is played by so called {\it twin groups}. The twin groups $T_n$, $n \ge 2$, form a special class of right-angled Coxeter groups and first appeared in the work of Shabat and Voevodsky \cite{ShabatVoevodsky}, wherein they were referred as {\it Grothendieck cartographical groups}. Later, these groups were brought to light by Khovanov \cite{Khovanov} under the present name, who also gave a geometric interpretation of these groups similar to the one for classical braid groups. Consider configurations of $n$ arcs in the infinite strip $\mathbb{R} \times  [0,1]$ connecting $n$ marked points on each of the parallel lines $\mathbb{R} \times \{1\}$ and $\mathbb{R} \times \{0\}$ such that each arc is monotonic and no three arcs have a point in common. Two such configurations are equivalent if one can be deformed into the other by a homotopy of such configurations in $\mathbb{R} \times [0,1]$ keeping the end points of arcs fixed. An equivalence class under this equivalence is called a {\it twin}. The product of two twins is defined by placing one twin on top of the other, and the collection of all twins with $n$ arcs under this operation forms a group that is isomorphic to $T_n$. Taking the one point compactification of the plane, one can define the closure of a twin on a $2$-sphere analogous to the closure of a geometric braid in $\mathbb{R}^3$. Khovanov also proved that every oriented doodle on a $2$-sphere is a closure of a twin, which is an analogue of classical Alexander Theorem. An analogue of Markov Theorem for doodles on a 2-sphere has been established recently by Gotin \cite{Gotin}.
\par

The pure twin group $PT_n$ is the kernel of the natural surjection from $T_n$ onto the symmetric group $S_n$ on $n$ symbols, and can be thought of as a planar analogue of the pure braid group. In a recent work \cite{BarVesSin}, Bardakov-Singh-Vesnin proved that $PT_n$ is free for $n = 3,4$ and not free for $n \geq 6$. Gonz\'alez-Le\'on-Medina-Roque \cite{GonGutiRoq} showed that $PT_5$ is a free group of rank $31$. A precise description of $PT_6$ has been obtained recently by Mostovoy and Roque-M\'arquez \cite{MostRoq}, wherein they proved that $PT_6$ is isomorphic to the free product of $F_{71}$ and 20 copies of  $\mathbb{Z} \oplus \mathbb{Z}$. Finally, a minimal presentation of $PT_n$ for all $n$ has been announced in a recent preprint by Mostovoy \cite{Mostovoy}. Automorphisms, (twisted) conjugacy classes and centralisers of involutions in twin groups have been explored in recent works of the authors \cite{NaikNandaSingh1, NaikNandaSingh2}. It is worth noting that (pure) twin groups have been used by physicists  \cite{HarshmanKnapp} who refer them as (pure) traid groups.
\par

It is quite reasonable to think of the study of stable isotopy classes of immersed circles without triple or higher intersection points on closed oriented surfaces as a planar analogue of virtual knot theory where the genus zero case corresponds to classical knot theory. Recall that virtual knot theory is a far reaching generalisation of classical knot theory and pioneered by Kauffman in \cite{Kauffman1999}. As noted before Alexander and Markov theorems for the genus zero case are already known with the role of groups being played by twin groups. A recent work \cite{NandaSingh2020} by Nanda and Singh proves Alexander and Markov theorems for the higher genus case. It is proved that virtual twin groups, denoted $VT_n$, introduced in \cite{BarVesSin} as abstract generalisation of twin groups, play the role of groups in the theory of virtual doodles. These correspondences can be summarised as
$$\bigcup_{n \ge 2} T_n/_{\textrm{Markov equivalence}}\quad  \longleftrightarrow \quad\textrm{Homotopy classes of doodles on 2-sphere}$$
and
$$\bigcup_{n \ge 2} VT_n/_{\textrm{Markov equivalence}} \quad\longleftrightarrow \quad\textrm{Stable equivalence classes of doodles on surfaces}.$$
\par
The virtual twin group $VT_n$ extends the twin group $T_n$ and surjects onto the symmetric group $S_n$ in a natural way. A pure analogue of the virtual twin group called the pure virtual twin group, denoted $PVT_n$, is defined as the kernel of the natural surjection from $VT_n$  onto $S_n$.
\par

The purpose of this paper is to develop the theory of (pure) virtual twin groups. We investigate their important structural aspects in detail. We begin by recalling the definition and the geometrical interpretation of virtual twin groups in Section \ref{section-prelim}. In Section \ref{section-presentation-pvtn}, we obtain a presentation of the pure virtual twin group $PVT_n$ (Theorem \ref{pvtn-right-angled-artin}), which quite interestingly turns out to be an irreducible right-angled Artin group. As an application, we deduce that $VT_n$ is residually finite (Corollary \ref{vtn-residually-finite}). In Section \ref{section-decom-semidirect-product}, we prove that $PVT_n$ can be written as an iterated semi-direct product of infinite rank free groups (Theorem \ref{semidirect-decomposition-ptn}), which is an analogue of a similar result for pure braid groups \cite{Markoff1945} and pure virtual braid groups \cite{Bardakov2004}. As a consequence, we show that $PVT_n$, and hence $VT_n$ has trivial center (Corollary \ref{center-of-PVT_n}). Section \ref{section-auto-groupo-pvtn} is devoted to the study of automorphisms of $PVT_n$. We give a complete description of $\Aut(PVT_n)$ for $n \ge 5$ in Theorem \ref{full-auto-decomposition} and also establish splitting of natural short exact sequences of automorphism groups (Theorem \ref{aut-pvtn-main}). The case $n=4$ is exotic and a complete description of $\Aut(PVT_4)$ is given in Theorem \ref{splitting-auto-pvt4}. Understanding of automorphisms of $PVT_n$ help us to conclude that   $PVT_n$ has $R_\infty$-property if and only if $n \ge 3$ (Theorem \ref{pvtn-r-infinity}). We also deduce that each $\IA$ automorphism of $PVT_n$ is inner if and only if $n=2$ or $n \ge 5$ (Corollary \ref{cor-ia-auto}).
\medskip

\section{Preliminaries}\label{section-prelim}
The {\it virtual twin group} $VT_n$, $n \ge 2$, is a generalisation of the twin group $T_n$. The group $VT_n$ has generators $\{ s_1, s_2, \ldots, s_{n-1}, \rho_1, \rho_2, \ldots, \rho_{n-1}\}$ and defining relations
\begin{eqnarray}
s_i^{2} &=&1 \hspace*{5mm} \textrm{for } i = 1, 2, \dots, n-1, \label{1}\\ 
s_is_j &=& s_js_i \hspace*{5mm} \textrm{for } |i - j| \geq 2,\label{2}\\
\rho_i^{2} &=& 1 \hspace*{5mm} \textrm{for } i = 1, 2, \dots, n-1, \label{3}\\
\rho_i\rho_j &=& \rho_j\rho_i \hspace*{5mm} \textrm{for } |i - j| \geq 2, \label{4}\\
\rho_i\rho_{i+1}\rho_i &=& \rho_{i+1}\rho_i\rho_{i+1}\hspace*{5mm} \textrm{for } i = 1, 2, \dots, n-2, \label{5}\\
\rho_is_j &=& s_j\rho_i \hspace*{5mm} \textrm{for } |i - j| \geq 2, \label{6}\\
\rho_i\rho_{i+1} s_i &=& s_{i+1} \rho_i \rho_{i+1}\hspace*{5mm} \textrm{for } i = 1, 2, \dots, n-2. \label{7}
\end{eqnarray}
Note that the twin group $T_n$ has a presentation
$$\big\langle s_1, s_2, \dots, s_{n-1}~|~ s_i^{2} = 1~ \text{for}~1 \le i \le n-1~\textrm{and}~ s_is_j = s_js_i~ \text{for}~ |i-j| \geq 2 \big\rangle.$$
Analogous to (virtual) braid groups \cite{KauffmanLambropoulou2004}, virtual twin groups have a nice geometrical interpretation (see \cite{NandaSingh2020} for more details). We consider a set $Q$ of fixed $n$ points on the real line $\mathbb{R}$. Then a \textit{virtual twin diagram} on $n$ strands is a subset $D$ of the strip $\mathbb{R} \times [0,1]$ consisting of $n$ intervals called {\it strands} such that the boundary of $D$ is $Q  \times \{0,1\}$ and the following conditions are satisfied:
\begin{enumerate}
\item the natural projection $\mathbb{R} \times [0,1] \to [0,1]$ maps each strand homeomorphically onto $[0,1]$, 
\item the set $V(D)$ of all crossings of the diagram $D$ consists of transverse double points of $D$ where each crossing has the pre-assigned information of being a real or a virtual crossing as depicted in Figure \ref{Crossings}. A virtual crossing is depicted by a crossing encircled with a small circle.
\end{enumerate}
\begin{figure}[hbtp]
\centering
\includegraphics[scale=0.4]{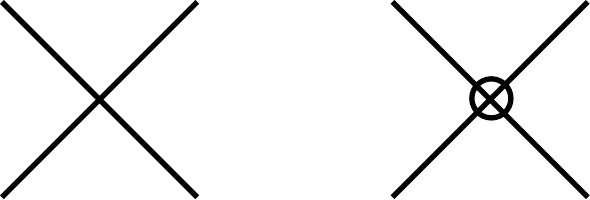}
\caption{Real and virtual crossings}
\label{Crossings}
\end{figure}

We say that two virtual twin diagrams $D_1$ and $D_2$ on $n$ strands are \textit{equivalent} if one can be obtained from the other by a finite sequence of planar Reidemeister moves as shown in Figure \ref{ReidemeisterMoves} and isotopies of the plane. 
\begin{figure}[hbtp]
\centering
\includegraphics[scale=0.35]{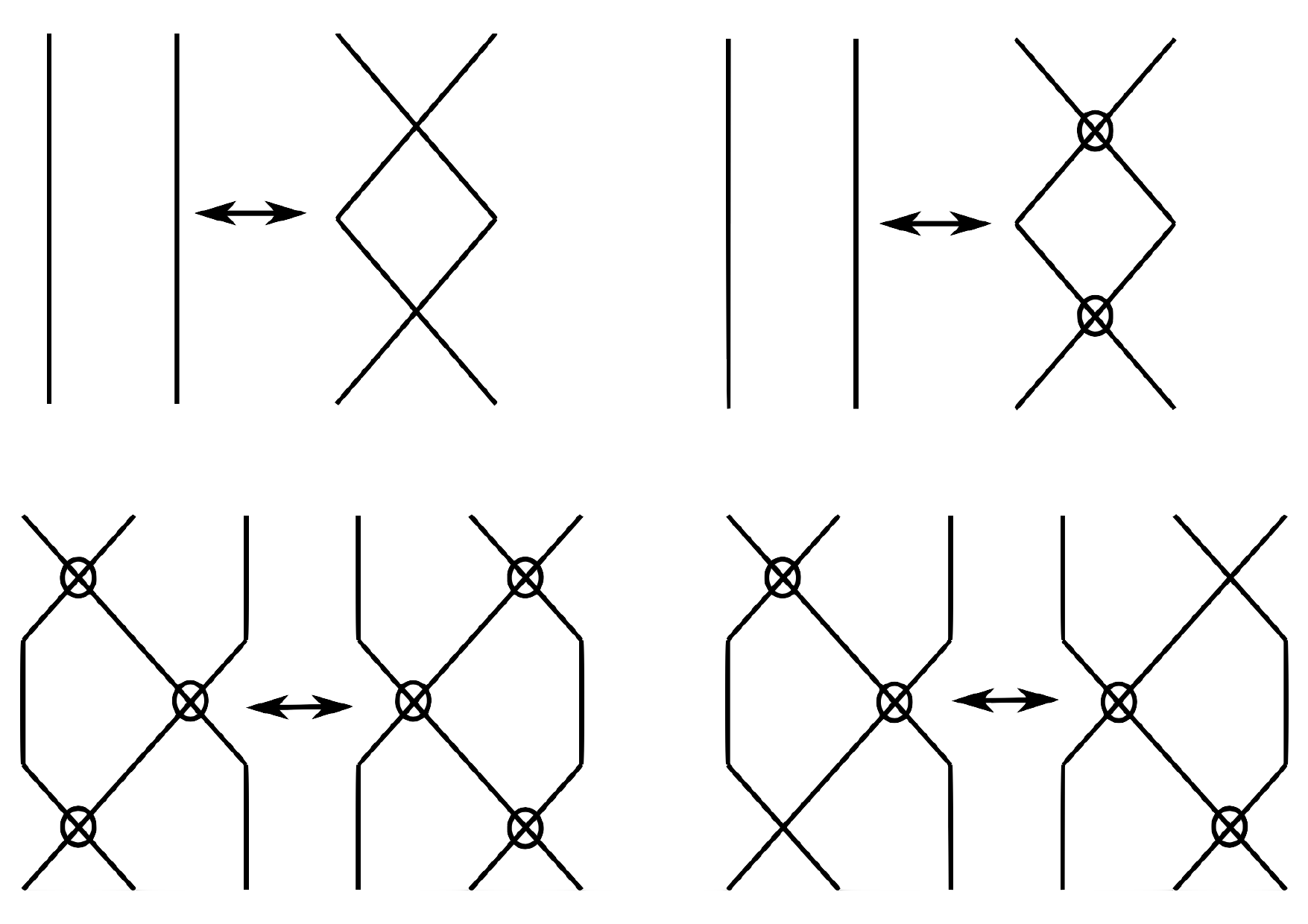}
\caption{Reidemeister moves for virtual twin diagrams}
\label{ReidemeisterMoves}
\end{figure}

A \textit{virtual twin} is then defined as an equivalence class of such virtual twin diagrams. The product $D_1D_2$ of two virtual twin diagrams $D_1$ and $D_2$ is defined by placing $D_1$ on top of $D_2$ and then shrinking the interval to $[0,1]$. It is clear that if  $D_1$ is equivalent to $D_1'$ and $D_2$ is equivalent to $D_2'$, then $D_1D_2$ is equivalent to $D_1'D_2'$. Thus, there is a well-defined binary operation on the set of all virtual twins on $n$ strands. It has been shown in \cite[Proposition 3.3]{NandaSingh2020} that the set of all virtual twins on $n$ strands is a group that is isomorphic to the abstractly defined group $VT_n$. The generators $s_i$ and $\rho_i$ of $VT_n$ can be geometrically represented as in Figure \ref{generator-vtn}.
\begin{figure}[hbtp]
\centering
\includegraphics[scale=0.3]{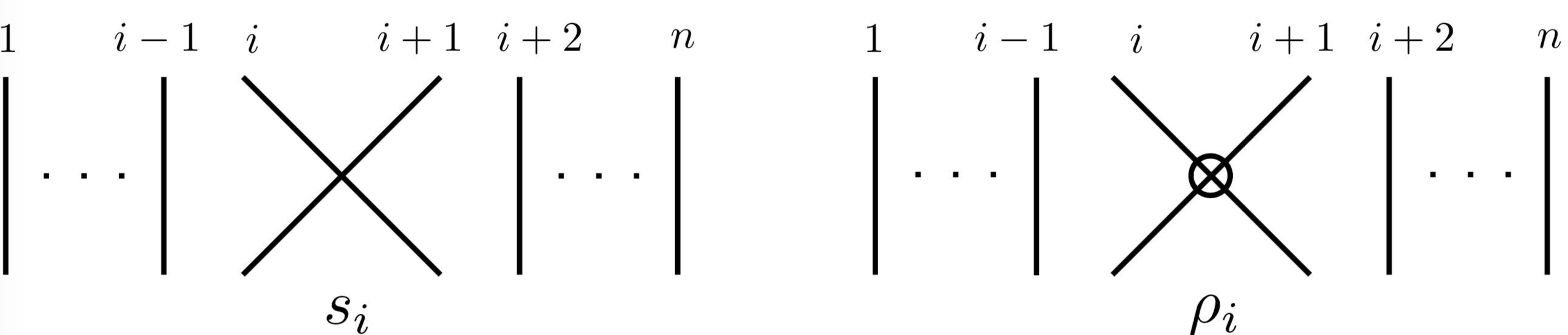}
\caption{Generator $s_i$ and $\rho_i$}
\label{generator-vtn}
\end{figure}

Note that $VT_2 \cong \mathbb{Z}_2 * \mathbb{Z}_2$, the infinite dihedral group. There is a natural surjection $\pi:VT_n  \to S_n$ given by
$$\pi(s_i) = \pi(\rho_i) = (i, i+1)$$
for all $1\leq i \leq n-1$. The kernel $PVT_n$ of this surjection is called the \textit{pure virtual twin group} on $n$ strands. The group $PVT_n$ is an analogue of the pure virtual braid group. The map $S_n \to VT_n$ given by $(i, i+1)\mapsto \rho_i$ is a splitting of the short exact sequence
$$1 \to PVT_n \to VT_n \to S_n \to 1,$$
and hence $VT_n= PVT_n \rtimes S_n$. Thus, we can identify the subgroup $\langle \rho_1, \rho_2, \ldots,\rho_{n-1}\rangle$ of $VT_n$ with the symmetric group $S_n$ on $n$ symbols. 
\par
We conclude this section by setting some notation. For elements $g, h$ of a group $G$, we denote the commutator $g^{-1}h^{-1}gh$ by $[ g, h ]$ and the element $h^{-1}gh$ by $g^h$.
\medskip

\section{Presentation of pure virtual twin group $PVT_n$}\label{section-presentation-pvtn}
In this section, we give a presentation of $PVT_n$. We show that the rank of $PVT_n$ is $n(n-1)/2$, which, interestingly, coincides with the rank of the pure braid group.
\medskip

We shall use the standard presentation of $VT_n$ and the Reidemeister-Schreier method \cite[Theorem 2.6]{Magnus1966}. For each $1 \leq k \leq n-1$, set 
\begin{equation*}
m_{k, i_k}:=
\begin{cases}
\rho_k \rho_{k-1}\dots \rho_{i_{k}+1} & \text{ for }  0 \leq i_k < k, \\
1 & \text{ for }    i_k = k, \\
\end{cases}
\end{equation*}
and 
$$\M_n := \big\{ m_{1, i_1}m_{2, i_2}\dots m_{n-1, i_{n-1}} ~| ~0 \leq i_k \leq k  \text{ for each } 1 \leq k \leq n-1   \big\}$$
as the Schreier system of coset representatives of $PVT_n$ in $VT_n$. For an element $w \in VT_n$, let $\overline{w}$ denote the unique coset representative of the coset of $w$ in the Schreier set $\M_n$. By Reidemeister-Schreier method, the group $PVT_n$ is generated by the set
$$\big\{ \gamma( \mu, a) = (\mu a) (\overline{\mu a})^{-1} ~|~ \mu \in \M_n \text{ and } a \in \{s_1, \dots, s_{n-1}, \rho_1, \dots, \rho_{n-1}\} \big\}$$
with defining relations
$$\big\{\tau(\mu r \mu^{-1}) \mid \mu \in \M_n~\text{and}~ r ~\text{is a defining relation in} ~VT_n \big\},$$
where $\tau$ is the rewriting process. More precisely, for an element $g=g_1g_2\dots g_k \in VT_n$, we have 
$$\tau(g) = \gamma(1, g_1)\gamma(\overline{g_1}, g_2)\dots \gamma(\overline{g_1g_2\dots g_{k-1}}, g_k).$$

We set
$$\lambda_{i, i+1}= s_i \rho_i$$
for each $1 \le i \le n-1$ and
$$\lambda_{i,j} = \rho_{j-1} \rho_{j-2} \dots \rho_{i+1} \lambda_{i, i+1} \rho_{i+1} \dots \rho_{j-2}  \rho_{j-1}$$
for each $1 \leq i < j \leq n$ and $j \ne i+1$. These notations will be used throughout this section.
\medskip

\begin{theorem}\label{PVTn-generators}
The pure virtual twin group $PVT_n$ on  $n \ge 2 $ strands is generated by $$\mathcal{S}= \big\{ \lambda_{i, j} ~|~  1 \leq i< j \leq n \big\}.$$
\end{theorem}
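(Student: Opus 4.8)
The plan is to obtain a generating set for $PVT_n$ by the Reidemeister--Schreier method applied to the transversal $\M_n$, and then to show that this set collapses to $\mathcal{S}$. First I would observe that, since every $m_{k,i_k}$ is a word in $\rho_1,\dots,\rho_{n-1}$, we have $\M_n\subseteq S_n=\langle\rho_1,\dots,\rho_{n-1}\rangle\leq VT_n$; as $|\M_n|=[VT_n:PVT_n]=|S_n|$, in fact $\M_n=S_n$, so $\overline{\mu\rho_k}$ is simply the element $\mu\rho_k\in S_n$ for every $\mu\in\M_n$, and hence the Reidemeister--Schreier generator $\gamma(\mu,\rho_k)=(\mu\rho_k)(\overline{\mu\rho_k})^{-1}$ is trivial. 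For $s_k$, the equality $\pi(s_k)=\pi(\rho_k)$ gives $PVT_n\,\mu s_k=PVT_n\,\mu\rho_k$ (because $\mu(s_k\rho_k)\mu^{-1}=\mu\lambda_{k,k+1}\mu^{-1}\in PVT_n$), so $\overline{\mu s_k}=\mu\rho_k$ and
\[
\gamma(\mu,s_k)=\mu s_k(\mu\rho_k)^{-1}=\mu\,s_k\rho_k\,\mu^{-1}=\mu\,\lambda_{k,k+1}\,\mu^{-1}.
\]
Thus $PVT_n=\big\langle\,\mu\,\lambda_{k,k+1}\,\mu^{-1}\mid\mu\in S_n,\ 1\le k\le n-1\,\big\rangle$.

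The second step is to reduce the theorem to a normality statement: it suffices to prove that $\langle\mathcal{S}\rangle$ is normalised by $S_n$. Granting this, each $\mu\lambda_{k,k+1}\mu^{-1}$ lies in $\langle\mathcal{S}\rangle$ (as $\lambda_{k,k+1}\in\mathcal{S}$), so $PVT_n\subseteq\langle\mathcal{S}\rangle$; the reverse inclusion is immediate since $\pi(\lambda_{i,i+1})=\rho_i^{2}=\id$ while $\ker\pi=PVT_n\trianglelefteq VT_n$, so every $\lambda_{i,j}$, being an $S_n$-conjugate of some $\lambda_{i,i+1}$, already lies in $PVT_n$. Since $S_n$ is generated by the involutions $\rho_1,\dots,\rho_{n-1}$, normality of $\langle\mathcal{S}\rangle$ follows once one checks that $\rho_l\,\lambda_{i,j}\,\rho_l\in\langle\mathcal{S}\rangle$ for all $l$ and all $1\le i<j\le n$.

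The last and main step is this conjugation computation, which I would carry out by a case analysis on the position of $l$ relative to $\{i,\dots,j\}$ using only the defining relations \eqref{4}, \eqref{5}, \eqref{6}, \eqref{7}; I expect to verify in every case that $\rho_l\,\lambda_{i,j}\,\rho_l=\lambda_{i',j'}^{\pm1}$, where $\{i',j'\}$ is the image of $\{i,j\}$ under the transposition $(l,l+1)$ (the sign being $-1$ precisely when $\{i,j\}=\{l,l+1\}$). Several cases are immediate: from the definition $\lambda_{i,j}=\rho_{j-1}\,\lambda_{i,j-1}\,\rho_{j-1}$ (for $i+1<j$) one reads off $\rho_j\lambda_{i,j}\rho_j=\lambda_{i,j+1}$ and $\rho_{j-1}\lambda_{i,j}\rho_{j-1}=\lambda_{i,j-1}$ at once; if $l\le i-2$ or $l\ge j+1$ then \eqref{4} and \eqref{6} show $\rho_l$ commutes with every letter of $\lambda_{i,j}=\rho_{j-1}\cdots\rho_{i+1}\,s_i\rho_i\,\rho_{i+1}\cdots\rho_{j-1}$; and if $i+1\le l\le j-2$ the same conclusion holds after using the braid relation \eqref{5} to slide $\rho_l$ through the telescoping word $\rho_{i+1}\cdots\rho_{j-1}$, where it turns into $\rho_{l+1}$ and then commutes with $s_i\rho_i$. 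The genuinely delicate cases are $l=i-1$ and $l=i$, where $\rho_l$ must be transported past the block $s_i\rho_i$: here one is forced to invoke the mixed relation \eqref{7} — the only relation entangling the $s$'s and the $\rho$'s — typically followed by one application of \eqref{5}, as already illustrated by $\rho_i\lambda_{i,i+1}\rho_i=\rho_i s_i\rho_i\rho_i=\rho_i s_i=\lambda_{i,i+1}^{-1}$. I expect these two cases, pivoting on relation \eqref{7}, to be the only real obstacle; everything else is bookkeeping with the braid and commutation relations, and once the conjugation formula is established the theorem follows by combining it with the reductions above.
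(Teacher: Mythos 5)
Your proposal is correct and follows essentially the same route as the paper: Reidemeister--Schreier with the transversal $\M_n$ (which you rightly identify with the copy of $S_n$ inside $VT_n$) reduces everything to the conjugation formula $\rho_l\,\lambda_{i,j}\,\rho_l=\lambda_{i',j'}^{\pm1}$, which the paper likewise establishes by the same case analysis, using relation \eqref{7} exactly in the two cases you flag as delicate. The only difference is one of emphasis: the paper additionally verifies the converse, that every $\lambda_{i,j}$ is itself a Reidemeister--Schreier generator $\gamma(\mu,s_1)$ (used later for the relations), whereas you only need, and correctly supply, the inclusion $\lambda_{i,j}\in\ker\pi$.
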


\begin{proof}
The case $n=2$ is immediate, and hence we assume $n \ge 3$. Recall that $PVT_n$ is generated by the elements $\gamma( \mu, a)$, where $\mu \in \M_n$ and $a \in \{s_1, \dots, s_{n-1}, \rho_1, \dots, \rho_{n-1}\}$.
We observe that 
\begin{eqnarray*}
\overline{\alpha} &=& \alpha,\\
\overline{ \alpha_1 s_{i_1} \dots \alpha_k s_{i_k}} &=& \alpha_1 \rho_{i_1} \dots \alpha_k \rho_{i_k} 
\end{eqnarray*}
  in $VT_n$ for elements $\alpha, \alpha_j$ in the subgroup $\langle \rho_1, \dots, \rho_{n-1}\rangle$. Therefore, we have 
$$\gamma( \mu, \rho_i)  = (\mu \rho_i) (\mu \rho_i)^{-1} = 1$$
and
$$\gamma( \mu, s_i) = (\mu s_i) (\mu \rho_i)^{-1} = \mu s_i \rho_i \mu^{-1} = \mu \lambda_{i, i+1} \mu^{-1}$$
for each  $\mu \in \M_n$ and $ i=1, 2, \dots, n-1$. Let $\mathcal{S} \sqcup \mathcal{S}^{-1}= \{\lambda_{i,j}^{\pm 1}~|~ \lambda_{i,j} \in \mathcal{S} \}$.  We claim that each $\gamma( \mu, s_i)$ lies in  $\mathcal{S} \sqcup \mathcal{S}^{-1}$. For this, we analyse the conjugation action of $S_n = \langle \rho_1, \ldots, \rho_{n-1} \rangle$ on the set $\mathcal{S}$.
\medskip

First consider $\lambda_{i,i+1}$ for $i=1,2, \dots, n-1$.
\begin{itemize}
\item[(i)] If $ 1 \leq k \leq i-2$ or $i+2 \leq k \leq n-1$, then
\begin{equation*}\label{stabliser12}
 \rho_k \lambda_{i,i+1} \rho_k = \lambda_{i,i+1}.
 \end{equation*}

\item[(ii)] If $ k= i-1$, then
\begin{align*}
 \rho_k \lambda_{i,i+1} \rho_k &= \rho_{i-1} \lambda_{i,i+1} \rho_{i-1}\\
 &= \rho_{i-1} s_i \rho_i \rho_{i-1} \\
&= \rho_{i-1} s_i \rho_{i-1}\rho_{i-1}\rho_i \rho_{i-1} \quad (\text{Using } \eqref{3}) \\		
&= \rho_{i} s_{i-1} \rho_{i}\rho_{i-1}\rho_i \rho_{i-1} \quad \quad (\text{Using } \eqref{7}) \\	
&= \rho_{i} s_{i-1} \rho_{i-1}\rho_{i}\rho_{i-1} \rho_{i-1} \quad (\text{Using } \eqref{5}) \\	
&= \rho_{i}(s_{i-1} \rho_{i-1})\rho_{i} \quad \quad \quad \quad (\text{Using } \eqref{3}) \\
&= \lambda_{i-1,i+1}.
 \end{align*}
 
\item[(iii)] If $k=i$, then
$$ \rho_k \lambda_{i,i+1} \rho_k = \lambda_{i,i+1}^{-1}. $$

\item[(iii)] If $k=i+1$, then
$$ \rho_k \lambda_{i,i+1} \rho_k = \lambda_{i,i+2}. $$
\end{itemize}
\medskip

Next, we consider $\lambda_{i,j}$ for each $1 \leq i < j \leq n$ and $j \ne i+1$. 
\begin{itemize}  

\item[(i)] If $1 \leq k \leq i-2$ or $j+1 \leq k \leq n-1$, then
$$\rho_k \lambda_{i,j}\rho_k = \lambda_{i,j}.$$

\item[(ii)] For $k=i-1$, we have $\rho_{i-1} \lambda_{i,j} \rho_{i-1} = \lambda_{i-1,j}$
since
\begin{align*}
\rho_{i-1} \lambda_{i,j} \rho_{i-1} &= \rho_{i-1} \rho_{j-1} \rho_{j-2} \dots \rho_{i+1} \lambda_{i, i+1} \rho_{i+1} \dots \rho_{j-2}  \rho_{j-1} \rho_{i-1} \\
&= \rho_{j-1} \rho_{j-2} \dots \rho_{i+1}  \rho_{i-1} \lambda_{i, i+1} \rho_{i-1} \rho_{i+1} \dots \rho_{j-2}  \rho_{j-1} \qquad \qquad \qquad \qquad (\text{Using } \eqref{4}) \\
&= \rho_{j-1} \rho_{j-2} \dots \rho_{i+1}  \rho_{i-1} s_i \rho_i \rho_{i-1} \rho_{i+1} \dots \rho_{j-2}  \rho_{j-1}\\
&= \rho_{j-1} \rho_{j-2} \dots \rho_{i+1} \rho_i \rho_i \rho_{i-1}  s_i \rho_i \rho_{i-1}\rho_{i+1} \dots \rho_{j-2}  \rho_{j-1} \qquad \qquad \qquad \quad (\text{Using } \eqref{3}) \\
&= \rho_{j-1} \rho_{j-2} \dots \rho_{i+1} \rho_i s_{i-1} \rho_i \rho_{i-1} \rho_i \rho_{i-1}\rho_{i+1} \dots \rho_{j-2}  \rho_{j-1} \qquad \qquad \qquad(\text{Using } \eqref{7}) \\
&= \rho_{j-1} \rho_{j-2} \dots \rho_{i+1} \rho_i s_{i-1} \rho_i \rho_{i} \rho_{i-1} \rho_{i}\rho_{i+1} \dots \rho_{j-2}  \rho_{j-1} \qquad \qquad \qquad \quad (\text{Using } \eqref{5}) \\
&= \rho_{j-1} \rho_{j-2} \dots \rho_{i+1} \rho_i (s_{i-1} \rho_{i-1}) \rho_{i}\rho_{i+1} \dots \rho_{j-2}  \rho_{j-1} \qquad \qquad \qquad \qquad  (\text{Using } \eqref{3}) \\
&= \lambda_{i-1,j}.
\end{align*}

\item[(iii)] For $k=i$, we have $\rho_{i} \lambda_{i,j} \rho_{i} = \lambda_{i+1,j},$
since
\begin{align*}
\rho_{i} \lambda_{i,j} \rho_{i} &= \rho_{i} \rho_{j-1} \rho_{j-2} \dots \rho_{i+1} \lambda_{i, i+1} \rho_{i+1} \dots \rho_{j-2}  \rho_{j-1} \rho_{i}\\
&= \rho_{j-1} \rho_{j-2} \dots \rho_i \rho_{i+1} \lambda_{i, i+1} \rho_{i+1} \rho_i \dots \rho_{j-2}  \rho_{j-1} \qquad \qquad \qquad \qquad \qquad \qquad (\text{Using } \eqref{4}) \\
&= \rho_{j-1} \rho_{j-2} \dots \rho_i \rho_{i+1} s_i \rho_i \rho_{i+1} \rho_i \dots \rho_{j-2}  \rho_{j-1}\\
&= \rho_{j-1} \rho_{j-2} \dots \rho_{i+2} s_{i+1} \rho_i \rho_{i+1} \rho_i \rho_{i+1} \rho_i \dots \rho_{j-2}  \rho_{j-1} \qquad \qquad \qquad \qquad \qquad (\text{Using } \eqref{7}) \\
&= \rho_{j-1} \rho_{j-2} \dots \rho_{i+2} s_{i+1} \rho_i \rho_{i} \rho_{i+1} \rho_{i} \rho_i \dots \rho_{j-2}  \rho_{j-1} \qquad \qquad \qquad \qquad \qquad \quad (\text{Using } \eqref{5}) \\
&= \rho_{j-1} \rho_{j-2} \dots \rho_{i+2}( s_{i+1} \rho_{i+1}) \rho_{i+2} \dots \rho_{j-2}  \rho_{j-1}\qquad \qquad \qquad \qquad \qquad \quad \enspace (\text{Using } \eqref{3}) \\
&= \lambda_{i+1,j}.
\end{align*}

\item[(iv)] If $ i+1 \leq k \leq j-2$, then 
\begin{align*}
\rho_k \lambda_{i,j} \rho_k &= \rho_k \rho_{j-1} \dots \rho_{k+1} \rho_{k} \dots \rho_{i+1} \lambda_{i, i+1} \rho_{i+1} \dots \rho_k \rho_{k+1} \dots \rho_{j-1} \rho_k \\
&= \rho_{j-1} \dots \rho_k \rho_{k+1} \rho_{k} \dots \rho_{i+1} \lambda_{i, i+1} \rho_{i+1} \dots \rho_k \rho_{k+1} \rho_k  \dots \rho_{j-1}  \qquad \qquad \qquad \quad (\text{Using } \eqref{4}) \\
&= \rho_{j-1} \dots \rho_{k+1} \rho_{k} \rho_{k+1} \rho_{k-1} \dots \rho_{i+1} \lambda_{i, i+1} \rho_{i+1} \dots \rho_{k-1} \rho_{k+1} \rho_{k} \rho_{k+1}  \dots \rho_{j-1} \quad (\text{Using } \eqref{5}) \\
&= \rho_{j-1} \dots \rho_{k+1} \rho_{k} \rho_{k-1} \dots \rho_{i+1} \lambda_{i, i+1} \rho_{i+1} \dots \rho_{k-1}\rho_{k} \rho_{k+1}  \dots \rho_{j-1} \qquad \qquad\quad (\text{Using } \eqref{4}) \\
&= \lambda_{i,j}.
\end{align*}

\item[(v)] If $k= j-1$, then 
$$\rho_k \lambda_{i,j}\rho_k = \lambda_{i,j-1}.$$

\item[(vi)] If $k= j$, then 
$$\rho_k \lambda_{i,j}\rho_k = \lambda_{i,j+1}.$$

\end{itemize}

Hence, each generator $\gamma( \mu, s_i)$ lies in the set $\mathcal{S} \sqcup \mathcal{S}^{-1}$. Conversely, if $\lambda_{i,j} \in \mathcal{S}$ is an arbitrary element, then we see that conjugation by $(\rho_{i-1}\rho_{i-2}\ldots \rho_2\rho_1)(\rho_{j-1}\rho_{j-2}\ldots \rho_3\rho_2)$ maps 
$\lambda_{1,2}$ to $\lambda_{i,j}$, whereas conjugation by $(\rho_{i-1}\rho_{i-2}\ldots \rho_2\rho_1)(\rho_{j-1}\rho_{j-2}\ldots \rho_3\rho_2\rho_1)$ maps $\lambda_{1,2}$ to $\lambda_{i,j}^{-1}$. That is, each $\lambda_{i,j}= \mu \lambda_{1, 2} \mu^{-1}=\gamma(\mu, s_1)$ for some $\mu \in \M_n$, and hence $\mathcal{S}$ generates the group $PVT_n$.
\end{proof}

\begin{remark}\label{rephrasing-action}
We can summarise the action of $S_n$ on the set  $\mathcal{S} \sqcup \mathcal{S}^{-1}$ as
\begin{equation*}
\rho_i :
\begin{cases}
\lambda_{i, i+1} \longleftrightarrow \lambda_{i, i+1}^{-1}, & \\
\lambda_{i, j} \longleftrightarrow \lambda_{i+1, j} & \text{for all}\;\; i+2 \leq j \leq n,\\
\lambda_{j, i} \longleftrightarrow \lambda_{j, i+1} & \text{for all}\;\; 1\leq j < i,\\
\lambda_{k, l} \longleftrightarrow \lambda_{k, l} & \text{otherwise, i.e.,}\;\; k \geq i+2, \text{or}\;\; k < i \;\text{and}\; i\neq l \neq i+1.
\end{cases}
\end{equation*}
The action can be further simplified as
\begin{equation*}
\rho_i :
\begin{cases}
\lambda_{i, i+1} \longleftrightarrow \lambda_{i, i+1}^{-1}, & \\
\lambda_{k, l} \longleftrightarrow \lambda_{k', l'} & \text{for all}\;\; (k, l) \neq (i, i+1),
\end{cases}
\end{equation*} 
where the transposition $(k', l')$ equals $(i, i+1) (k, l) (i, i+1)$ and $k' < l'$. As seen in the proof of Theorem \ref{PVTn-generators}, the action of $S_n$ on the set $\mathcal{S} \sqcup \mathcal{S}^{-1}$ is transitive. 
\end{remark}

\begin{theorem}\label{pvtn-right-angled-artin}
The pure virtual twin group $PVT_n$ on  $n \ge 2 $ strands has the presentation
$$\big\langle \lambda_{i,j},~1 \leq i < j \leq n ~|~ \lambda_{i,j} \lambda_{k,l} =  \lambda_{k,l} \lambda_{i,j} \text{ for distinct integers } i, j, k, l \big\rangle.$$
\end{theorem}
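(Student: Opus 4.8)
The plan is to apply the Reidemeister–Schreier method with the same Schreier system $\M_n$ used in Theorem \ref{PVTn-generators}, and simply to compute the rewritten images $\tau(\mu r \mu^{-1})$ of all conjugates of the defining relations \eqref{1}–\eqref{7} of $VT_n$, discarding those that are trivial and checking that the rest all have the claimed commutator form. From Theorem \ref{PVTn-generators} we already know that the generating set $\mathcal{S} \sqcup \mathcal{S}^{-1}$ suffices and that $\gamma(\mu,\rho_i)=1$ while $\gamma(\mu,s_i)=\mu\lambda_{i,i+1}\mu^{-1}$, and Remark \ref{rephrasing-action} records exactly how $S_n$ permutes $\mathcal{S}\sqcup\mathcal{S}^{-1}$ under conjugation. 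So the whole computation reduces to understanding how $\tau$ behaves on a Schreier-transversal word times a relator times its inverse.

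First I would dispose of the relators that live entirely in $\langle \rho_1,\dots,\rho_{n-1}\rangle\cong S_n$, namely \eqref{3}, \eqref{4}, \eqref{5}: since $\gamma(\mu,\rho_i)=1$ for every $\mu\in\M_n$ and $\tau$ is multiplicative along a word, every conjugate of such a relator rewrites to the empty word, contributing nothing. Next I would handle \eqref{1}, $s_i^2=1$: for $\mu\in\M_n$, $\tau(\mu s_i^2\mu^{-1})$ unwinds to $\gamma(\mu,s_i)\,\gamma(\overline{\mu s_i},s_i)$, and since $\overline{\mu s_i}=\overline{\mu \rho_i}=\mu\rho_i$ one gets $\big(\mu\lambda_{i,i+1}\mu^{-1}\big)\big(\mu\rho_i\lambda_{i,i+1}\rho_i\mu^{-1}\big)$; by case (iii) of Theorem \ref{PVTn-generators}, $\rho_i\lambda_{i,i+1}\rho_i=\lambda_{i,i+1}^{-1}$, so this product is trivial — again no relation. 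The remaining families \eqref{2}, \eqref{6}, \eqref{7} are the ones that produce actual relations. For \eqref{2}, $s_is_j=s_js_i$ with $|i-j|\ge 2$: rewriting $\tau\big(\mu\, s_is_js_is_j\,\mu^{-1}\big)$ (using $s_is_js_is_j=1$) yields a product of four $\gamma$'s which, after using $\overline{\mu s_i}=\mu\rho_i$ etc., becomes $\mu\lambda_{i,i+1}\mu^{-1}\cdot\mu\rho_i\lambda_{j,j+1}\rho_i\mu^{-1}\cdot\mu\rho_i\rho_j\lambda_{i,i+1}\rho_j\rho_i\mu^{-1}\cdot\mu\rho_i\rho_j\rho_i\lambda_{j,j+1}\rho_i\rho_j\rho_i\mu^{-1}$; since $|i-j|\ge 2$ the $\rho$'s act trivially on the relevant $\lambda$'s (case (i) in both lists), and this collapses to $\mu[\lambda_{i,i+1},\lambda_{j,j+1}]\mu^{-1}$ being trivial, i.e.\ to the relation $\lambda_{i,i+1}^{\,\mu}$ commutes with $\lambda_{j,j+1}^{\,\mu}$. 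Conjugating over all $\mu\in\M_n$ and invoking the transitivity of the $S_n$-action from Remark \ref{rephrasing-action}, these are precisely the relations $[\lambda_{a,b},\lambda_{c,d}]=1$ for $\{a,b\}\cap\{c,d\}=\emptyset$. The mixed relators \eqref{6} and \eqref{7} must be checked to yield nothing new: \eqref{6} with $|i-j|\ge 2$ gives, after rewriting $\rho_i s_j\rho_i s_j=1$, a two-term product $\mu\lambda_{j,j+1}\mu^{-1}\cdot\mu\rho_i\lambda_{j,j+1}^{-1}\rho_i\mu^{-1}$ which is trivial because $\rho_i$ fixes $\lambda_{j,j+1}$; and \eqref{7}, $\rho_i\rho_{i+1}s_i=s_{i+1}\rho_i\rho_{i+1}$, rewrites (using the relator $\rho_i\rho_{i+1}s_i\rho_{i+1}\rho_i s_{i+1}=1$ together with $\gamma(\nu,\rho_k)=1$) to $\mu\rho_i\rho_{i+1}\lambda_{i,i+1}\rho_{i+1}\rho_i\mu^{-1}\cdot\mu\rho_i\rho_{i+1}\rho_i\rho_{i+1}\lambda_{i+1,i+2}^{-1}\rho_{i+1}\rho_i\rho_{i+1}\rho_i\mu^{-1}$, and one computes from cases (ii)–(iv) that $\rho_i\rho_{i+1}\lambda_{i,i+1}\rho_{i+1}\rho_i=\lambda_{i+1,i+2}$ and $\rho_i\rho_{i+1}\rho_i\rho_{i+1}=\rho_{i+1}\rho_i$ acts on $\lambda_{i+1,i+2}$ by inversion, so again the contribution is trivial.

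Assembling: the Reidemeister–Schreier presentation of $PVT_n$ has generators $\mathcal{S}\sqcup\mathcal{S}^{-1}$, which Theorem \ref{PVTn-generators} lets us cut down to $\mathcal{S}=\{\lambda_{i,j}\}$, and relations exactly the $S_n$-orbit of $[\lambda_{i,i+1},\lambda_{j,j+1}]=1$ ($|i-j|\ge 2$), which by the transitivity in Remark \ref{rephrasing-action} is the full set $\{[\lambda_{i,j},\lambda_{k,l}]=1 : i,j,k,l \text{ distinct}\}$. This is the asserted presentation. The main obstacle I anticipate is purely bookkeeping: one must be careful that the rewriting process $\tau$ is applied to the \emph{relator} word (the relation written as $r=1$, e.g.\ $s_is_js_is_j$), not to the two sides separately, and that at each step the intermediate coset representative $\overline{g_1\cdots g_k}$ is computed correctly via the rule $\overline{\alpha_1 s_{i_1}\cdots\alpha_k s_{i_k}}=\alpha_1\rho_{i_1}\cdots\alpha_k\rho_{i_k}$; once that is set up, every individual conjugate reduces by the conjugation formulas already established to either the empty word or a single commutator relation, and the only real content is verifying that the commutators obtained, as $\mu$ ranges over $\M_n$, exhaust (and do not exceed) the distinct-index commutators — which is exactly where transitivity of the $S_n$-action is used. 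One should also double-check that no relation of the form $[\lambda_{i,j},\lambda_{k,l}]$ with $\{i,j\}\cap\{k,l\}\neq\emptyset$ (i.e.\ sharing one index) ever appears, since such relations would make the group smaller than a right-angled Artin group; the case analysis of \eqref{2}, \eqref{6}, \eqref{7} above shows they do not.
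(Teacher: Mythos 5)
Your proposal follows essentially the same route as the paper: the same Reidemeister--Schreier setup with the Schreier system $\M_n$, the same case-by-case disposal of relators \eqref{1}, \eqref{3}--\eqref{7} as trivial, and the same identification of the relators \eqref{2} as the source of the commuting relations. The one place where your justification is thinner than the paper's is the final step: to conclude that the relations $[\mu\lambda_{i,i+1}\mu^{-1},\mu\lambda_{j,j+1}\mu^{-1}]=1$ exhaust \emph{all} pairs $[\lambda_{a,b},\lambda_{c,d}]$ with $\{a,b\}\cap\{c,d\}=\emptyset$, you need transitivity of the \emph{diagonal} $S_n$-action on ordered pairs of disjoint-index generators, not merely the single-generator transitivity recorded in Remark \ref{rephrasing-action}; the paper supplies this by counting the stabiliser of $(\lambda_{1,2},\lambda_{n-1,n})$, and you correctly flag this as the crux but should make the pair-transitivity argument explicit.
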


\begin{proof}
Theorem \ref{PVTn-generators} already gives a generating set $\mathcal{S}$ for $PVT_n$. Geometrically, a generator $\lambda_{i,j}$ looks as in Figure \ref{fig-generator-pvtn}.
\begin{figure}[hbtp]
\centering
\includegraphics[scale=1]{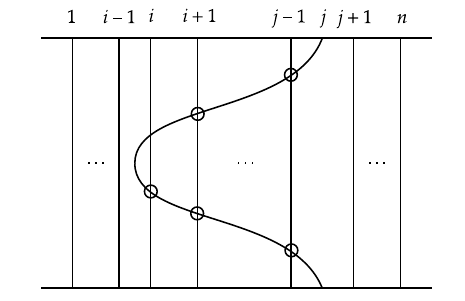}
\caption{The generator $\lambda_{i,j}$}
\label{fig-generator-pvtn}
\end{figure}

The defining relations are given by 
$$\tau(\mu r \mu^{-1}),$$
where $\tau$ is the rewriting process, $\mu \in \M_n$ and $r$ is a defining relation in $VT_n$. Let us take $$ \mu = \rho_{i_1} \rho_{i_2} \dots \rho_{i_k} \in \M_n.$$ 
Note that, since $\gamma(\mu, \rho_i)=1$ for all $i$, no non-trivial relations for $PVT_n$ can be obtained from the relations \eqref{3}--\eqref{5} of $VT_n$. We consider the remaining relations one by one. 
\begin{itemize}
\item[(i)] First consider the relations $s_i^2 = 1$, $1 \le i \le n-1$, of $VT_n$. Then we have
\begin{align*}
\tau(\mu s_i^2 \mu^{-1}) &= \gamma(\rho_{i_1} \dots \rho_{i_k} s_i s_i \rho_{i_k} \dots \rho_{i_1})\\
&= \gamma(1, \rho_{i_1}) \gamma(\overline{\rho_{i_1}}, \rho_{i_2}) \dots \gamma(\overline{\mu}, s_i) \gamma(\overline{\mu s_i}, s_i) \dots \gamma(\overline{\mu s_i s_i \rho_{i_k} \dots \rho_{i_2} }, \rho_{i_1})\\
&= \gamma(\overline{\mu}, s_i) \gamma(\overline{\mu s_i}, s_i)\\
&= \gamma(\mu, s_i) \gamma(\mu \rho_i, s_i)\\
&= (\mu s_i \rho_i \mu^{-1}) (\mu \rho_i s_i \mu^{-1})\\
&= (\mu s_i \rho_i \mu^{-1}) (\mu s_i \rho_i \mu^{-1})^{-1},
\end{align*}
which does not yield any non-trivial relation in $PVT_n$.
\item[]

\item[(ii)] Next we consider the relations $ (s_i \rho_j)^2 = 1$ for $|i-j| \geq 2$. Then we have
\begin{align*}
\tau(\mu s_i \rho_j s_i \rho_j \mu^{-1}) &= \gamma(\overline{\mu}, s_i) \gamma(\overline{\mu s_i}, \rho_j)  \gamma(\overline{\mu s_i \rho_j}, s_i)  \gamma(\overline{ \mu s_i \rho_j s_i}, \rho_j)\\
&= \gamma(\overline{\mu}, s_i) \gamma(\overline{\mu s_i \rho_j}, s_i)\\
&= \gamma(\mu, s_i) \gamma(\mu \rho_i \rho_j, s_i)\\
&= (\mu s_i \rho_i \mu^{-1}) (\mu \rho_i \rho_j s_i \rho_i \rho_j \rho_i \mu^{-1})\\
&= (\mu s_i \rho_i \mu^{-1}) (\mu \rho_i s_i \mu^{-1})\\
&= (\mu s_i \rho_i \mu^{-1})(\mu s_i \rho_i \mu^{-1})^{-1},
\end{align*}
which gives a trivial relation in $PVT_n$.
\item[]
\item[(iii)] Now we consider the relations $\rho_i s_{i+1} \rho_i \rho_{i+1} s_i \rho_{i+1}=1$, where $1 \le i\le n-2$. Computing
\begin{align*}
\tau(\mu \rho_i s_{i+1} \rho_i \rho_{i+1} s_i \rho_{i+1} \mu^{-1})&= \gamma(\overline{\mu \rho_i}, s_{i+1}) \gamma(\overline{ \mu \rho_i s_{i+1} \rho_i \rho_{i+1}}, s_i)\\
&= \gamma(\mu \rho_i, s_{i+1}) \gamma(\mu \rho_i \rho_{i+1} \rho_i \rho_{i+1}, s_i)\\
&= (\mu \rho_i s_{i+1} \rho_{i+1} \rho_i \mu^{-1}) (\mu \rho_{i+1} \rho_i s_i \rho_i \rho_i \rho_{i+1} \mu^{-1})\\
&= (\mu \rho_i s_{i+1} \rho_{i+1} \rho_i\mu^{-1}) (\mu \rho_{i+1} \rho_i s_i \rho_{i+1} \mu^{-1})\\
&= (\mu \rho_i s_{i+1} \rho_{i+1} \rho_i\mu^{-1}) (\mu \rho_{i+1} \rho_i  \rho_{i+1}  \rho_{i+1} s_i \rho_{i+1} \mu^{-1}) \qquad (\text{Using } \eqref{3}) \\
&= (\mu \rho_i s_{i+1} \rho_{i+1} \rho_i\mu^{-1}) (\mu \rho_{i+1} \rho_i  \rho_{i+1}  \rho_{i} s_{i+1} \rho_{i} \mu^{-1}) \qquad \quad (\text{Using } \eqref{7}) \\
&= (\mu \rho_i s_{i+1} \rho_{i+1} \rho_i\mu^{-1}) (\mu \rho_{i+1} \rho_{i+1}  \rho_{i}  \rho_{i+1} s_{i+1} \rho_{i} \mu^{-1}) \qquad (\text{Using } \eqref{7}) \\
&= (\mu \rho_i s_{i+1} \rho_{i+1} \rho_i\mu^{-1}) (\mu \rho_{i}  \rho_{i+1} s_{i+1} \rho_{i} \mu^{-1}) \qquad \qquad \quad \enspace (\text{Using } \eqref{3}) \\
&=(\mu \rho_i s_{i+1} \rho_{i+1} \rho_i\mu^{-1})(\mu \rho_i s_{i+1} \rho_{i+1} \rho_i\mu^{-1})^{-1}
\end{align*}
gives only trivial relations in $PVT_n$. 
\item[]
\item[(iv)] Finally we consider the relations $ (s_i s_j)^2 = 1$ for $|i-j| \geq 2$. If $\mu = 1$, then we get
\begin{align*}
\tau(s_i s_j s_i s_j) &= \gamma(1, s_i) \gamma(\overline{s_i}, s_j)  \gamma(\overline{s_i s_j}, s_i)  \gamma(\overline{s_i s_j s_i}, s_j)\\
&= \gamma(1, s_i) \gamma(\rho_i, s_j)  \gamma(\rho_i \rho_j, s_i)  \gamma(\rho_i \rho_j \rho_i, s_j)\\
&= (s_i \rho_i) (s_j \rho_j) (\rho_i s_i) ( \rho_j s_j)\\
&= \lambda_{i, i+1} \lambda_{j, j+1} \lambda_{i, i+1}^{-1} \lambda_{j, j+1}^{-1}.
\end{align*}
For $\mu \neq 1$, we have
\begin{align}
\nonumber \tau(\mu s_i s_j s_i s_j \mu^{-1}) &= \gamma(\overline{\mu}, s_i) \gamma(\overline{\mu s_i}, s_j)  \gamma(\overline{\mu s_i s_j}, s_i)  \gamma(\overline{ \mu s_i s_j s_i}, s_j)\\
\nonumber &= \gamma(\mu, s_i) \gamma(\mu \rho_i, s_j)  \gamma(\mu \rho_i \rho_j, s_i)  \gamma(\mu \rho_i \rho_j \rho_i, s_j)\\
\nonumber &= (\mu s_i \rho_i \mu^{-1})(\mu s_j \rho_j \mu^{-1}) (\mu \rho_i s_i \mu^{-1}) (\mu \rho_j s_j \mu^{-1})\\
\nonumber &= (\mu s_i \rho_i \mu^{-1})(\mu s_j \rho_j \mu^{-1}) (\mu s_i \rho_i \mu^{-1})^{-1}(\mu s_j \rho_j \mu^{-1})^{-1}\\
&= (\mu \lambda_{i, i+1} \mu^{-1})(\mu \lambda_{j, j+1} \mu^{-1})(\mu \lambda_{i, i+1} \mu^{-1})^{-1} (\mu \lambda_{j, j+1} \mu^{-1})^{-1}.\label{commuting-relations}
\end{align}
\end{itemize}

For $n \ge 4$, we set 
$$\mathcal{T} = \big\{(\lambda_{i,j}^{\epsilon}, \lambda_{k,l}^{\epsilon'})~|~i, j, k, l~\textrm{are distinct integers with}~1\leq i < j\leq n, ~1\leq k < l\leq n~\textrm{and}~\epsilon, \epsilon'\in \{1, -1\} \big\}.$$
If $\rho \in S_n$ and $(\lambda_{i,j}, \lambda_{k,l}) \in \mathcal{T}$, then Remark \ref{rephrasing-action} implies that $ \rho\lambda_{i,j}\rho^{-1} = \lambda_{i',j'}^{\epsilon}$ and $\rho\lambda_{k,l}\rho^{-1} = \lambda_{k',l'}^{\epsilon'}$ for some $\epsilon, \epsilon'\in \{1, -1\}$ and distinct integers $i',j',k', l'$ with $1\leq i' < j'\leq n$ and $1\leq k' < l'\leq n$. Thus, there is an induced diagonal action of $S_n$ on $\mathcal{T}$ given as $$ \rho \cdot (\lambda_{i,j}^{\epsilon},\; \lambda_{k,l}^{\epsilon'}) =  (\rho\lambda_{i,j}^{\epsilon}\rho^{-1},\; \rho\lambda_{k,l}^{\epsilon'}\rho^{-1}).$$ 
We claim that this action is transitive. Since $| \mathcal{T} | =n(n-1)(n-2)(n-3)$, it is enough to show that the stabiliser of some element of $\mathcal{T}$ has $(n-4)!$ elements. In fact, the stabiliser of the element $(\lambda_{1,2},\; \lambda_{n-1,n})$ equals $\langle \rho_3, \rho_4, \ldots, \rho_{n-1}\rangle \cap \langle \rho_1, \rho_2, \ldots, \rho_{n-3}\rangle$ = $\langle \rho_3, \rho_4, \ldots, \rho_{n-3}\rangle \cong S_{n-4}$, which is of order $(n-4)!$. Thus, the action is transitive and the defining relations of $PVT_n$ obtained from \eqref{commuting-relations} are precisely of the form $$\lambda_{i, j}\lambda_{k, l}= \lambda_{k, l}\lambda_{i, j}$$ for distinct integers $i, j, k, l$ with $1\leq i < j\leq n$ and $1\leq k < l\leq n$. This completes the proof of the theorem.
\end{proof}

Recall that, a group is called a {\it right-angled Artin group} if it has a presentation in which the only relations are the commuting relations among the generators. 
One can associate a graph to each right-angled Artin group whose vertices are the Artin generators and there is an edge between two vertices if and only if they commute. We shall discuss this in detail in Section \ref{section-auto-groupo-pvtn}. It is known that a right-angled Artin group is irreducible if and only if its defining graph does not split as a non-trivial simplicial join (see, for example, \cite[Proposition 2.14 and Corollary 2.15]{Behrstock} and \cite[Lemma 5.1]{Koberda}

\begin{corollary}\label{irred-raag}
The pure virtual twin group $PVT_n$ is an irreducible right-angled Artin group for each $n \geq 2$.
\end{corollary}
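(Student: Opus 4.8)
The plan is to read the defining graph off Theorem~\ref{pvtn-right-angled-artin} and then apply the standard dictionary between direct product decompositions of a right-angled Artin group and join decompositions of its underlying graph.

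First, Theorem~\ref{pvtn-right-angled-artin} presents $PVT_n$ as the right-angled Artin group $A_\Gamma$ on the graph $\Gamma = \Gamma_n$ with vertex set $\{\,\{i,j\} : 1 \le i < j \le n\,\}$, in which two distinct vertices $\{i,j\}$ and $\{k,l\}$ span an edge exactly when $\{i,j\} \cap \{k,l\} = \varnothing$; thus $\Gamma_n$ is the Kneser graph on the $2$-element subsets of $\{1,\dots,n\}$. In particular $PVT_n$ is a right-angled Artin group straight from the definition recalled above, so the only remaining point is irreducibility: that $PVT_n$ has no decomposition as an internal direct product $H \times K$ with $H$ and $K$ non-trivial.

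For this I would invoke the well-known fact that a right-angled Artin group $A_\Gamma$ is a direct product of two non-trivial subgroups if and only if $\Gamma$ is a non-trivial join $\Gamma = \Gamma_1 \ast \Gamma_2$ --- meaning $V(\Gamma) = V(\Gamma_1) \sqcup V(\Gamma_2)$ with both parts non-empty and every vertex of $\Gamma_1$ adjacent to every vertex of $\Gamma_2$ --- equivalently, if and only if the complement graph $\overline{\Gamma}$ is disconnected; in that case the factors may be taken to be the standard parabolic subgroups, isomorphic to $A_{\Gamma_1}$ and $A_{\Gamma_2}$. The implication with content here --- that an abstract splitting $A_\Gamma = H \times K$ forces such a partition of the vertex set --- is a standard structural result about right-angled Artin groups, and it is the step I would cite rather than reprove; this is the main obstacle in the argument. (For small $n$ one can bypass it, since $PVT_2 \cong \mathbb{Z}$, $PVT_3 \cong F_3$, and $PVT_4 \cong \mathbb{Z}^2 \ast \mathbb{Z}^2 \ast \mathbb{Z}^2$ are visibly directly indecomposable, but for $n \ge 5$ essentially the same input is needed.)

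Granting the correspondence, the corollary reduces to the purely combinatorial claim that $\Gamma_n$ is never a non-trivial join, i.e.\ that $\overline{\Gamma_n}$ is connected for all $n \ge 2$. When $n = 2$ the graph $\Gamma_2$ is a single vertex and there is nothing to prove. For $n \ge 3$, two distinct $2$-subsets of $\{1,\dots,n\}$ are adjacent in $\overline{\Gamma_n}$ precisely when they meet in one element, so $\overline{\Gamma_n}$ is the line graph of the complete graph $K_n$; and any two of its vertices $\{i,j\}$ and $\{k,l\}$ are joined by a path in $\overline{\Gamma_n}$ --- directly if $\{i,j\} \cap \{k,l\} \ne \varnothing$, and through the intermediate vertex $\{i,k\}$ (the indices $i,j,k,l$ being distinct in that case) if $\{i,j\} \cap \{k,l\} = \varnothing$ --- so $\overline{\Gamma_n}$ is connected. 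Hence $\Gamma_n$ is not a non-trivial join, and by the correspondence $PVT_n$ is not a direct product of two non-trivial subgroups. Together with the second paragraph, this proves that $PVT_n$ is an irreducible right-angled Artin group for every $n \ge 2$.
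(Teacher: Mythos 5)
Your proof is correct, and its combinatorial core is the same as the paper's: any two generators are linked by a chain of non-commuting pairs, i.e.\ the complement of the defining graph of Theorem~\ref{pvtn-right-angled-artin} is connected. The difference is where the group-theoretic weight sits. The paper argues directly with elements: it supposes $PVT_n = A \times B$, places $\lambda_{1,2}$ in $A$, and propagates membership in $A$ along non-commuting pairs until $B$ is forced to be trivial. You instead invoke the standard theorem that an abstract direct-product decomposition of a right-angled Artin group forces a join decomposition of its graph, and you are right to single this out as the genuinely non-trivial input --- it is precisely what is needed to justify the paper's tacit assumptions that a generator may be placed inside one factor and that non-commutation with an element of $A$ forces membership in $A$ (rather than merely a non-trivial $A$-component). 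So your version is the more carefully grounded one, at the price of an external citation; both then reduce to the same connectivity check, which you carry out correctly (the complement is the line graph of $K_n$, connected for $n \ge 3$, with the small cases read off from $PVT_2 \cong \mathbb{Z}$ and $PVT_3 \cong F_3$).
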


\begin{proof}
The group $PVT_n$ is a right-angled Artin group follows from Theorem \ref{pvtn-right-angled-artin}, and irreducibility of $PVT_n$ follows from the fact that the complement of its defining graph is connected.
\end{proof}

\begin{corollary}\label{vtn-residually-finite}
The virtual twin group $VT_n$ is residually finite and Hopfian for each $ n\geq 2$.
\end{corollary}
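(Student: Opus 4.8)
The plan is to combine Corollary~\ref{irred-raag} with the splitting $VT_n = PVT_n \rtimes S_n$ recorded in Section~\ref{section-prelim}, reducing everything to residual finiteness of $PVT_n$. First I would recall the classical fact that every right-angled Artin group is residually finite: by the Davis--Januszkiewicz construction a RAAG embeds as a finite-index subgroup of a right-angled Coxeter group, which is linear over $\mathbb{Z}$ via the Tits representation and hence residually finite by Mal'cev's theorem, and residual finiteness is inherited by subgroups. Applying this to $PVT_n$, which is a RAAG by Corollary~\ref{irred-raag}, shows that $PVT_n$ is residually finite.

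Next I would promote this property to $VT_n$ using that $PVT_n = \Ker(\pi)$ is normal of finite index $n!$ in $VT_n$. Concretely, take $1 \ne g \in VT_n$. If $g \notin PVT_n$, then $\pi$ already maps $g$ to a nontrivial element of the finite group $S_n$. If instead $g \in PVT_n \setminus \{1\}$, choose a finite-index subgroup $K \le PVT_n$ with $g \notin K$ and set $\widehat{K} = \bigcap_{x \in VT_n} xKx^{-1}$, the normal core of $K$ in $VT_n$. Since $[VT_n : PVT_n]$ is finite, $\widehat{K}$ is a finite intersection of conjugates of $K$, hence of finite index in $VT_n$; it is normal by construction, and since $K$ itself occurs among the conjugates, $g \notin \widehat{K}$. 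Thus $VT_n/\widehat{K}$ is a finite quotient separating $g$ from the identity, and $VT_n$ is residually finite.

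Finally, the Hopfian conclusion is immediate from Mal'cev's theorem that every finitely generated residually finite group is Hopfian, together with the finite presentation of $VT_n$ by the generators $s_i, \rho_i$ and the relations~\eqref{1}--\eqref{7}. The whole argument is short; the only step requiring a moment of care is the passage from residual finiteness of the finite-index subgroup $PVT_n$ to that of the overgroup $VT_n$ (the nontrivial direction of stability under finite extensions), which is precisely what the normal-core trick above handles.
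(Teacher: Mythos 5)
Your proposal is correct and follows essentially the same route as the paper: residual finiteness of $PVT_n$ via linearity of right-angled Artin groups plus Mal'cev's theorem, promotion to $VT_n$ through the finite-index normal subgroup $PVT_n$, and Mal'cev again for the Hopfian property. The only differences are cosmetic — you derive linearity via the Davis--Januszkiewicz embedding into a right-angled Coxeter group where the paper cites Hsu--Wise directly, and you spell out the normal-core argument where the paper simply invokes stability of residual finiteness under finite extensions.
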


\begin{proof}
 It is well-known that a right-angled Artin group is linear \cite[Corollary 3.6]{HsuWise1999} and that a finitely generated linear group is residually finite \cite{Mal'cev1940}. Thus, $PVT_n$ is linear, and hence residually finite. Since any extension of a residually finite group by a finite group is residually finite, it follows that $VT_n$ is residually finite. The second assertion follows from the fact that every finitely generated residually finite group is Hopfian \cite{Mal'cev1940}.
\end{proof}

The action of $S_n$ on $PVT_n$ (Remark \ref{rephrasing-action}) gives a group homomorphism $\phi:S_n\rightarrow \Aut(PVT_n)$. We conclude the section with the following observation.

\begin{proposition}\label{sn-action-pvtn}
The homomorphism  $\phi:S_n\rightarrow \Aut(PVT_n)$ is injective for each $n\geq 2$.
\end{proposition}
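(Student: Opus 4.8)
The plan is to show that $\ker\phi$ is trivial by tracking how $\rho\in S_n$ permutes the generating set $\mathcal{S}$. Dispose of $n=2$ first: here $PVT_2\cong\mathbb{Z}=\langle\lambda_{1,2}\rangle$ and $S_2=\langle\rho_1\rangle$, and since $\rho_1\lambda_{1,2}\rho_1=\lambda_{1,2}^{-1}\neq\lambda_{1,2}$, the map $\phi$ is injective (in fact an isomorphism onto $\Aut(\mathbb{Z})$).

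Now assume $n\ge 3$. By Theorem \ref{pvtn-right-angled-artin}, $PVT_n$ is a right-angled Artin group on $\mathcal{S}$, so its abelianisation is free abelian with basis the images of the $\lambda_{i,j}$; in particular the elements $\lambda_{i,j}^{\pm 1}$, for $1\le i<j\le n$, are pairwise distinct in $PVT_n$. Write $\sigma_\rho\in\operatorname{Sym}(\{1,\dots,n\})$ for the image of $\rho$ under the identification $\langle\rho_1,\dots,\rho_{n-1}\rangle\cong S_n$ (with $\rho_i\leftrightarrow(i,i+1)$). I would then record, by induction on the word length of $\rho$ in the $\rho_i$ using Remark \ref{rephrasing-action}, that for every pair $k<l$ one has $\rho\,\lambda_{k,l}\,\rho^{-1}=\lambda_{k',l'}^{\,\epsilon}$ for some $\epsilon\in\{1,-1\}$, where $\{k',l'\}=\sigma_\rho(\{k,l\})$ (the indices reordered so that $k'<l'$). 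For the present statement only the unordered conclusion $\{k',l'\}=\sigma_\rho(\{k,l\})$ is needed, and this is immediate from the case-by-case description in Remark \ref{rephrasing-action}.

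Finally, suppose $\rho\in\ker\phi$. Then for every $k<l$ we have $\lambda_{k,l}=\rho\,\lambda_{k,l}\,\rho^{-1}=\lambda_{k',l'}^{\,\epsilon}$, and by the distinctness noted above this forces $\{k',l'\}=\{k,l\}$ (and $\epsilon=1$); that is, $\sigma_\rho$ stabilises every $2$-element subset of $\{1,\dots,n\}$ setwise. Applying this to the subsets $\{1,2\}$ and $\{1,3\}$ gives $\sigma_\rho(1)\in\{1,2\}\cap\{1,3\}=\{1\}$, so $\sigma_\rho(1)=1$; then stabilising each $\{1,j\}$ yields $\sigma_\rho(j)=j$ for all $j$, hence $\sigma_\rho=\operatorname{id}$ and $\rho=1$. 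Thus $\phi$ is injective. There is no genuine obstacle here: the only mildly delicate point is the bookkeeping in the inductive claim of the previous paragraph (which index becomes the smaller one, and where a sign can appear), and even that is unnecessary for injectivity since we use only that the unordered pair $\{k,l\}$ is transported by $\sigma_\rho$.
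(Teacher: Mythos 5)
Your proof is correct, but it follows a genuinely different route from the paper. The paper reduces the problem to the classification of normal subgroups of $S_n$: for $n\ge 3$, $n\neq 4$, the kernel of $\phi$ must be $1$, $A_n$ or $S_n$, and since $\phi(\rho_1)$, $\phi(\rho_2)$, $\phi(\rho_1\rho_2)$ are pairwise distinct the image has more than two elements, forcing the kernel to be trivial; the case $n=4$ is then handled separately by ruling out the Klein four-subgroup via the computation $\phi(\rho_1\rho_3)(\lambda_{1,2})=\lambda_{1,2}^{-1}$. You instead compute the kernel directly: using Remark \ref{rephrasing-action} you note that conjugation by $\rho$ carries $\lambda_{k,l}$ to $\lambda_{k',l'}^{\epsilon}$ with $\{k',l'\}=\sigma_\rho(\{k,l\})$, and the distinctness of the elements $\lambda_{i,j}^{\pm 1}$ (guaranteed by the right-angled Artin presentation of Theorem \ref{pvtn-right-angled-artin}, via the abelianisation) shows that any $\rho\in\ker\phi$ induces a permutation stabilising every $2$-element subset of $\{1,\dots,n\}$, hence the identity. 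Your argument is uniform in $n\ge 3$, avoids the exceptional treatment of $n=4$, and does not invoke the normal subgroup structure of $S_n$; the price is the small bookkeeping induction on word length, which you correctly observe only needs the unordered-pair version of the action. Both arguments ultimately rest on the same ingredients (the explicit conjugation action and the distinctness of the generators), so the difference is one of organisation rather than depth, but yours is arguably the cleaner of the two.
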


\begin{proof}
The assertion is immediate for the case $n=2$. Assume that $n \ge 3$.
\par
Case 1. $n\geq 3$ and $n\neq 4$. Recall that the only normal subgroups of $S_n$ are $1$, $A_n$ and $S_n$. Note that the automorphisms $\phi(\rho_1)$, $\phi(\rho_2)$ and $\phi(\rho_1\rho_2)$ are all distinct. Thus, the order of $\im(\phi)$ is strictly greater than $2$, and hence  $\Ker(\phi)$ must be trivial.
\par
Case 2. $n=4$. In this case, the normal subgroups of $S_4$ are $1$, $S_4$, $A_4$ and $$K_4= \{1, (1,\;2)(3,\;4), (1,\;3)(2,\;4), (1,\;4)(2,\;3) \},$$ the Klein four group. As in the previous case, $\im(\phi)$ has more than two elements, and hence $\Ker(\phi)$ is either trivial or $K_4$. Note that $(1,\;2)(3,\;4)=\rho_1\rho_3$. Since $\phi(\rho_1\rho_3)(\lambda_{1,2}) = \lambda_{1,2}^{-1} \neq 1$, we have $\rho_1\rho_3\not\in \Ker(\phi)$. Hence, $\Ker(\phi)$ must be trivial in this case as well.
\end{proof}
\medskip

\section{Decomposition of $PVT_n$ as iterated semi-direct product}\label{section-decom-semidirect-product}
Let $i_n: PVT_{n-1} \to PVT_n$ be the natural inclusion obtained by adding a strand to the rightmost side of the diagram of an element of $PVT_{n-1}$. In the reverse direction, we have a well-defined homomorphism $f_n: PVT_n \to PVT_{n-1} $ obtained by removing the $n$-th strand from the diagram of an element of $PVT_n$. Algebraically, $f_n$ is defined on generators of $PVT_n$ by
\begin{equation*}
f_n (\lambda_{i, j})=
\begin{cases}
\lambda_{i, j} & \text{if}\;\; j \neq n,\\
1 & \text{if}\;\;  j = n.
\end{cases}
\end{equation*}
 Furthermore, we have $f_n \circ i_n= \id_{PVT_{n-1}}$, and hence $f_n$ is surjective. For each $n \ge 2$,  let $U_n$ denote $\Ker(f_n)$. Then we have the split short exact sequence
 $$
\begin{tikzcd}
1\arrow{r} & U_n \arrow{r} & PVT_n \arrow{r}{f_n}& PVT_{n-1} \ar[l , "i_n" ', bend left=-33] \arrow{r} & 1,
\end{tikzcd}
$$
that is, $$PVT_n \cong U_n \rtimes PVT_{n-1}.$$

\begin{theorem}\label{semidirect-decomposition-ptn}
For $n \geq 2$, we have 
$$PVT_n \cong U_n \rtimes (U_{n-1} \rtimes (\cdots \rtimes (U_4 \rtimes(U_3 \rtimes U_2))\cdots )),$$
where $U_2 = PVT_2 \cong \mathbb{Z}$ and $U_i = \Ker(f_i : PVT_i \to PVT_{i-1})$ are infinitely generated free groups for $i \geq 3$.
\end{theorem}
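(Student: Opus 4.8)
First, the iterated semidirect product decomposition should follow purely formally: the split short exact sequence $1 \to U_n \to PVT_n \to PVT_{n-1} \to 1$ established above gives $PVT_n \cong U_n \rtimes PVT_{n-1}$, and iterating this down to $PVT_2$ (where $U_2 = \Ker(f_2) = PVT_2 = \langle \lambda_{1,2}\rangle \cong \mathbb{Z}$, since $PVT_1$ is trivial and Theorem~\ref{pvtn-right-angled-artin} imposes no commuting relations when $n=2$) yields $PVT_n \cong U_n \rtimes (U_{n-1} \rtimes (\cdots \rtimes (U_4 \rtimes(U_3 \rtimes U_2))\cdots ))$. So the real content is to show that $U_n = \Ker(f_n)$ is free of infinite rank for $n \ge 3$, and this is what I would focus on.

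For this I would apply the Reidemeister--Schreier method to the presentation of $PVT_n$ from Theorem~\ref{pvtn-right-angled-artin}, relative to the subgroup $U_n$, taking as Schreier transversal a prefix-closed set of words representing the elements of the complement $i_n(PVT_{n-1})$ (a normal form such as ShortLex makes this precise, the commuting relations rendering a naive notion of reduced word ambiguous). Since $i_n(PVT_{n-1})$ is a subgroup carried isomorphically onto $PVT_{n-1}$ by $f_n$, a routine computation should show: the generator $\gamma(t,\lambda_{p,q})$ is trivial whenever $q \le n-1$, while $\gamma(t,\lambda_{k,n}) = t\lambda_{k,n}t^{-1}$ for $t \in PVT_{n-1}$ and $1 \le k \le n-1$; and, rewriting $\tau(trt^{-1})$ over the defining relators $r=[\lambda_{p,q},\lambda_{r',s'}]$ of $PVT_n$, those $r$ not involving the index $n$ give the trivial word, whereas those of the form $r=[\lambda_{k,n},\lambda_{r',s'}]$ (so necessarily $r',s'\le n-1$ and $r',s'\neq k$) give exactly the relation $t\lambda_{k,n}t^{-1} = (t\lambda_{r',s'})\lambda_{k,n}(t\lambda_{r',s'})^{-1}$.

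I would then set $N_k = \langle \lambda_{r',s'} : 1 \le r' < s' \le n-1,\ k \notin \{r',s'\}\rangle$. Being generated by a subset of the standard generators of the right-angled Artin group $PVT_{n-1}$, it is the parabolic subgroup on the strands $\{1,\dots,n-1\}\setminus\{k\}$, hence $N_k \cong PVT_{n-2}$. The relations just obtained say precisely that, for each fixed $k$, the generator $t\lambda_{k,n}t^{-1}$ of $U_n$ depends on $t$ only through the coset $tN_k$; as these are the only defining relations, $U_n$ is the free group on a set in bijection with $\bigsqcup_{k=1}^{n-1} PVT_{n-1}/N_k$. Finally, $\lambda_{p,q}\mapsto \lambda_{p,q}$ for $k\notin\{p,q\}$ and $\lambda_{p,q}\mapsto 1$ otherwise defines a retraction of $PVT_{n-1}$ onto $N_k$, whose kernel contains an element of infinite order (e.g. $\lambda_{1,k}$ if $k\ge 2$, or $\lambda_{1,2}$ if $k=1$, using $n\ge 3$), so $N_k$ has infinite index in $PVT_{n-1}$. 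Hence each $PVT_{n-1}/N_k$ is infinite and $U_n$ is free of infinite rank, as required. (Conceptually, the same conclusion follows from Bass--Serre theory: iterating $A(\Gamma) = A(\Gamma \setminus v) *_{A(\operatorname{lk} v)} A(\operatorname{st} v)$ over the pairwise non-adjacent vertices $v_{1,n},\dots,v_{n-1,n}$ exhibits $PVT_n$ as the fundamental group of a star of groups with centre $PVT_{n-1}$, leaves $N_k \times \langle\lambda_{k,n}\rangle$, and edge groups $N_k$; since $f_n$ is injective on every conjugate of $PVT_{n-1}$, the group $U_n$ acts on the Bass--Serre tree with trivial edge stabilisers and with vertex stabilisers trivial or infinite cyclic, hence is free.)

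The step needing genuine care is the Reidemeister--Schreier bookkeeping in the second paragraph: fixing a prefix-closed transversal inside $i_n(PVT_{n-1})$, carrying out the rewriting $\tau(trt^{-1})$ for each family of relators of $PVT_n$, and confirming that no relation survives beyond the coset identifications $t\lambda_{k,n}t^{-1} = (t\lambda_{r',s'})\lambda_{k,n}(t\lambda_{r',s'})^{-1}$ — in particular that nothing collapses the asserted free generating set. Everything else, including the identification $N_k \cong PVT_{n-2}$ and the infinitude of its index, is straightforward once the right-angled Artin structure of Theorem~\ref{pvtn-right-angled-artin} is in hand.
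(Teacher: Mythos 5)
Your proposal is correct and follows essentially the same route as the paper: a Reidemeister--Schreier computation for $U_n$ with transversal $PVT_{n-1}$, showing that the only surviving relations identify the generators $t\lambda_{k,n}t^{-1}$ along cosets, so that $U_n$ is free of infinite rank. Your bookkeeping via the parabolic subgroups $N_k\cong PVT_{n-2}$ is in fact slightly sharper than the paper's, which merely notes that the relations ``identify two generators'' and then exhibits the explicit infinite family $\lambda_{n-2,n-1}^{-\epsilon}\lambda_{n-1,n}\lambda_{n-2,n-1}^{\epsilon}$ of pairwise distinct generators.
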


\begin{proof} 
It is clear that $U_2 = PVT_2 \cong \mathbb{Z}$. We use the Reidemeister-Schreier method for $n \geq 3$, for which we take the Schreier system to be $PVT_{n-1}$.
Note that for $\mu \in PVT_{n-1}$ and $\lambda_{i,j} \in \mathcal{S}$, we have
\begin{equation*}
\gamma(\mu,\lambda_{i,j}) =
\begin{cases}
1 & \text{if}\;\; j \neq n,\\
\mu \lambda_{i,j} \mu^{-1} & \text{ if } j=n.
\end{cases}
\end{equation*}
This implies that $U_n$ is generated by the set
$$X=\{ \mu \lambda_{i,n} \mu^{-1} ~|~ \mu \in PVT_{n-1} \text{ and } i=1,2, \dots, n-1 \}.$$
Since $PVT_3 \cong F_3$, it follows that $U_3$ is an infinitely generated free group with generators $$\{ \mu \lambda_{1,3} \mu^{-1},~ \mu \lambda_{2,3}\mu^{-1}~|~ \mu \in PVT_2\}.$$
For $n \geq 4$, in $PVT_n$ we have relations of the form
$$\lambda_{i,j}\lambda_{k,l}\lambda_{i,j}^{-1}\lambda_{k,l}^{-1}=1,$$
where $i, j, k, l$ are distinct integers with $i<j$ and $k < l$. First, consider the case when none of $i, j, k, l$ is equal to $n$. Since $\gamma(\mu, \lambda_{i,j})=1$ for $\mu \in PVT_{n-1}$  and $\lambda_{i,j} \in \mathcal{S}$ with $j \neq n$, we have
$$\tau(\mu \lambda_{i,j}\lambda_{k,l}\lambda_{i,j}^{-1}\lambda_{k,l}^{-1} \mu^{-1} )= 1,$$ 
and hence there is no non-trivial relation in this case.
\par
Next, we consider the case when exactly one of $i, j, k, l$ is equal to $n$. Without loss of generality, we can assume that $j=n$. Applying the rewriting process to the relations $\lambda_{i,n}\lambda_{k,l}\lambda_{i,n}^{-1}\lambda_{k,l}^{-1}=1$ of $PVT_n$ gives
\begin{align*}
\tau(\mu \lambda_{i,n}\lambda_{k,l}\lambda_{i,n}^{-1}\lambda_{k,l}^{-1} \mu^{-1} ) &=\gamma(\overline{\mu}, \lambda_{i,n}) \gamma(\overline{\mu \lambda_{i,n}},\lambda_{k,l}) \gamma(\overline{\mu \lambda_{i,n}\lambda_{k,l}},\lambda_{i,n}^{-1}) \gamma(\overline{\mu \lambda_{i,n}\lambda_{k,l} \lambda_{i,n}^{-1}},\lambda_{k,l}^{-1})\\
&= \gamma(\overline{\mu}, \lambda_{i,n})\gamma(\overline{\mu \lambda_{i,n}\lambda_{k,l}},\lambda_{i,n}^{-1})\\
&= \gamma(\mu, \lambda_{i,n})\gamma(\mu \lambda_{k,l},\lambda_{i,n}^{-1})\\
&=\lambda_{i,n}^{\mu^{-1}} \big(\lambda_{i,n}^{\lambda_{k,l}^{-1}\mu^{-1}}\big)^{-1}.
\end{align*}
This gives the relations
$$\mu \lambda_{i,n} \mu^{-1}=  \mu \lambda_{k,l} \lambda_{i,n} \lambda_{k,l}^{-1} \mu^{-1}$$
in $U_n$, which simply identifies two generators of $U_n$. Finally, to prove that there are still infinitely many distinct generators in the set $X$, we consider the sequence of elements
$$\alpha_\epsilon = \lambda_{n-2, n-1}^{-\epsilon} \lambda_{n-1, n} \lambda_{n-2, n-1}^{\epsilon}$$
in $X$, where $\epsilon \in \mathbb{Z}$. Notice that $\alpha_\epsilon \neq \alpha_{\epsilon'}$ for $\epsilon \neq \epsilon'$. Hence, $U_n$ is an infinite rank free group for each $n \ge 3$.
\end{proof}

Theorem \ref{semidirect-decomposition-ptn} is an analogue of a similar result of Bardakov \cite[Theorem 2]{Bardakov2004} for the virtual pure braid group and of Markoff \cite{Markoff1945} for the classical pure braid group.

\begin{corollary}\label{center-of-PVT_n}
$\Z(PVT_n)=1$ for $n \geq 3$ and $\Z(VT_n)=1$ for $n \geq 2$.
\end{corollary}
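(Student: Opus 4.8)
The plan is to establish the statement for $PVT_n$ first, by induction on $n$ using the split decomposition $PVT_n \cong U_n \rtimes PVT_{n-1}$ from Theorem \ref{semidirect-decomposition-ptn}, and then to transfer it to $VT_n$ via the projection $\pi\colon VT_n \to S_n$. For the base case I would use that $PVT_3 \cong F_3$ is free of rank $3$, hence centreless. For the inductive step, fix $n \ge 4$, assume $\Z(PVT_{n-1}) = 1$, and let $g \in \Z(PVT_n)$. Since the retraction $f_n\colon PVT_n \to PVT_{n-1}$ is surjective, it carries central elements to central elements, so $f_n(g) \in \Z(PVT_{n-1}) = 1$ and hence $g \in \Ker(f_n) = U_n$. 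Being central in $PVT_n$, the element $g$ in particular centralises the subgroup $U_n$, i.e. $g \in \Z(U_n)$; but $U_n$ is free of infinite rank by Theorem \ref{semidirect-decomposition-ptn}, so $\Z(U_n) = 1$ and $g = 1$. This yields $\Z(PVT_n) = 1$ for all $n \ge 3$.

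To pass to $VT_n$ for $n \ge 3$, I would apply the surjection $\pi\colon VT_n \to S_n$, which maps $\Z(VT_n)$ into $\Z(S_n)$; the latter is trivial since $n \ge 3$, so $\Z(VT_n) \subseteq \Ker(\pi) = PVT_n$. An element of $\Z(VT_n)$ lying in $PVT_n$ commutes with all of $PVT_n$, hence lies in $\Z(PVT_n)$, which is trivial by the previous step; therefore $\Z(VT_n) = 1$. The value $n = 2$ has to be treated separately: here $VT_2 \cong \mathbb{Z}_2 * \mathbb{Z}_2$ is the infinite dihedral group, which is well known to be centreless (for instance, $s_1\rho_1$ has infinite order and is inverted by conjugation by $s_1$, so it is not central, and no order-two element is central either).

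I do not anticipate a real obstacle: the content is carried entirely by Theorem \ref{semidirect-decomposition-ptn} — specifically that each $U_n$ is free of infinite rank, in particular of rank $\ge 2$ and thus centreless — together with the elementary facts $\Z(S_n) = 1$ for $n \ge 3$ and $\Z(F_r) = 1$ for $r \ge 2$. The only points needing a little care are noting that surjectivity of $f_n$ and of $\pi$ is exactly what allows centrality to be pushed forward along these maps, and isolating the exceptional case $n = 2$ for $VT_n$, where $\Ker(\pi) = PVT_2 \cong \mathbb{Z}$ and the argument through $S_n$ collapses. As an alternative route for $PVT_n$, the claim follows directly from Corollary \ref{irred-raag} together with the standard description of the centre of a right-angled Artin group as the subgroup generated by the vertices adjacent to all others: for $n \ge 3$ no generator $\lambda_{i,j}$ commutes with every other generator, since it does not commute with $\lambda_{i,k}$ for $k \neq j$, so the centre is trivial.
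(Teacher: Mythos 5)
Your argument is correct and follows essentially the same route as the paper: the paper likewise uses the semidirect decomposition $PVT_n \cong U_n \rtimes PVT_{n-1}$ with $\Z(U_n)=1$ (via the elementary fact that $\Z(N\rtimes H)\leq \Z(N)$ when $\Z(H)=1$), inducts from the free base cases, and then passes to $VT_n = PVT_n\rtimes S_n$ using $\Z(S_n)=1$, treating $VT_2\cong \mathbb{Z}_2 * \mathbb{Z}_2$ separately. Your closing alternative via the centre of a right-angled Artin group is a valid shortcut the paper does not take, but the main proof matches.
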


\begin{proof}
We use the elementary fact that if $G = N\rtimes H$ is an internal semi-direct product with $\Z(H) = 1$, then $\Z(G) \leq \Z(N)$. Recall that $PVT_2\cong \mathbb{Z}$ and $PVT_3 \cong F_3$. Since $PVT_4 = U_4 \rtimes PVT_3$, we have $$\Z(PVT_4) \leq \Z(U_4).$$
By Theorem \ref{semidirect-decomposition-ptn}, $\Z(U_4)=1$, and hence $\Z(PVT_4) = 1$. An easy induction gives $\Z(PVT_n) = 1$ for all  $n \geq 4$.
\par
Since $VT_2 \cong \mathbb{Z}_2 * \mathbb{Z}_2$, we have $\Z(VT_2) = 1$. For $n\geq 3$, since $\Z(S_n)=1$ and $VT_n = PVT_n \rtimes S_n$, we have  $\Z(VT_n) \leq \Z(PVT_n)=1$.
\end{proof}

A right-angled Artin group is called {\it spherical} if its corresponding Coxeter group is finite. $PT_n$ is clearly non-spherical for $n\ge 3$. It is a well-known conjecture that every irreducible non-spherical Artin group has trivial center \cite[Conjecture B]{GodelleParis}. In view of Corollary \ref{irred-raag}, $PVT_n$ is irreducible and non-spherical. Hence, by Corollary \ref{center-of-PVT_n}, $PVT_n$ satisfies the conjecture for $n \geq 3$.
\medskip

\section{Automorphism group of $PVT_n$}\label{section-auto-groupo-pvtn}

\subsection{Graph of $PVT_n$}
Given a graph $\Gamma$ with the vertex set $V$, the right-angled Artin group associated to $\Gamma$ is the group
$$A_{\Gamma} =\big\langle V \mid [v,\; w ]=1\; \text{iff}\; v, w\in V\; \text{are joined by an edge in}\; \Gamma \big\rangle.$$ 
Conversely, each right-angled Artin group gives a graph (corresponding to its Artin presentation) whose vertex set is the set of generators of the group and there is an edge between the two vertices if and only if the two generators commute. It is easy to see that the right-angled Artin group corresponding to the complete graph on $n$ vertices is the free abelian group $\mathbb{Z}^n$ and the group corresponding to the edgeless graph on $n$ vertices is the free group $F_n$.
\par

The generating set $\mathcal{S} = \{\lambda_{i, j} \mid 1\leq i < j \leq n\}$ is the vertex set of the defining graph of $PVT_n$ for $n \ge 2$.  Note that $PVT_2\cong \mathbb{Z}$, $PVT_3\cong F_3$ and $PVT_4\cong (\mathbb{Z}\times \mathbb{Z})\ast (\mathbb{Z}\times \mathbb{Z})\ast (\mathbb{Z} \times \mathbb{Z})$. While the graphs of $PVT_2$ and $PVT_3$ are edgeless graphs on 1 and 3 vertices, respectively, the graphs of $PVT_4$ and $PVT_5$ are shown in figures \ref{graph-pvt4} and \ref{graph-pvt5}, respectively.  It is interesting to note that the graph of $PVT_n$ is the Kneser graph $K(n, 2)$, which is the same as the commuting graph of the conjugacy class of transpositions in $S_n$.  

\begin{figure}[hbtp]
\centering
\includegraphics[scale=0.5]{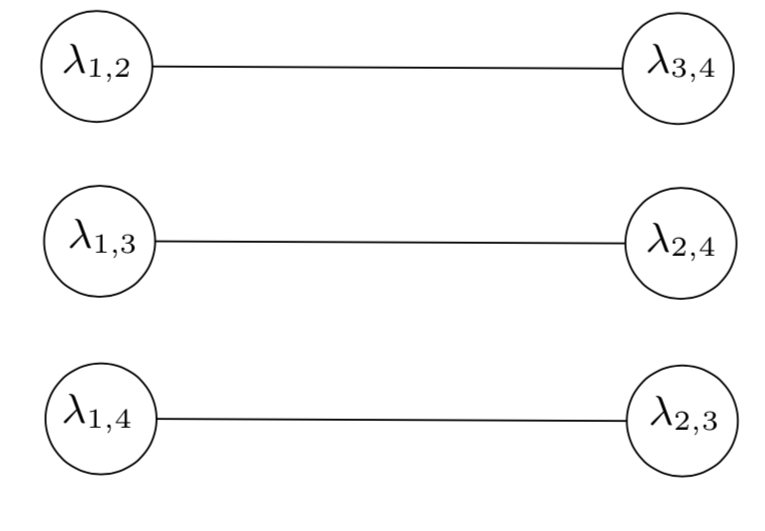}
\caption{The graph of $PVT_4$}
\label{graph-pvt4}
\end{figure}

\begin{figure}[hbtp]
\centering
\includegraphics[scale=0.7]{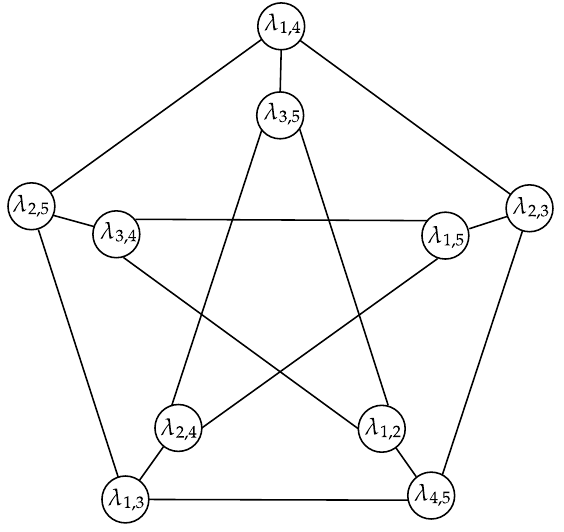}
\caption{The graph of $PVT_5$}
\label{graph-pvt5}
\end{figure}
\medskip

The {\it link} $lk(v)$ of a vertex $v \in V$ is defined as the set of all vertices that are connected to $v$ by an edge. The {\it star}  $st(v)$ of $v$ is defined as $lk(v) \cup \{v\}$. If $v\neq w$ are vertices, then we say that $w$ dominates $v$, written $v \leq w$,  if $lk(v) \subseteq st(w)$.
\par

For each $\lambda_{i, j}\in \mathcal{S}$, let $$N_{i,j}  = \mathcal{S} \setminus st(\lambda_{i,j}) = \big\{\lambda_{k,l}\mid [\lambda_{i,j},\; \lambda_{k,l}] \neq 1\big\}$$
be the set of vertices that are not connected to $\lambda_{i, j}$ by an edge.

\begin{proposition}\label{size-of-non-commuting-generators}
The following hold for each $\lambda_{i,j} \in \mathcal{S}$: 
\begin{enumerate}
\item $| N_{i,j} | = 2n - 4$.
\item $| st(\lambda_{i,j})| = \dfrac{(n-2)(n-4)+n}{2}$. In particular, the graph of $PVT_n$ is  regular. 
\end{enumerate}
\end{proposition}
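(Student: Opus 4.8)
The plan is to read the adjacency relation of the graph of $PVT_n$ directly off the presentation in Theorem \ref{pvtn-right-angled-artin} and then reduce both assertions to elementary counting. First I would record that, since $PVT_n$ is the right-angled Artin group on the vertex set $\mathcal{S}$ whose commuting relations are exactly $[\lambda_{i,j},\lambda_{k,l}]=1$ when $i,j,k,l$ are pairwise distinct, the standard structural fact about right-angled Artin groups — that two standard generators commute if and only if they span an edge of the defining graph — already invoked in the discussion preceding the proposition, yields
$$N_{i,j}=\big\{\lambda_{k,l}\in\mathcal{S}\mid \{k,l\}\cap\{i,j\}\neq\emptyset,\ (k,l)\neq(i,j)\big\}.$$
That is, $N_{i,j}$ is precisely the set of generators whose index pair meets $\{i,j\}$, excluding $\lambda_{i,j}$ itself.

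For part (1), I would partition $N_{i,j}$ according to which of $i$ and $j$ is the shared index. The generators $\lambda_{k,l}$ with $i\in\{k,l\}$ and $j\notin\{k,l\}$ are in bijection with the $n-2$ elements of $\{1,\dots,n\}\setminus\{i,j\}$; symmetrically there are $n-2$ generators containing $j$ but not $i$; and the only generator containing both $i$ and $j$ is $\lambda_{i,j}$, which is excluded from $N_{i,j}$. These two families are disjoint, so $|N_{i,j}|=2(n-2)=2n-4$.

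For part (2), I would use $st(\lambda_{i,j})=\mathcal{S}\setminus N_{i,j}$ together with $|\mathcal{S}|=n(n-1)/2$, which gives
$$|st(\lambda_{i,j})|=\frac{n(n-1)}{2}-(2n-4)=\frac{n^2-5n+8}{2}=\frac{(n-2)(n-4)+n}{2}.$$
Since this value is independent of the pair $(i,j)$, every vertex has the same degree $|st(\lambda_{i,j})|-1$, so the graph of $PVT_n$ is regular. (One can check the small cases $n=2,3,4$ directly against the formula, which is reassuring but not strictly needed.)

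There is no serious obstacle here: the content is a one-line description of adjacency plus bookkeeping with binomial coefficients. The only point I would state explicitly rather than gloss over is the justification that sharing an index forces non-commutativity — equivalently, that the defining graph of the right-angled Artin group $PVT_n$ genuinely coincides with its commuting graph — which is the standard fact about right-angled Artin groups and can be cited rather than reproved.
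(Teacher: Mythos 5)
Your proposal is correct and follows essentially the same route as the paper: both identify $N_{i,j}$ as the set of generators sharing an index with $\{i,j\}$ (the paper splits this into four subsets according to whether the shared index is the smaller or larger coordinate, you into two), count $2(n-2)$, and obtain part (2) by subtracting from $|\mathcal{S}|=n(n-1)/2$. Your explicit remark that one must know the defining graph of a right-angled Artin group coincides with its commuting graph is a reasonable point of care that the paper leaves implicit.
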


\begin{proof}
Note that the set $N_{i,j}$ can be written as a union of four disjoint subsets as follows
\begin{align*}
N_{i,j} = &\{ \lambda_{i, k} \mid i+1\leq k \leq n,~ k\neq j\} \cup \{ \lambda_{j, k} \mid j+1\leq k \leq n\}&\\
&\cup \{ \lambda_{k, i} \mid 1\leq k \leq i-1\} \cup \{ \lambda_{k, j} \mid 1\leq k \leq j-1,~ k\neq i\}.&
\end{align*}
Observe that $$| \{ \lambda_{i, k} \mid i+1\leq k \leq n, ~k\neq j\} | + | \{ \lambda_{k, i} \mid 1\leq k \leq i-1\} | =n-2,$$
$$ | \{ \lambda_{j, k} \mid j+1\leq k \leq n\} | + | \{ \lambda_{k, j} \mid 1\leq k \leq j-1, ~k\neq i\}| =n-2,$$
and hence $| N_{i,j} | = 2n - 4$. The second assertion is immediate.
\end{proof}

\subsection{Automorphism group of $PVT_n$ for $n \neq 4$}

It is well-known (see, for example,  \cite{Laurence1995, Servatius1989}) that the automorphism group $\Aut(A_{\Gamma})$ of a right-angled Artin group $A_\Gamma$ is generated by the following four types of automorphisms.

\begin{enumerate}
\item Graph automorphism: Automorphism of $A_{\Gamma}$ induced by an automorphism of the graph $\Gamma$.
\item Inversion $\iota_a$: Sends a generator $a$ to its inverse and leaves all other generators fixed.
\item Transvection $\tau_{ab}$: Sends a generator $a$ to $ab$ and leaves all other generators fixed, where $b$ is another generator with $a \leq b$.
\item Partial conjugation $p_{b, C}$: If $b$ is a generator and $C$ is a union of connected components of $\Gamma \setminus \Gamma (st(b))$, then  $p_{b, C}$ sends each generator $a$ in $C$ to $a^b$ and leaves the other generators fixed. Here, $ \Gamma (st(b))$ is the subgraph of $\Gamma$ spanned by the set $st(b)$. It follows that if $\Gamma \setminus \Gamma (st(b))$ is connected or $C=\Gamma \setminus \Gamma (st(b))$, then the partial conjugation $p_{b, C}$ is simply the inner automorphism induced by $b$.
\end{enumerate}

We consider the following subgroups of the automorphism group $\Aut(PVT_n)$ of $PVT_n$ for $n\geq 2$. 

\begin{itemize}
\item $\Aut_{gr}(PVT_n)$: The subgroup generated by all graph automorphisms.
\item $\Aut_{inv}(PVT_n)$: The subgroup generated by all inversions.
\item $\Aut_{tr}(PVT_4)$: The subgroup generated by all transvections.
\item $\Aut_{pc}(PVT_n)$: The subgroup generated by all partial conjugations.
\item $\Inn(PVT_n)$: The subgroup of all inner automorphisms.
\end{itemize}

Following is an immediate consequence of the definition.

\begin{lemma}\label{inversion-structure}
$\Aut_{inv}(PVT_n) \cong \mathbb{Z}_2^{n(n-1)/2}$ for all $n\geq 2$.
\end{lemma}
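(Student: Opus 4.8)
The plan is to realize $\Aut_{inv}(PVT_n)$ as an internal direct product of the cyclic groups generated by the individual inversions. First I would record two elementary facts: each inversion $\iota_{\lambda_{i,j}}$ is an involution, since applying it twice restores every generator; and any two inversions $\iota_{\lambda_{i,j}}$, $\iota_{\lambda_{k,l}}$ commute, because each alters only a single generator and fixes the rest. Hence $\Aut_{inv}(PVT_n)$ is an abelian group of exponent $2$, that is, an $\mathbb{F}_2$-vector space, spanned by the $n(n-1)/2$ inversions $\iota_{\lambda_{i,j}}$. Equivalently, the assignment $e_{i,j}\mapsto \iota_{\lambda_{i,j}}$ on standard basis vectors extends to a surjective homomorphism $\Phi\colon \mathbb{Z}_2^{n(n-1)/2}\to \Aut_{inv}(PVT_n)$.

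It then remains only to check that $\Phi$ is injective. An element of $\mathbb{Z}_2^{n(n-1)/2}$ is the indicator vector of a subset $A\subseteq \mathcal{S}$, and $\Phi(A)$ is the automorphism inverting every $\lambda\in A$ and fixing $\mathcal{S}\setminus A$. If $\Phi(A)=\id$ and some $\lambda_{i,j}\in A$, then $\lambda_{i,j}^{-1}=\lambda_{i,j}$ in $PVT_n$, forcing $\lambda_{i,j}^2=1$. But $PVT_n$ is a right-angled Artin group by Theorem \ref{pvtn-right-angled-artin}, hence torsion-free; alternatively, passing to the abelianization $PVT_n^{\mathrm{ab}}\cong \mathbb{Z}^{n(n-1)/2}$, the image of $\lambda_{i,j}$ has infinite order. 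Either way $A=\varnothing$, so $\Phi$ is an isomorphism and $\Aut_{inv}(PVT_n)\cong \mathbb{Z}_2^{n(n-1)/2}$.

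I do not expect any genuine obstacle: the only nontrivial input is the absence of $2$-torsion among the generators of $PVT_n$, which is immediate from its RAAG structure established earlier. If one wishes to keep the argument self-contained and avoid quoting torsion-freeness of right-angled Artin groups, the abelianization route suffices on its own, since under $PVT_n\to PVT_n^{\mathrm{ab}}\cong\mathbb{Z}^{n(n-1)/2}$ the inversions descend precisely to the $n(n-1)/2$ coordinate sign changes, which visibly span a copy of $\mathbb{Z}_2^{n(n-1)/2}$ and are linearly independent over $\mathbb{F}_2$.
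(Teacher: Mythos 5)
Your argument is correct: the paper states this lemma without proof as an immediate consequence of the definition, and your write-up simply supplies the details it leaves implicit. The two points you verify --- that the inversions are commuting involutions, and that no nonempty product of them can be the identity because the generators have infinite order (seen via torsion-freeness of RAAGs or via the abelianization $\mathbb{Z}^{n(n-1)/2}$) --- are exactly what is needed, so there is nothing to add.
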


Since the graph of $PVT_n$ is the Kneser graph $K(n, 2)$, its group of graph automorphisms is well-known \cite[Corollary 7.8.2]{GodsilRoyle2001}. However, we give a direct computation of $\Aut_{gr}(PVT_n)$ in our set-up. For $n=2$, there is only one vertex, and hence $\Aut_{gr}(PVT_2)=1$. Assume that $n\geq 3$. For each $1\leq k \leq n-1$, define $$\theta_k := \iota_{\lambda_{k,k+1}} \circ \phi(\rho_k),$$
 where $\phi: S_n \to \Aut(PVT_n)$ is the map from Proposition \ref{sn-action-pvtn}. The action of $\theta_k$ on the set $\mathcal{S}$ of generators is described explicitly as follows:

\begin{equation*}
\theta_k :
\begin{cases}
\lambda_{k, k+1} \longrightarrow \lambda_{k, k+1}, & \\
\lambda_{k, j} \longrightarrow \lambda_{k+1, j} & \text{for all}\;\; k+2 \leq j \leq n,\\
\lambda_{k+1, j} \longrightarrow \lambda_{k, j} & \text{for all}\;\; k+2 \leq j \leq n,\\
\lambda_{i, k} \longrightarrow \lambda_{i, k+1} & \text{for all}\;\; 1\leq i < k,\\
\lambda_{i, k+1} \longrightarrow \lambda_{i, k} & \text{for all}\;\;  1\leq i < k,\\
\lambda_{i, j} \longrightarrow \lambda_{i, j} & \text{else, i.e.,}\;\; i \geq k+2,\; \text{or}\;\; i < k \;\text{and}\; k\neq j \neq k+1.
\end{cases}
\end{equation*}

Since each automorphism $\theta_k$ keeps the set $\mathcal{S}$ invariant, we have 
\begin{equation}\label{thetas-in-graph}
\langle \theta_1, \theta_2, \ldots, \theta_{n-1}\rangle \le \Aut_{gr}(PVT_n).
\end{equation}
Note that this action of  $\langle \theta_1, \theta_2, \ldots, \theta_{n-1}\rangle$ on $\mathcal{S}$ is transitive. In fact, given any generator $\lambda_{i, j} \in \mathcal{S}$, the automorphism $(\theta_{i-1}\theta_{i-2} \cdots \theta_{2}\theta_{1})(\theta_{j-1}\theta_{j-2} \cdots \theta_{3}\theta_{2})$ maps $\lambda_{1, 2}$ onto $\lambda_{i, j}$.

\begin{lemma}
$\langle \theta_1, \theta_2, \ldots, \theta_{n-1}\rangle\cong S_n$ for all $n\geq 3$.
\end{lemma}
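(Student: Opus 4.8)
The plan is to show that the map $\rho_k \mapsto \theta_k$ extends to a well-defined surjective homomorphism $S_n \to \langle \theta_1, \ldots, \theta_{n-1}\rangle$, and then argue injectivity. Since $\theta_k = \iota_{\lambda_{k,k+1}} \circ \phi(\rho_k)$, one can think of the $\theta_k$ as living inside $\Aut(PVT_n)$, and the first task is to verify that they satisfy the Coxeter relations of $S_n$, namely $\theta_k^2 = \mathrm{id}$, $\theta_k \theta_l = \theta_l \theta_k$ for $|k-l| \geq 2$, and $\theta_k \theta_{k+1} \theta_k = \theta_{k+1} \theta_k \theta_{k+1}$. These are most cleanly checked by tracking the action on the generating set $\mathcal{S}$, using the explicit formulas for $\theta_k$ displayed just above the lemma; since each $\theta_k$ permutes $\mathcal{S}$ (not merely $\mathcal{S} \sqcup \mathcal{S}^{-1}$), each $\theta_k$ determines and is determined by a permutation of $\mathcal{S}$, and the relations reduce to identities of permutations.

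The conceptually efficient way to organize this is to observe that the action of $\theta_k$ on $\mathcal{S}$, read through the bijection $\lambda_{i,j} \leftrightarrow \{i,j\}$ between $\mathcal{S}$ and the set of $2$-element subsets of $\{1,\ldots,n\}$ (equivalently, the conjugacy class of transpositions in $S_n$, as remarked in the text), is precisely the action of the transposition $(k,\,k+1)$ by conjugation: indeed comparing the displayed formula for $\theta_k$ with Remark \ref{rephrasing-action}, $\theta_k$ sends $\lambda_{k,l}$ to $\lambda_{k',l'}$ where $\{k',l'\} = (k,k+1)\{k,l\}(k,k+1)$, and unlike $\rho_k$ it introduces no sign. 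So I would define $\psi \colon S_n \to \mathrm{Sym}(\mathcal{S})$ to be the (injective, for $n \geq 3$) homomorphism induced by the conjugation action of $S_n$ on its transpositions, note $\psi(\rho_k)$ agrees with the permutation underlying $\theta_k$, and conclude that $k \mapsto \theta_k$ factors as $S_n \xrightarrow{\psi} \mathrm{Sym}(\mathcal{S})$ followed by the identification of a permutation of $\mathcal{S}$ with the corresponding element of $\langle \theta_1, \ldots, \theta_{n-1}\rangle \leq \mathrm{Aut}_{gr}(PVT_n)$. Because $\theta_k$ is literally the automorphism of $PVT_n$ realizing the permutation $\psi(\rho_k)$ of generators, distinct permutations give distinct automorphisms, so the assignment $\theta_k \mapsto \psi(\rho_k)$ is a well-defined injection; combined with the fact that the $\theta_k$ generate, we get $\langle \theta_1, \ldots, \theta_{n-1}\rangle \cong \mathrm{image}(\psi) \cong S_n$.

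I would fill in the one remaining point: the conjugation action of $S_n$ on the set of its transpositions is faithful for $n \geq 3$. This is standard — the kernel is a normal subgroup centralizing every transposition, hence central in $S_n$, hence trivial for $n \geq 3$ — and can be invoked directly or given in a line. Then $\psi$ is injective, and since the composite $S_n \to \langle\theta_1,\ldots,\theta_{n-1}\rangle$ has image all of $\langle\theta_1,\ldots,\theta_{n-1}\rangle$ (by definition of the latter) and is injective (an element of the kernel would act trivially on $\mathcal{S}$, forcing the corresponding word in the $\rho_k$ to lie in $\ker\psi = 1$), we obtain the claimed isomorphism.

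The main obstacle is bookkeeping rather than mathematics: one must be confident that the displayed action of $\theta_k$ on $\mathcal{S}$ really is an action by genuine permutations of $\mathcal{S}$ (i.e. that the inversion $\iota_{\lambda_{k,k+1}}$ exactly cancels the sign that $\phi(\rho_k)$ introduces on $\lambda_{k,k+1}$, and that no other sign appears) — this is exactly what the formula above records and what Remark \ref{rephrasing-action} makes transparent. Once that identification with the transposition action on $2$-subsets is in hand, the Coxeter relations and the faithfulness are immediate, and no computation with the $PVT_n$ relations themselves is needed.
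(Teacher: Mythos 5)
Your proof is correct, but it takes a genuinely different route from the paper's. The paper proceeds in two stages: it first verifies the Coxeter relations $\theta_i^2=1$, $[\theta_i,\theta_j]=1$ for $|i-j|\ge 2$, and $(\theta_i\theta_{i+1})^3=1$ by direct computation on generators (carrying out the last one explicitly for $i=1$), which yields a surjection $S_n \twoheadrightarrow \langle\theta_1,\dots,\theta_{n-1}\rangle$; it then proves injectivity by running through the normal subgroups of $S_n$ ($1$, $A_n$, $S_n$, plus the Klein four-group when $n=4$) and exhibiting elements such as $\theta_1\theta_2$ and $\theta_1\theta_3$ that are nontrivial, forcing the kernel to be trivial. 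You instead observe that the sign introduced by $\phi(\rho_k)$ on $\lambda_{k,k+1}$ is exactly cancelled by $\iota_{\lambda_{k,k+1}}$, so each $\theta_k$ permutes $\mathcal{S}$, and that under $\lambda_{i,j}\leftrightarrow\{i,j\}$ this permutation is the action of the transposition $(k,\,k+1)$ on $2$-subsets (equivalently, conjugation on transpositions), as Remark \ref{rephrasing-action} makes visible. Factoring through $\mathrm{Sym}(\mathcal{S})$ then gives well-definedness, the Coxeter relations, and injectivity all at once, the last because the conjugation action of $S_n$ on its transpositions is faithful for $n\ge 3$ (the kernel centralizes a generating set, hence is central, hence trivial). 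Your approach buys a uniform treatment with no case distinction at $n=4$ and no explicit verification of $(\theta_1\theta_2)^3=1$; the paper's approach is more computational but self-contained at the level of the displayed formulas and reuses the normal-subgroup argument it already needed for Proposition \ref{sn-action-pvtn}. Both are complete; the one point you should make sure to state cleanly in a final write-up is that an automorphism of $PVT_n$ is determined by its values on the generating set $\mathcal{S}$, so that distinct permutations of $\mathcal{S}$ yield distinct graph automorphisms and the assignment from permutations to automorphisms is an injective homomorphism.
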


\begin{proof} Note that  $\theta_i$ is an involution for each $1\leq i\leq n-1$. It follows from the construction  that $[\theta_i,\; \theta_j] = 1$ for all $|i - j| \ge 2$. We now claim that $(\theta_i \theta_{i+1})^3 = 1$ for all $1\leq i \leq n-2$. We verify this for $i=1$ and other cases will follow similarly. Consider
$$
\theta_1:  \left\{
 \begin{array}{ll}
\lambda_{1, 2} \longrightarrow \lambda_{1, 2}, & \\
\lambda_{1, j} \longrightarrow \lambda_{2, j} & \text{for all}\;\; 3 \leq j \leq n,\\
\lambda_{2, j} \longrightarrow \lambda_{1, j} & \text{for all}\;\; 3 \leq j \leq n,\\
\lambda_{i, j} \longrightarrow \lambda_{i, j} & \text{otherwise, i.e.,}\;\; i \geq 3,
\end{array}
 \right.~~~
\theta_2 : \left\{
 \begin{array}{ll}
\lambda_{1, 2} \longrightarrow \lambda_{1, 3}, \\
\lambda_{1, 3} \longrightarrow \lambda_{1, 2},\\
\lambda_{2, 3} \longrightarrow \lambda_{2, 3}, & \\
\lambda_{2, j} \longrightarrow \lambda_{3, j} & \text{for all}\;\; 4 \leq j \leq n,\\
\lambda_{3, j} \longrightarrow \lambda_{2, j} & \text{for all}\;\; 4 \leq j \leq n,\\
\lambda_{i, j} \longrightarrow \lambda_{i, j} & i \geq 4,\; \text{or}\;\; i = 1 \;\text{and}\; 4\leq j \leq n.
\end{array}
\right.
$$

Note that $\theta_1\theta_2$ fixes the generators $\lambda_{i,j}$ for all $i\geq 4$. Thus, we need to verify only for $\lambda_{i,j}$ with $1 \le i\leq 3$. We see that
\begin{equation*}
\theta_1\theta_2 :
\begin{cases}
\lambda_{1, 2} \longrightarrow \lambda_{2, 3}, & \\
\lambda_{1, 3} \longrightarrow \lambda_{1, 2}, & \\
\lambda_{1, j} \longrightarrow \lambda_{2, j} & \text{for all}\;\; 4 \leq j \leq n,\\
\lambda_{2, 3} \longrightarrow \lambda_{1, 3}, & \\
\lambda_{2, j} \longrightarrow \lambda_{3, j} & \text{for all}\;\; 4 \leq j \leq n,\\
\lambda_{3, j} \longrightarrow \lambda_{1, j} & \text{for all}\;\; 4 \leq j \leq n,\\
\lambda_{i, j} \longrightarrow \lambda_{i, j} & \text{otherwise, i.e.,}\;\; i \geq 4,
\end{cases}
\end{equation*}
and it can be easily seen that $(\theta_1\theta_2)^3=1.$ Thus, sending $\rho_k$ to $\theta_k$ gives a surjective homomorphism from $S_n$ onto $\langle \theta_1, \theta_2, \ldots, \theta_{n-1}\rangle$.
We claim that this homomorphism is injective as well. Note that the only normal subgroups of $S_n$ are $1$, $A_n$ and $S_n$, $n\geq 3$ and $n\neq 4$. We know $\rho_1\rho_2\in A_n$. From above computation we have $\theta_1\theta_2\neq 1.$ Thus $\rho_1\rho_2$ does not belong to the kernel and consequently, neither $A_n$ nor $S_n$ can be the kernel. So we are done for the case $n\geq 3$ and $n\neq 4$. For $n=4$, we have an extra normal subgroup of $S_4$, namely the Klein four-subgroup, $K_4= \{1, (1, 2)(3, 4), (1, 3)(2, 4), (1, 4)(2, 3)\}$. Like in the previous case, we can see that $\theta_1\theta_3\neq 1.$ Thus $\rho_1\rho_3=(1, 2)(3, 4)
$ does not belong to the kernel and consequently $K_4$ cannot be the kernel. Thus, the kernel of the homomorphism must be trivial and $\langle \theta_1, \theta_2, \ldots, \theta_{n-1}\rangle \cong S_n$.
\end{proof}

For each fixed $k$, we refer to the set $\{ \lambda_{k, l}\mid k< l\leq n\}$ as the  $k^{\mathrm{th}}$ column of the graph of $PVT_n$.

\begin{lemma}\label{first-row-fixed-implies-identity}
Let $\phi \in \Aut_{gr}(PVT_n)$ such that $\phi(\lambda_{1,j}) = \lambda_{1,j}$ for all $2\leq j\leq n$. Then $\phi$ is the identity automorphism.
\end{lemma}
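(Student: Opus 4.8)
The plan is to use two facts. First, since $PVT_n$ is a right-angled Artin group (Theorem~\ref{pvtn-right-angled-artin}), every graph automorphism permutes the vertex set $\mathcal{S}$ of its defining graph and preserves adjacency; equivalently, it permutes the generators $\lambda_{i,j}$ while preserving which pairs commute. Second, by the presentation in Theorem~\ref{pvtn-right-angled-artin}, $\lambda_{i,j}$ and $\lambda_{k,l}$ commute precisely when $\{i,j\}\cap\{k,l\}=\varnothing$, so the defining graph is the commuting graph of transpositions in $S_n$. Since $\phi$ fixes every vertex of the first column $C_1=\{\lambda_{1,2},\dots,\lambda_{1,n}\}$, the idea is to show that each remaining generator $\lambda_{i,j}$ (that is, with $2\le i<j\le n$) is determined among all of $\mathcal{S}$ by its adjacency pattern to $C_1$, which $\phi$ must preserve.

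First I would record the elementary observation that, for $2\le i<j\le n$, the non-neighbours of $\lambda_{i,j}$ inside $C_1$ are exactly $\lambda_{1,i}$ and $\lambda_{1,j}$: the vertex $\lambda_{1,m}\in C_1$ fails to commute with $\lambda_{i,j}$ iff $\{1,m\}\cap\{i,j\}\ne\varnothing$, and since $1\notin\{i,j\}$ this holds iff $m\in\{i,j\}$. As $\phi$ is a graph automorphism fixing $C_1$ pointwise, $\phi(\lambda_{i,j})$ must then be a vertex of $\mathcal{S}$ whose non-neighbours inside $C_1$ are precisely $\{\lambda_{1,i},\lambda_{1,j}\}$.

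Next I would classify which $\lambda_{k,l}\in\mathcal{S}$ have this property. If $k\ge 2$, the same computation shows the non-neighbours of $\lambda_{k,l}$ in $C_1$ are $\lambda_{1,k}$ and $\lambda_{1,l}$, forcing $\{k,l\}=\{i,j\}$, hence $\phi(\lambda_{i,j})=\lambda_{i,j}$. If $k=1$, then $\lambda_{1,l}$ is non-adjacent to all $n-2$ other vertices of $C_1$, so it can have exactly two non-neighbours in $C_1$ only when $n=4$. Thus for $n\ge 5$ one concludes at once that $\phi$ fixes every generator, i.e.\ $\phi=\id$; the cases $n=2,3$ are trivial, since then $\mathcal{S}\setminus C_1$ has at most one element, which is forced to be fixed. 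In the remaining case $n=4$ the invariant no longer separates $\lambda_{i,j}$ from the first-column vertex $\lambda_{1,l}$ with $\{l\}=\{2,3,4\}\setminus\{i,j\}$, but that alternative is excluded because $\phi$ is a bijection already fixing all of $C_1$ whereas $\lambda_{i,j}\notin C_1$; so $\phi(\lambda_{i,j})=\lambda_{i,j}$ here as well.

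The whole argument is combinatorial bookkeeping on the commuting graph; the only point needing genuine care — essentially the sole content beyond the $n\ge 5$ case — is the value $n=4$, where the $C_1$-adjacency invariant fails to distinguish an interior generator $\lambda_{i,j}$ from a first-column generator and must be supplemented by the injectivity of $\phi$ together with the fact that the entire first column is fixed.
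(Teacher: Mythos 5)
Your proof is correct. It rests on the same two ingredients as the paper's proof (graph automorphisms permute $\mathcal{S}$ preserving adjacency, and $\lambda_{i,j}$, $\lambda_{k,l}$ commute iff $\{i,j\}\cap\{k,l\}=\varnothing$), but you execute it more directly. The paper runs an induction over columns: from $\phi(\lambda_{1,2})=\lambda_{1,2}$ it deduces that the second column is invariant as a set, then uses a single non-commuting pair such as $(\lambda_{1,3},\lambda_{2,3})$ to pin down each element of that column, then passes to the third column via invariance of $N_{2,3}$, and so on. You instead observe that the full non-neighbour set of $\lambda_{i,j}$ ($i\ge 2$) inside the fixed first column $C_1$ is exactly $\{\lambda_{1,i},\lambda_{1,j}\}$, and that this fingerprint, together with injectivity of $\phi$ (which forces $\phi(\lambda_{i,j})\notin C_1$), determines $\lambda_{i,j}$ uniquely in one step. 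What your route buys is the elimination of the column-by-column bookkeeping and a cleaner uniform treatment of all $n$; your explicit handling of the $n=4$ ambiguity (where a first-column vertex shares the same $C_1$-fingerprint) is exactly the point that needs care, and you resolve it correctly via injectivity. The paper's iterative version, by contrast, generalises more readily to situations where one only knows invariance of successive $N_{i,j}$ rather than pointwise fixing of a whole column, but for this lemma both arguments are equally valid.
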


\begin{proof}
We begin by noting that if $\phi(\lambda_{i,j})=\lambda_{i,j}$ for some $i, j$, then being a graph automorphism, $\phi$ keeps the set $N_{i,j}$ invariant. For $1\leq i \leq n-1$, we have $$(i+1)^{\mathrm{th}}\; \text{column}\; \subseteq N_{i,i+1} \subseteq \bigcup_{k=1}^{i+1}\; (k^{\mathrm{th}}\; \text{column}).$$

We now proceed to the main part of the proof. Since  $\phi(\lambda_{1,2}) = \lambda_{1,2}$, $\phi$ keeps $N_{1, 2}$, i.e., the first two columns invariant. As the first column is already fixed pointwise, $\phi$ keeps the second column invariant. Now suppose that $\phi(\lambda_{2,3}) = \lambda_{2, j}$ for some $4\leq j \leq n$. Now we have two elements $\lambda_{1,3}$ and $\lambda_{2,3}$ which do not commute, but their images ($\lambda_{1,3}$ and $\lambda_{2,j}$, $j\geq 4$ respectively) under $\phi$ commute. This contradicts the hypothesis that $\phi$ is an automorphism. Thus $\phi(\lambda_{2,3}) = \lambda_{2, 3}$, and similarly we can show that $\phi(\lambda_{2,j}) = \lambda_{2, j}$, for all $4\leq j\leq n$.  Now consider the element $\lambda_{2,3}$. As $\phi$ fixes $\lambda_{2,3}$, it should keep $N_{2, 3}$ invariant. But $\phi$ already fixes first two columns pointwise. Thus $\phi$ keeps the third column invariant. By repeated use of above arguments we get the desired result.
\end{proof}

\begin{theorem}\label{group-automorphisms}
If $n \ge 3$ and $n \ne 4$, then $\Aut_{gr}(PVT_n) = \langle \theta_1, \theta_2, \ldots, \theta_{n-1}\rangle\cong S_n$. 
\end{theorem}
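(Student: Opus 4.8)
The plan is to show that the obvious inclusion $\langle \theta_1, \ldots, \theta_{n-1}\rangle \le \Aut_{gr}(PVT_n)$ from \eqref{thetas-in-graph} is an equality. We already know the left side is isomorphic to $S_n$ and acts transitively on the generating set $\mathcal{S}$. So it suffices to take an arbitrary $\phi \in \Aut_{gr}(PVT_n)$ and produce an element $\psi \in \langle \theta_1, \ldots, \theta_{n-1}\rangle$ with $\psi^{-1}\phi = \id$; then $\phi = \psi$ lies in the subgroup.

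First I would use transitivity of $\langle \theta_1, \ldots, \theta_{n-1}\rangle$ on $\mathcal{S}$ to arrange, after composing $\phi$ with a suitable $\psi_1 \in \langle \theta_1, \ldots, \theta_{n-1}\rangle$, that the resulting graph automorphism fixes the vertex $\lambda_{1,2}$. The next step is to pin down the image of the rest of the first column $\{\lambda_{1,3}, \lambda_{1,4}, \ldots, \lambda_{1,n}\}$. Here the structural input is Lemma \ref{first-row-fixed-implies-identity}: once the whole first column is fixed pointwise, the automorphism is the identity. So the heart of the argument is to show that the stabiliser of $\lambda_{1,2}$ inside $\langle \theta_1, \ldots, \theta_{n-1}\rangle$ already acts transitively enough on the possible images of the first column to reduce to the case where $\phi$ fixes $\lambda_{1,j}$ for every $j$. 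Concretely, the stabiliser of $\lambda_{1,2}$ contains the copy of $S_{n-2}$ generated by $\theta_3, \ldots, \theta_{n-1}$ (which permutes the indices $\{3,4,\ldots,n\}$ and hence permutes $\{\lambda_{1,3},\ldots,\lambda_{1,n}\}$ arbitrarily while fixing $\lambda_{1,2}$), together with $\theta_1$ (which fixes $\lambda_{1,2}$). One checks that a graph automorphism fixing $\lambda_{1,2}$ must send the first column into itself: the first column is exactly $st(\lambda_{1,2}) \setminus \big(N\text{-type constraints}\big)$ — more precisely, the elements of the first column other than $\lambda_{1,2}$ are characterised among all generators as those $\lambda_{k,l}$ for which $[\lambda_{k,l},\lambda_{1,2}]=1$ yet $[\lambda_{k,l},\lambda_{1,j}]\ne 1$ for suitable $j$; I would isolate the precise combinatorial characterisation of ``being in column $1$'' in terms of the commuting graph, using Proposition \ref{size-of-non-commuting-generators} and the explicit description of $N_{i,j}$. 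Having shown the first column is preserved as a set, I compose with an element of $\langle \theta_3,\ldots,\theta_{n-1}\rangle$ to make the induced permutation of $\{\lambda_{1,3},\ldots,\lambda_{1,n}\}$ trivial, so that now $\phi$ fixes the entire first column pointwise. Lemma \ref{first-row-fixed-implies-identity} then forces $\phi=\id$, completing the proof.

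The main obstacle I anticipate is the purely graph-theoretic step of showing that a graph automorphism fixing $\lambda_{1,2}$ must map column $1$ to itself — i.e., that column $1$ is an invariant of the commuting graph relative to the marked vertex $\lambda_{1,2}$. The subtlety is that the commuting graph is regular (Proposition \ref{size-of-non-commuting-generators}), so one cannot distinguish column $1$ by a naive valence count; instead one must use the finer local structure, e.g. that within $st(\lambda_{1,2})$ the subset $\{\lambda_{1,3},\ldots,\lambda_{1,n}\}$ is precisely the set of neighbours of $\lambda_{1,2}$ that fail to commute with some common further vertex, or equivalently analyse which neighbours of $\lambda_{1,2}$ lie in $N_{1,j}$ for the various $j$. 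Once this combinatorial lemma is in hand, the rest is bookkeeping with the transitivity statements already established for $\langle \theta_1,\ldots,\theta_{n-1}\rangle$, so I expect no further difficulty. Note the hypothesis $n\ne 4$ enters only through the identification $\langle \theta_1,\ldots,\theta_{n-1}\rangle\cong S_n$ from the previous lemma (and, implicitly, through the exotic extra graph automorphisms present when $n=4$, which is why that case is deferred to Theorem \ref{splitting-auto-pvt4}).
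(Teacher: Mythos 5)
There is a genuine gap at the step you yourself identify as the heart of the argument. You claim that a graph automorphism fixing $\lambda_{1,2}$ must send the first column into itself, and you propose to prove this by characterising the column combinatorially. Both the claim and the proposed characterisation are wrong. First, the characterisation is backwards: for $j\ge 3$ the generator $\lambda_{1,j}$ shares the index $1$ with $\lambda_{1,2}$, so $[\lambda_{1,j},\lambda_{1,2}]\neq 1$; the first column (minus $\lambda_{1,2}$) lies in $N_{1,2}$, not in $st(\lambda_{1,2})$, so it cannot be carved out of $st(\lambda_{1,2})$ as you suggest. Second, and more seriously, the setwise invariance itself fails: $\theta_1$ fixes $\lambda_{1,2}$ but swaps $\lambda_{1,j}\leftrightarrow\lambda_{2,j}$ for all $j\ge 3$, so the stabiliser of $\lambda_{1,2}$ in $\Aut_{gr}(PVT_n)$ does \emph{not} preserve column $1$. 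What is true (for $n\ge 5$) is that the commuting graph induced on $N_{1,2}$ is $K_{n-2,n-2}$ minus a perfect matching, a connected bipartite graph whose unique bipartition is exactly $\{\lambda_{1,j}\}_{j\ge 3}\sqcup\{\lambda_{2,j}\}_{j\ge 3}$; hence the stabiliser of $\lambda_{1,2}$ preserves this unordered pair of sets, and after composing with $\theta_1$ if necessary one may assume column $1$ is preserved setwise. With that corrected lemma your outline goes through: $\langle\theta_3,\dots,\theta_{n-1}\rangle$ induces the full symmetric group on $\{\lambda_{1,3},\dots,\lambda_{1,n}\}$ while fixing $\lambda_{1,2}$, so you can normalise to fix column $1$ pointwise and then invoke Lemma \ref{first-row-fixed-implies-identity}.

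For comparison, the paper sidesteps any setwise-invariance statement and instead does a concrete case analysis: after fixing $\lambda_{1,2}$, the image of $\lambda_{1,3}$ can be any element of $N_{1,2}$ (including the $\lambda_{2,j}$'s, which is precisely why its correcting element $\phi_2$ has four cases, two of which involve $\theta_1$); it then uses $N_{1,2}\cap lk(\lambda_{1,3})=\{\lambda_{2,j}\mid j\ge 4\}$ to pin down most of column $2$, and checks the remaining generators $\lambda_{2,3}$ and $\lambda_{1,j}$, $j\ge 4$, by exhibiting specific pairs whose commuting status would be violated otherwise. Your bipartition argument is a clean alternative route to the same normalisation, but as written the central claim is false and must be replaced by the ``preserved up to the swap realised by $\theta_1$'' version. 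A minor additional point: the hypothesis $n\neq 4$ does not enter through the isomorphism $\langle\theta_1,\dots,\theta_{n-1}\rangle\cong S_n$ (which the paper proves for all $n\ge 3$, including $n=4$); it enters because for $n=4$ the induced graph on $N_{1,2}$ degenerates to two disjoint edges and the connectivity/bipartition argument (equivalently, the paper's use of $n\ge 5$ in choosing auxiliary indices) breaks down, which is consistent with $|\Aut_{gr}(PVT_4)|=48>24$.
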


\begin{proof}
Since the graph of $PVT_n$ has $n(n-1)/2$ vertices, it follows that
$$S_n\cong \langle \theta_1, \theta_2, \ldots, \theta_{n-1}\rangle \leq \Aut_{gr}(PVT_n) \leq S_{n(n-1)/2}.$$
Thus, we have $\Aut_{gr}(PVT_3)\cong S_3$. For $n \ge 5$, it suffices to prove $\Aut_{gr}(PVT_n) \leq \langle \theta_1, \theta_2, \ldots, \theta_{n-1}\rangle$. Consider an arbitrary automorphism $\phi \in \Aut_{gr}(PVT_n)$. Our plan is to compose $\phi$ with finitely many automorphisms of $\langle \theta_1, \theta_2, \ldots, \theta_{n-1}\rangle$ to obtain the identity automorphism. This would be achieved by invoking Lemma \ref{first-row-fixed-implies-identity}.

As the automorphism group $\langle \theta_1, \theta_2, \ldots, \theta_{n-1}\rangle$ acts transitively on the generating set $\mathcal{S}$, there exists $\phi_1\in \langle \theta_1, \theta_2, \ldots, \theta_{n-1}\rangle$ such that $\phi_1 \phi (\lambda_{1,2}) = \lambda_{1,2}$. Thus, the set $N_{1, 2} = \{ \lambda_{i, j} \mid 1\leq i \leq 2,\; 3\leq j \leq n\}$ should be invariant under $\phi_1 \phi$. Now set

\begin{equation*}
\phi_2 :=
\begin{cases}
1 & \text{if}\;\; \phi_1 \phi (\lambda_{1,3}) = \lambda_{1,3},\\
\theta_1 & \text{if}\;\; \phi_1 \phi (\lambda_{1,3}) = \lambda_{2,3},\\
\theta_3 \theta_4 \cdots \theta_{j-1}    & \text{if}\;\; \phi_1 \phi (\lambda_{1,3}) = \lambda_{1,j}, j\ge 4,\\
\theta_3\theta_4\cdots    \theta_{j-1}\theta_1  & \text{if}\;\; \phi_1 \phi (\lambda_{1,3}) = \lambda_{2,j}, j\ge 4.
\end{cases}
\end{equation*}

Note that $\phi_2 \phi_1 \phi(\lambda_{1,2}) = \lambda_{1,2}$ and $\phi_2 \phi_1 \phi(\lambda_{1,3}) = \lambda_{1,3}$. It follows that $\phi_2 \phi_1 \phi$ keeps the sets $N_{1, 2}$ and $lk(\lambda_{1,3})$ invariant and thus keeps their intersection also invariant. Note that $$N_{1, 2}\cap lk(\lambda_{1,3}) = \big\{\lambda_{2, j}\mid 4\leq j \leq n \big\}.$$

We claim that there exists $\phi_3 \in \langle \theta_4, \theta_5, \ldots, \theta_{n-1}\rangle$ such that $\phi_3\phi_2 \phi_1 \phi(\lambda_{2,j}) = \lambda_{2, j}$ for all $4\leq j\leq n$. Suppose that $\phi_2 \phi_1 \phi(\lambda_{2, 4}) = \lambda_{2, j}$ for some $4\leq j\leq n$. Take $\psi_4=\theta_4\theta_5\ldots \theta_{j-1}\in \langle \theta_4, \theta_5, \ldots, \theta_{n-1}\rangle$. Note that $\psi_4\phi_2 \phi_1 \phi(\lambda_{2, 4}) = \psi_4(\lambda_{2, j}) = \lambda_{2, 4}$ and $\psi_4\phi_2 \phi_1 \phi$ keeps the set $\{\lambda_{2, j} \mid 5\leq j \leq n\}$ invariant. Now suppose that $\phi_2 \phi_1 \phi(\lambda_{2, 5}) = \lambda_{2, j}$ for some $5\leq j\leq n$. Take $\psi_5=\theta_5\theta_6\ldots \theta_{j-1}\in \langle \theta_5, \theta_6, \ldots, \theta_{n-1}\rangle \subseteq \langle \theta_4, \theta_5, \ldots, \theta_{n-1}\rangle$. We repeat the argument and take $\phi_3 = \psi_n\psi_{n-1}\ldots \psi_4 \in \langle \theta_4, \theta_5, \ldots, \theta_{n-1}\rangle$. It follows from the choice of $\phi_3$ that $\phi_3\phi_2 \phi_1 \phi(\lambda_{1, 2}) = \lambda_{1, 2}$ and $\phi_3\phi_2 \phi_1 \phi(\lambda_{1,3}) = \lambda_{1, 3}$.
\par
It now suffices to show that $\phi_3\phi_2 \phi_1 \phi(\lambda_{1, j})=\lambda_{1,j}$ for all $j \ge4$. If $\phi_3\phi_2 \phi_1 \phi(\lambda_{1, j})=\lambda_{2,3}$, then
there exists $k \neq j$ with $k\ge 4$ such that  $\phi_3\phi_2 \phi_1 \phi(\lambda_{1, k})=\lambda_{1,l}$ for some $l \ge 4$. But, this leads to a contradiction since non-commuting generators have commuting images. Hence, $\phi_3\phi_2 \phi_1 \phi(\lambda_{1, j})=\lambda_{1, l}$ for some $l \ge 4$. If $l \neq j$, then we again arrive at a contradiction as before. Thus, $\phi_3\phi_2 \phi_1 \phi$ fixes the first column and we are done by Lemma \ref{first-row-fixed-implies-identity}.
\end{proof}


\begin{theorem}\label{full-auto-decomposition}
$ \Aut(PVT_n) = \big\langle \Aut_{gr}(PVT_n), \Aut_{inv}(PVT_n), \Inn(PVT_n) \big\rangle$ for all $n\geq 5$.
\end{theorem}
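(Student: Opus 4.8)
The plan is to invoke the Laurence--Servatius generating set for $\Aut(A_\Gamma)$ recalled above: $\Aut(PVT_n)$ is generated by graph automorphisms, inversions, transvections and partial conjugations. Since $\Aut_{gr}(PVT_n) \le \langle \Aut_{gr}, \Aut_{inv}, \Inn\rangle$ and $\Aut_{inv}(PVT_n) \le \langle \Aut_{gr}, \Aut_{inv}, \Inn\rangle$ trivially, it suffices to prove that, for $n \ge 5$, (a) there is no non-trivial transvection, and (b) every partial conjugation is an inner automorphism. Throughout I will use the identification of the defining graph of $PVT_n$ with the commuting graph of the transpositions of $S_n$ (noted after the figures), so that $\lambda_{i,j}$ corresponds to the transposition $(i,j)$ and two distinct generators are adjacent exactly when the supports are disjoint; I will also use that the graph automorphisms $\langle \theta_1,\dots,\theta_{n-1}\rangle \cong S_n$ act transitively on $\mathcal{S}$ and preserve both the domination relation and the sets $\Gamma\setminus \Gamma(st(b))$.

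For (a): a transvection $\tau_{ab}$ exists only when $a \le b$, and since graph automorphisms permute the generators transitively while preserving $\le$, it is enough to show that $\lambda_{1,2} \not\le \lambda_{k,l}$ for every $\{k,l\}\neq\{1,2\}$. Here $lk(\lambda_{1,2})$ consists of all generators $\lambda_{p,q}$ with $p,q \in \{3,\dots,n\}$. I would split into two cases. If $|\{k,l\}\cap\{1,2\}|=1$, say the common point is $1$ and $l \in \{3,\dots,n\}$, then for any $m\in\{3,\dots,n\}\setminus\{l\}$ (such $m$ exists since $n\ge 4$) the generator $\lambda_{l,m}$ lies in $lk(\lambda_{1,2})$ but, sharing the point $l$ with $\lambda_{1,l}$ and being distinct from it, does not lie in $st(\lambda_{1,l})$; hence domination fails. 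If $\{k,l\}\cap\{1,2\}=\emptyset$, then $k,l\in\{3,\dots,n\}$, and because $n\ge 5$ there is some $m\in\{3,\dots,n\}\setminus\{k,l\}$; now $\lambda_{k,m}\in lk(\lambda_{1,2})$ while $\lambda_{k,m}$ shares the point $k$ with $\lambda_{k,l}$ and is distinct from it, so $\lambda_{k,m}\notin st(\lambda_{k,l})$, and again domination fails. Thus no domination relation holds among distinct generators, so the transvection subgroup is trivial.

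For (b): a partial conjugation $p_{b,C}$ is inner as soon as $\Gamma\setminus\Gamma(st(b))$ is connected, and by transitivity I may take $b=\lambda_{1,2}$. Then $\Gamma\setminus\Gamma(st(\lambda_{1,2}))$ is the induced subgraph on $N_{1,2}=\{\lambda_{1,m},\lambda_{2,m} \mid 3\le m\le n\}$. In the transposition picture, $\lambda_{1,m}$ is adjacent to $\lambda_{2,m'}$ precisely when $m\neq m'$, the $\lambda_{1,m}$ are pairwise non-adjacent, and so are the $\lambda_{2,m}$; hence this induced graph is the complete bipartite graph $K_{n-2,n-2}$ with the perfect matching $\{\lambda_{1,m}\lambda_{2,m} \mid 3\le m\le n\}$ removed. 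For $n-2\ge 3$, that is $n\ge 5$, this graph is connected (for $n-2=2$ it degenerates into two disjoint edges, which is exactly the source of the exceptional behaviour of $PVT_4$). Therefore every partial conjugation on every generator is inner.

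Combining (a) and (b), all four families of Laurence--Servatius generators of $\Aut(PVT_n)$ lie in $\langle \Aut_{gr}(PVT_n),\Aut_{inv}(PVT_n),\Inn(PVT_n)\rangle$, which gives the asserted equality. The main obstacle is the connectedness verification in (b): one must correctly recognize the induced subgraph on $N_{1,2}$ as $K_{n-2,n-2}$ minus a perfect matching and then argue connectedness, taking care that the statement genuinely fails at $n=4$, consistent with the separate treatment of $\Aut(PVT_4)$ elsewhere in the paper. The transvection computation in (a) is comparatively routine.
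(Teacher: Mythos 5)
Your proposal is correct and follows essentially the same route as the paper's proof: rule out transvections by showing no generator dominates another, show every partial conjugation is inner because $\Gamma\setminus\Gamma(st(\lambda_{1,2}))$ is connected (transferring to all generators by the transitive $S_n$-action of the graph automorphisms), and then invoke the Laurence--Servatius generating set. The only difference is presentational: you identify the complement of the star as $K_{n-2,n-2}$ minus a perfect matching, which is a cleaner way to see connectedness (and why $n=4$ fails) than the paper's explicit four-case path construction.
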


\begin{proof}
Our first claim is that $PVT_n$ does not admit any automorphism of transvection type. Equivalently, if $\lambda_{i, j} \neq \lambda_{k,\;l}$,  then neither $\lambda_{i, j} \leq \lambda_{k,\;l}$ nor $\lambda_{k,\;l} \leq \lambda_{i, j}$. Suppose that $\lambda_{i, j} \neq \lambda_{k,\;l}$. That means either $\{i, j\} \cap \{k, l \}$ is empty or a singleton set. 
Let us first suppose that the intersection is empty. Since $n\geq 5$, we can choose $1\leq q \leq n$ such that $q\notin \{i, j, k, l\}$. Set $x = \lambda_{i, q}$ (if $i< q$) or $x = \lambda_{q, i}$ (if $i> q$) and $y =\lambda_{k, q}$ (if $k< q$) or $y = \lambda_{q, k}$ (if $k> q$). Observe that $x\in lk(\lambda_{k, l}) \setminus st(\lambda_{i, j})$ and $y\in lk(\lambda_{i, j}) \setminus st(\lambda_{k, l})$. Thus, neither $\lambda_{k, l} \leq \lambda_{i, j}$ nor $\lambda_{i, j} \leq \lambda_{k, l}$.
\par
Now we suppose that the intersection is a singleton set. Without loss of generality, we may assume that $i=k$. Since $n \geq 5$, choose $m \notin \{ i, j, l \}$ and set $x = \lambda_{m,l}$ (if $m< l$) or $x = \lambda_{l, m}$ (if $m> l$) and $y =\lambda_{m, j}$ (if $m< j$) or $y = \lambda_{j, m}$ (if $m> j$). Observe that $x\in lk(\lambda_{i, j}) \setminus st(\lambda_{k,l})$ and $y\in lk(\lambda_{k,l}) \setminus st(\lambda_{i,j})$. Thus, $PVT_n$ does not admit any automorphism of transvection type for $n\geq 5$.
\par
Our second claim is that  $\Aut_{pc}(PVT_n)=\Inn(PVT_n)$. Equivalently, the subgraph $\Gamma \setminus \Gamma(st(\lambda_{i, j}))$ is connected  for each $\lambda_{i, j}$. Recall that the action of  $\Aut_{gr}(PVT_n) = \langle \theta_1, \theta_2, \ldots , \theta_{n-1}\rangle$ on $\mathcal{S}$ is transitive: given any generator $\lambda_{i, j} \in \mathcal{S}$, the automorphism $(\theta_{i-1}\theta_{i-2} \cdots \theta_{2}\theta_{1})(\theta_{j-1}\theta_{j-2} \cdots \theta_{3}\theta_{2})$ maps $\lambda_{1, 2}$ onto $\lambda_{i, j}$. Thus, it suffices to prove the claim for $\lambda_{1, 2}$. Note that the vertex set of $\Gamma \setminus \Gamma(st(\lambda_{1, 2}))$ is $\{\lambda_{1, i} , \lambda_{2, j} \mid 3\leq i, j \leq n\}$. Let $v_1, v_2$ be two vertices of $\Gamma \setminus \Gamma(st(\lambda_{1, 2}))$. We find a path joining these two vertices as per the following cases:
\begin{enumerate}
\item $v_1 = \lambda_{1, i}, v_2=\lambda_{1, j}, i\neq j$: Choose an integer $k$ such that $3\leq k\leq n$ and $i\neq k \neq j$. This is possible since $n\geq 5$. We see that there is an edge joining $\lambda_{1, i}$ and $\lambda_{2, k}$ and an edge joining $\lambda_{2, k}$ and $\lambda_{1, j}$.

\item $v_1 = \lambda_{2, i}, v_2=\lambda_{2, j}, i\neq j$: This is analogous to the previous case.

\item $v_1 = \lambda_{1, i}, v_2=\lambda_{2, j}, i\neq j$: Clearly there is an edge joining $\lambda_{1, i}$ and $\lambda_{2, j}$.

\item $v_1 = \lambda_{1, i}, v_2=\lambda_{2, i}$: Choose two integers $j, k$ such that $3\leq j, k \leq n$ and $j\neq i \neq k\neq j$. We can see that there are edges from $\lambda_{1, i}$ to $\lambda_{2, j}$, from $\lambda_{2, j}$ to $\lambda_{1, k}$, and from $\lambda_{1, k}$ to $\lambda_{2, i}$.
\end{enumerate}
Hence, the subgraph $\Gamma \setminus \Gamma(st(\lambda_{1, 2}))$ is connected. Thus, $\Aut_{pc}(PVT_n)=\Inn(PVT_n)$ for $n \ge 5$. Finally, by \cite{Laurence1995, Servatius1989}, we have $\Aut(PVT_n) = \big\langle \Aut_{gr}(PVT_n), \Aut_{inv}(PVT_n), \Inn(PVT_n) \big\rangle$.
\end{proof}

\begin{theorem}\label{aut-pvtn-main}
Let $n \ge 5$. Then there exist split exact sequences 
\begin{equation}\label{auto-seq-1}
1\rightarrow \Aut_{inv}(PVT_n)\rightarrow   \langle \Aut_{gr}(PVT_n), \Aut_{inv}(PVT_n) \rangle \rightarrow \Aut_{gr}(PVT_n) \to 1
\end{equation}
and 
\begin{equation}\label{auto-seq-2}
1\rightarrow \Inn(PVT_n)\rightarrow \Aut(PVT_n)\rightarrow  \langle \Aut_{gr}(PVT_n), \Aut_{inv}(PVT_n) \rangle \to 1.
\end{equation}
In particular,
$$\langle \Aut_{gr}(PVT_n), \Aut_{inv}(PVT_n) \rangle \cong \mathbb{Z}_2^{n(n-1)/2} \rtimes S_n$$
and 
$$\Aut(PVT_n) \cong PVT_n \rtimes (\mathbb{Z}_2^{n(n-1)/2} \rtimes S_n).$$
\end{theorem}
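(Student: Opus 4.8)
The plan is to exhibit both short exact sequences as arising from \emph{internal} semidirect product decompositions inside $\Aut(PVT_n)$, so that exactness and splitting become tautological, and then to combine the two decompositions. \textbf{First sequence.} I would begin by showing that $\Aut_{gr}(PVT_n)$ normalizes $\Aut_{inv}(PVT_n)$: for a graph automorphism $\sigma$ and a generator $\lambda_{i,j}$, a one-line check gives $\sigma\,\iota_{\lambda_{i,j}}\,\sigma^{-1}=\iota_{\sigma(\lambda_{i,j})}$, so conjugation by $\sigma$ merely permutes the generating inversions via the permutation of $\mathcal{S}$ induced by $\sigma$. Hence $\langle\Aut_{gr}(PVT_n),\Aut_{inv}(PVT_n)\rangle=\Aut_{inv}(PVT_n)\cdot\Aut_{gr}(PVT_n)$ is a subgroup in which $\Aut_{inv}(PVT_n)$ is normal. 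It remains to see $\Aut_{gr}(PVT_n)\cap\Aut_{inv}(PVT_n)=1$; for this I pass to the abelianization $PVT_n^{\mathrm{ab}}\cong\mathbb{Z}^{n(n-1)/2}$, whose distinguished basis consists of the classes $\overline{\lambda_{i,j}}$. A graph automorphism induces a permutation matrix on this basis, while a nontrivial product of inversions induces a diagonal matrix with at least one $-1$ on the diagonal; a matrix cannot be both unless it is the identity, so the intersection is trivial. Therefore $\langle\Aut_{gr}(PVT_n),\Aut_{inv}(PVT_n)\rangle=\Aut_{inv}(PVT_n)\rtimes\Aut_{gr}(PVT_n)$, which is exactly the split sequence \eqref{auto-seq-1} with splitting the inclusion of $\Aut_{gr}(PVT_n)$. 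Feeding in Lemma \ref{inversion-structure}, the identification $\Aut_{gr}(PVT_n)\cong S_n$, and the fact that the conjugation action above is the action of $S_n$ on the $2$-element subsets of $\{1,\dots,n\}$, one obtains $\langle\Aut_{gr}(PVT_n),\Aut_{inv}(PVT_n)\rangle\cong\mathbb{Z}_2^{n(n-1)/2}\rtimes S_n$.

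\textbf{Second sequence.} By Theorem \ref{full-auto-decomposition}, $\Aut(PVT_n)=\langle\Aut_{gr}(PVT_n),\Aut_{inv}(PVT_n),\Inn(PVT_n)\rangle$, and $\Inn(PVT_n)$ is normal in $\Aut(PVT_n)$, so $\Aut(PVT_n)=\Inn(PVT_n)\cdot\langle\Aut_{gr}(PVT_n),\Aut_{inv}(PVT_n)\rangle$. The crucial point is $\langle\Aut_{gr}(PVT_n),\Aut_{inv}(PVT_n)\rangle\cap\Inn(PVT_n)=1$: every element of $\langle\Aut_{gr}(PVT_n),\Aut_{inv}(PVT_n)\rangle$ has the form $\iota\sigma$ with $\iota\in\Aut_{inv}(PVT_n)$ and $\sigma\in\Aut_{gr}(PVT_n)$, and by the abelianization computation of the previous paragraph it induces a signed permutation matrix on $PVT_n^{\mathrm{ab}}$ which is the identity only when $\iota=\sigma=\id$; on the other hand every inner automorphism acts trivially on $PVT_n^{\mathrm{ab}}$. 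Consequently $\Aut(PVT_n)=\Inn(PVT_n)\rtimes\langle\Aut_{gr}(PVT_n),\Aut_{inv}(PVT_n)\rangle$, which is the split sequence \eqref{auto-seq-2}. Since $\Z(PVT_n)=1$ for $n\ge 5$ by Corollary \ref{center-of-PVT_n}, we have $\Inn(PVT_n)\cong PVT_n$, and combining with the first paragraph yields $\Aut(PVT_n)\cong PVT_n\rtimes\big(\mathbb{Z}_2^{n(n-1)/2}\rtimes S_n\big)$.

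\textbf{Main obstacle.} There is essentially no hard step: the only content beyond bookkeeping is the observation that $\Aut_{gr}(PVT_n)$ and $\Aut_{inv}(PVT_n)$ act faithfully on $PVT_n^{\mathrm{ab}}$ by signed permutation matrices while $\Inn(PVT_n)$ acts trivially, which simultaneously yields both intersection-triviality statements. Once these are in place, each ``$\rtimes$'' in the statement is an internal semidirect product, the exact sequences split by the evident inclusions, and the isomorphism types are read off from Lemma \ref{inversion-structure} and the already-established $\Aut_{gr}(PVT_n)\cong S_n$. If anything requires genuine care it is verifying that $\sigma\,\iota_{\lambda_{i,j}}\,\sigma^{-1}=\iota_{\sigma(\lambda_{i,j})}$ holds for \emph{all} graph automorphisms $\sigma$ and not merely the generators $\theta_k$, and that the classes $\overline{\lambda_{i,j}}$ together with their negatives are pairwise distinct in $PVT_n^{\mathrm{ab}}$, so that no nontrivial inversion can be a graph automorphism.
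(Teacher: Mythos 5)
Your proof is correct and follows essentially the same route as the paper: both sequences are realized as internal semidirect products via normality plus trivial intersection, with Theorem \ref{full-auto-decomposition} supplying the generation statement and Corollary \ref{center-of-PVT_n} giving $\Inn(PVT_n)\cong PVT_n$. The only cosmetic difference is that you certify $\Inn(PVT_n)\cap\langle\Aut_{gr}(PVT_n),\Aut_{inv}(PVT_n)\rangle=1$ via the faithful signed-permutation action on $PVT_n^{\mathrm{ab}}$, whereas the paper argues that such automorphisms preserve $\mathcal{S}\cup\mathcal{S}^{-1}$, whose distinct elements are non-conjugate; these are two phrasings of the same observation.
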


\begin{proof}
It follows from the construction of graph automorphisms that $\Aut_{gr}(PVT_n)$ normalises $\Aut_{inv}(PVT_n)$. Further, since $\Aut_{gr}(PVT_n) \cap \Aut_{inv}(PVT_n)=1$, the short exact sequence \eqref{auto-seq-1} splits.
\par

Recall from Theorem \ref{full-auto-decomposition} that $\Aut(PVT_n) = \langle \Aut_{gr}(PVT_n), \Aut_{inv}(PVT_n), \Inn(PVT_n) \rangle$. Note that any automorphism $\phi \in \langle \Aut_{gr}(PVT_n), \Aut_{inv}(PVT_n) \rangle$ keeps the set $\mathcal{S}\cup \mathcal{S}^{-1}$ invariant. Since two distinct elements of $\mathcal{S}\cup \mathcal{S}^{-1}$ are not conjugates of each other in $PVT_n$, it follows that $$\Inn(PVT_n) \cap  \langle \Aut_{gr}(PVT_n), \Aut_{inv}(PVT_n) \rangle = 1.$$ This gives the split sequence \eqref{auto-seq-2}. The remaining two assertions are immediate by Lemma \ref{inversion-structure}, Theorem \ref{group-automorphisms} and Corollary \ref{center-of-PVT_n}.
\end{proof}

Recall that $PVT_2 \cong \mathbb{Z}$ and $PVT_3 \cong F_3$. Thus, $\Aut(PVT_2) \cong \mathbb{Z}_2$, and the structure of $\Aut(F_3)$ is well-known, see, for example, \cite[Corollary 1]{ArmstrongForrestVogtmann2008}. The case $n=4$ is exotic and will be dealt with separately in Subsection \ref{sec-aut-pvt4}.
\medskip

Given a group $G$ and an automorphism $\phi \in \Aut(G)$, two elements $x, y \in G$ are said to lie in the same $\phi$-twisted conjugacy class if there exists $g \in G$ such that $x = gy\phi(g)^{-1}$.  If the group $G$ has infinitely many $\phi$-twisted conjugacy classes for each $\phi \in \Aut(G)$, then we say that the group $G$ has the $R_{\infty }$-property.
\par

Twisted conjugacy generalises the usual conjugacy and has deep connections with Nielsen fixed-point theory. The topic has attracted a lot of attention in recent years and we refer the reader to \cite{Cox, DekimpeGoncalves2014, Timur3, Fel'shtynTroitsky2015, Timur2, Goncalves2, Goncalves1, MubeenaSankaran2014, Timur4, Timur1} for some recent works on the topic.
\par

It is known that braid groups $B_n$  \cite{Felshtyn}, pure braid groups $P_n$ \cite{DekimpeGoncalvesOcampo} and  twin groups $T_n$ \cite{NaikNandaSingh2} have the $R_{\infty }$-property for all $n \geq 3$.  For Artin groups, it is known due to \cite{Juhasz} that length preserving automorphisms of certain Artin groups whose exponents are more than two have infinitely many twisted conjugacy classes. In a recent work \cite{DekimpeSenden}, several subclasses of right-angled Artin groups including Artin groups with transvection-free graphs, strongly regular graphs and non-complete graphs on at most 7 vertices are shown to have the $R_{\infty }$-property.  

\begin{theorem}\label{pvtn-r-infinity}
 $PVT_n$ has $R_\infty$-property if and only if $n \ge 3$.
\end{theorem}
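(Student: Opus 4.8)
The plan is to handle the two directions separately. For the direction showing $n\ge 3$ is necessary, note that $PVT_2\cong\mathbb{Z}$ and that the automorphism $x\mapsto x^{-1}$ of $\mathbb{Z}$ has exactly two twisted conjugacy classes, namely the two cosets of $2\mathbb{Z}$; hence $PVT_2$ fails the $R_\infty$-property. For the other direction one must show that the Reidemeister number $R(\phi)$ (the number of $\phi$-twisted conjugacy classes) is infinite for every $\phi\in\Aut(PVT_n)$ when $n\ge 3$. Throughout I use two standard facts: (a) $R(\phi)$ is unchanged when $\phi$ is composed with an inner automorphism or replaced by an $\Aut(PVT_n)$-conjugate; and (b) if $N$ is a characteristic subgroup of $PVT_n$ and $\bar\phi$ is the induced automorphism of $PVT_n/N$, then $R(\phi)\ge R(\bar\phi)$, so it suffices to produce one characteristic quotient on which the induced map has infinitely many twisted classes.

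The cleanest argument proceeds via acylindrical hyperbolicity. By Corollary~\ref{irred-raag}, $PVT_n$ is an irreducible right-angled Artin group, and for $n\ge 3$ its defining graph has $n(n-1)/2\ge 3$ vertices. A right-angled Artin group splits as a non-trivial direct product exactly when its graph is a join, so irreducibility says the graph is not a join; and a right-angled Artin group on at least two vertices whose graph is not a join is acylindrically hyperbolic. Since $PVT_n$ is torsion-free it has trivial finite radical, so it has the $R_\infty$-property, because torsion-free acylindrically hyperbolic groups do. This settles the ``if'' direction for all $n\ge 3$ at once, with no appeal to $\Aut(PVT_n)$.

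Alternatively, and closer in spirit to the present section, one can argue through $\Aut(PVT_n)$, at least for $n\ge 5$. By Theorem~\ref{full-auto-decomposition} together with fact (a), we may assume $\phi$ stabilises $\mathcal{S}\cup\mathcal{S}^{-1}$, say $\phi(\lambda_{i,j})=\lambda_{\sigma(i),\sigma(j)}^{\pm 1}$ for a fixed $\sigma\in S_n$ and a choice of signs. One then collapses a $\sigma$-invariant set of generators of $PVT_n$ — so that the quotient map is $\phi$-equivariant — onto a group with $R_\infty$: if $\sigma$ has a fixed point one uses $\Ker(f_n)$ of Section~\ref{section-decom-semidirect-product} and lands in $PVT_{n-1}$, inducting on $n$ with $PVT_3\cong F_3$ and $PVT_4\cong\mathbb{Z}^2*\mathbb{Z}^2*\mathbb{Z}^2$ as base cases; if $\sigma$ has a $3$-cycle one keeps only the corresponding triangle $\{\lambda_{a,b},\lambda_{b,c},\lambda_{a,c}\}$ and lands in $F_3$; if $\sigma$ has a cycle of length $k\ge 4$ one keeps the cyclically consecutive pairs in its support and lands in $A_{\overline{C_k}}$; and if $\sigma$ is a product of transpositions one keeps the four generators $\lambda_{a,c},\lambda_{a,d},\lambda_{b,c},\lambda_{b,d}$ attached to two of them and lands in $\mathbb{Z}^2*\mathbb{Z}^2$. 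These cases exhaust all $\sigma$, and each target is a non-abelian free group or an irreducible torsion-free right-angled Artin group, hence has $R_\infty$ by the previous paragraph, so $R(\phi)=\infty$. The point that makes this route non-routine — and that forces the use of non-abelian quotients — is that the signed permutation induced by $\phi$ on the abelianisation $\mathbb{Z}^{n(n-1)/2}$ of $PVT_n$ need not have $1$ as an eigenvalue, so its Reidemeister number can be finite; this is precisely the situation where every $\sigma$-orbit on $\mathcal{S}$ carries sign product $-1$. The main work is then the bookkeeping: verifying that the collapsed subsets are genuinely $\sigma$-invariant, that the quotients are not free abelian, and that the case list is complete. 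The main obstacle in the quick approach is simply to cite correctly the theorem that torsion-free acylindrically hyperbolic groups enjoy the $R_\infty$-property.
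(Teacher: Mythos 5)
Your proof is correct in substance but follows a genuinely different route from the paper. The paper's own proof is a two-line citation of Dekimpe--Senden: for $n=3,4$ it invokes their classification of the $R_\infty$-property for right-angled Artin groups on at most $7$ vertices (their Theorem 7.1.1), and for $n\ge 5$ it invokes their criterion for graphs without dominated vertices (their Theorem 3.3.3), which is only applicable because Theorem \ref{full-auto-decomposition} has already shown that $\Aut(PVT_n)$ is generated by graph automorphisms, inversions and inner automorphisms. Your primary argument bypasses the automorphism-group computation entirely: Corollary \ref{irred-raag} gives irreducibility, hence (by the easy direction of the join criterion) the defining graph is not a join; a non-cyclic right-angled Artin group whose graph is not a join is acylindrically hyperbolic, and a torsion-free (hence trivial finite radical) acylindrically hyperbolic group has the $R_\infty$-property. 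This is uniform in $n\ge 3$, shorter, and needs only Section \ref{section-presentation-pvtn} as input; the paper's route has the advantage of staying entirely within the combinatorial framework of Dekimpe--Senden and of reusing the $\Aut(PVT_n)$ structure it has already established. The entire weight of your argument rests on two external theorems (the acylindrical hyperbolicity of irreducible non-cyclic RAAGs, and the $R_\infty$-property for acylindrically hyperbolic groups with trivial finite radical, due to Fel'shtyn building on Hull--Osin and Antol\'in--Minasyan--Sisto), so precise citations are essential. Your treatment of $PVT_2$ is fine and matches the paper's (unproved) assertion. Two small remarks on your alternative $\Aut$-based argument: your fact (b) should be stated for $\phi$-invariant normal subgroups rather than characteristic ones, since the retraction kernels you use are only $\phi$-invariant after normalising $\phi$; and that route remains a sketch whose case-by-case bookkeeping you would still have to carry out, whereas your first argument is already complete.
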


\begin{proof}
Clearly, $PVT_2$ does not have $R_\infty$-property. Since the graphs of $PVT_3$ and $PVT_4$  are non-complete graphs on at most 7 vertices, it follows from \cite[Theorem 7.1.1]{DekimpeSenden} that these groups have $R_\infty$-property. For $n \ge 5$, Theorem \ref{full-auto-decomposition} gives $\Aut(PVT_n) = \big\langle \Aut_{gr}(PVT_n), \Aut_{inv}(PVT_n), \Inn(PVT_n) \big\rangle$. Since the graph of $PVT_n$ is not complete, the result now  follows from  \cite[Theorem 3.3.3]{DekimpeSenden}.
\end{proof}
\medskip

\subsection{Automorphism  group of $PVT_4$}\label{sec-aut-pvt4}
Recall that $PVT_4 \cong (\mathbb{Z} \times \mathbb{Z}) \ast (\mathbb{Z} \times \mathbb{Z}) \ast (\mathbb{Z} \times \mathbb{Z})$. For $1 \le i \le 3$, let $H_i$ denote the $i$-th free abelian factor in the free product decomposition of $PVT_4$. For simplicity of notation, we set
$$H_i  = \langle x_i,\; y_i \mid [x_i,\; y_i]=1\rangle.$$
Recall from \cite{Laurence1995, Servatius1989} that the automorphism group of a right-angled Artin group is generated by graph automorphisms, inversions, transvections and partial conjugations. By looking at the graph of $PVT_4$ (see Figure \ref{graph-pvt4}) one can easily see that $| \Aut_{gr}(PVT_4) | = 48$. In fact,  $\Aut_{gr}(PVT_4)$ is generated by the following five graph automorphisms:

$$
\sigma_1 : \left\{
\begin{array}{ll}
x_1 \longleftrightarrow y_1, & \\
x_i \longleftrightarrow x_i &~\textrm{if}~ i= 2, 3,\\
y_j \longleftrightarrow y_j &~\textrm{if}~ j= 2, 3,
\end{array}
\right.~~~
\sigma_2 : \left\{
\begin{array}{ll}
x_2 \longleftrightarrow y_2, & \\
x_i \longleftrightarrow x_i &~\textrm{if}~ i= 1, 3,\\
y_j \longleftrightarrow y_j &~\textrm{if}~ j= 1, 3,
\end{array}
\right.~~~
\sigma_3 : \left\{
\begin{array}{ll}
x_3 \longleftrightarrow y_3, & \\
x_i \longleftrightarrow x_i &~\textrm{if}~ i= 1, 2,\\
y_j \longleftrightarrow y_j &~\textrm{if}~ j= 1, 2,
\end{array}
\right.
$$

\begin{equation*}
\psi_1 :
\begin{cases}
x_1 \longleftrightarrow x_2, &\\
y_1 \longleftrightarrow y_2, &\\
x_3 \longleftrightarrow x_3, &\\
y_3 \longleftrightarrow y_3, &
\end{cases}
~\textrm{and}~~
\psi_2 :
\begin{cases}
x_1 \longleftrightarrow x_1, &\\
y_1 \longleftrightarrow y_1, &\\
x_2 \longleftrightarrow x_3, &\\
y_2 \longleftrightarrow y_3. &
\end{cases}
\end{equation*}
\par

Let $\iota_{x_i}$ and $\iota_{y_i}$ denote the inversion automorphisms that invert the generators $x_i$ and $y_i$, respectively, and fix all other generators. Let $\tau_{x_1y_1}$, $\tau_{y_1 x_1}$, $\tau_{x_2y_2}$, $\tau_{y_2x_2}$,  $\tau_{x_3y_3}$ and $\tau_{y_3x_3}$ be the transvection automorphisms that generate $\Aut_{tr}(PVT_4)$.
\par

Let $C_i$ denote the connected component of the graph $\Gamma$ of $PVT_4$ corresponding to the subgroup $H_i$ or equivalently to the vertex set $\{x_i, y_i \}$. Then, for the generator $x_1$  of $PVT_4$, there are three choices for a union $C$ of connected components of $\Gamma \setminus \Gamma (st(x_1))$. Thus, there are 18 partial conjugations that generate $\Aut_{pc}(PVT_4)$. The partial conjugations corresponding to the generator $x_1$ are as follows:

$$
p_{x_1, C_2}:  \left\{
\begin{array}{ll}
x_2\rightarrow x_1^{-1}x_2x_1, & \\
y_2\rightarrow x_1^{-1}y_2x_1, &\\
x_j\rightarrow x_j &~\textrm{if}~ j= 1, 3,\\
y_k\rightarrow y_k &~\textrm{if}~ k= 1, 3,
\end{array}
\right.~~~~~
p_{x_1, C_3}:
\left\{
\begin{array}{ll}
x_3\rightarrow x_1^{-1}x_3x_1, & \\
y_3\rightarrow x_1^{-1}y_3x_1, &\\
x_j\rightarrow x_j &~\textrm{if}~ j= 1, 2,\\
y_k\rightarrow y_k &~\textrm{if}~ k= 1, 2,
\end{array}
\right.
$$
\begin{equation*}
p_{x_1, C_2 \cup C_3}:
\begin{cases}
x_i\rightarrow x_1^{-1}x_ix_1 &~\textrm{if}~ i=2, 3, \\
y_i\rightarrow x_1^{-1}y_ix_1 &~\textrm{if}~ i=2, 3,\\
x_1\rightarrow x_1, & \\
y_1\rightarrow y_1. &
\end{cases}
\end{equation*}

Notice that $p_{x_1, C_2}~ p_{x_1, C_3} = p_{x_1, C_2\cup C_3} = p_{x_1, C_3} ~p_{x_1, C_2}$. Moreover, $p_{x_1, C_2\cup C_3}$  is the inner automorphism induced by $x_1$. By symmetry, the remaining 15 generating partial conjugations can be defined analogously, and we have
$$\Aut_{pc}(PVT_4) = \big \langle \Inn(PVT_4), \langle p_{x_1, C_2}, p_{y_1, C_2}, p_{x_2, C_3},p_{y_2, C_3}, p_{x_3, C_1}, p_{y_3, C_1} \rangle \big \rangle.$$
Since $\Aut(H_i)\cong \Aut(\mathbb{Z} \times \mathbb{Z}) \cong \GL_2 (\mathbb{Z})$, it follows that $\Aut(PVT_4)$ contains a subgroup isomorphic to $\GL_2 (\mathbb{Z}) \times \GL_2 (\mathbb{Z}) \times \GL_2 (\mathbb{Z})$.

\begin{lemma}\label{inversion-transvection-automorphism}
$$\langle \Aut_{inv}(PVT_4), \Aut_{tr}(PVT_4) \rangle \cong \GL_2 (\mathbb{Z}) \times \GL_2 (\mathbb{Z}) \times \GL_2 (\mathbb{Z}).$$
\end{lemma}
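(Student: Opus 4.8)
The plan is to exhibit the claimed group $\GL_2(\mathbb{Z})\times \GL_2(\mathbb{Z})\times \GL_2(\mathbb{Z})$ as exactly the subgroup of $\Aut(PVT_4)$ generated by inversions and transvections, using the free product decomposition $PVT_4 \cong H_1 * H_2 * H_3$ with $H_i = \langle x_i, y_i\mid [x_i,y_i]=1\rangle \cong \mathbb{Z}\times\mathbb{Z}$. First I would observe that every inversion $\iota_{x_i},\iota_{y_i}$ and every transvection $\tau_{x_iy_i},\tau_{y_ix_i}$ fixes setwise each of the three free factors $H_i$: each such generator only alters the pair $\{x_i,y_i\}$ for a single $i$ and sends it back into $H_i$. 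Hence every element of $\langle \Aut_{inv}(PVT_4),\Aut_{tr}(PVT_4)\rangle$ preserves the free product decomposition, acting on each factor $H_i$ independently and trivially on the other two. This gives a homomorphism
$$\Phi:\langle \Aut_{inv}(PVT_4),\Aut_{tr}(PVT_4)\rangle \longrightarrow \Aut(H_1)\times\Aut(H_2)\times\Aut(H_3)\cong \GL_2(\mathbb{Z})^3.$$

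Next I would check that $\Phi$ is surjective. It is classical that $\Aut(\mathbb{Z}\times\mathbb{Z})\cong\GL_2(\mathbb{Z})$ is generated by the two inversions and the two transvections of a basis — indeed $\GL_2(\mathbb{Z})$ is generated by the elementary matrices $\begin{psmallmatrix}1&1\\0&1\end{psmallmatrix}$, $\begin{psmallmatrix}1&0\\1&1\end{psmallmatrix}$ together with $\begin{psmallmatrix}-1&0\\0&1\end{psmallmatrix}$, which correspond precisely to $\tau_{x_iy_i},\tau_{y_ix_i},\iota_{x_i}$. Restricting attention to the generators supported on the $i$-th factor realizes all of $\Aut(H_i)$ in the $i$-th coordinate while fixing the other coordinates, so the image of $\Phi$ contains each of the three direct factors, hence all of $\GL_2(\mathbb{Z})^3$.

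Finally I would show $\Phi$ is injective. An automorphism $\phi$ in the kernel acts as the identity on each of $H_1,H_2,H_3$, hence fixes every generator $x_i,y_i$ of $PVT_4$, so $\phi=\id$. Therefore $\Phi$ is an isomorphism and
$$\langle \Aut_{inv}(PVT_4),\Aut_{tr}(PVT_4)\rangle\cong \GL_2(\mathbb{Z})\times\GL_2(\mathbb{Z})\times\GL_2(\mathbb{Z}),$$
as claimed. The only mildly delicate point is the surjectivity of $\Phi$: one must be sure that the four automorphisms of $PVT_4$ supported on the $i$-th factor generate the full $\GL_2(\mathbb{Z})$ acting on $H_i$ rather than just a proper subgroup, which reduces to the standard generation of $\GL_2(\mathbb{Z})$ by elementary and sign matrices; everything else is a direct verification that the listed generators respect the free product structure.
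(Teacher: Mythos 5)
Your proposal is correct and follows essentially the same route as the paper: both arguments rest on the observation that each inversion and transvection is supported on a single free factor $H_i$, and that the four generators supported on $H_i$ realize the standard generating matrices of $\GL_2(\mathbb{Z}) \cong \Aut(\mathbb{Z}\times\mathbb{Z})$. Your packaging via the homomorphism $\Phi$ and the kernel argument is a slightly more explicit way of justifying that the product of the three commuting factor-subgroups is direct, but the substance is identical to the paper's proof.
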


\begin{proof}
Recall that $$\langle \Aut_{inv}(PVT_4), \Aut_{tr}(PVT_4) \rangle  = \langle \iota_{x_1}, \iota_{x_2}, \iota_{x_3}, \iota_{y_1}, \iota_{y_2}, \iota_{y_3}, \tau_{x_1y_1}, \tau_{y_1x_1}, \tau_{x_2y_2}, \tau_{y_2x_2}, \tau_{x_3y_3}, \tau_{y_3x_3}\rangle.$$ 
Let us set
\begin{align*}
K_1  &= \langle \tau_{x_1y_1}, \tau_{y_1x_1}, \iota_{x_1}, \iota_{y_1} \rangle,&\\
K_2 & = \langle \tau_{x_2y_2}, \tau_{y_2x_2}, \iota_{x_2}, \iota_{y_2} \rangle,&\\
K_3 & = \langle \tau_{x_3y_3}, \tau_{y_3x_3}, \iota_{x_3}, \iota_{y_3}, \rangle.&
\end{align*}
Notice that $K_1, K_2, K_3$ act trivially on $H_2\ast H_3$, $H_1\ast H_3$ and $H_1\ast H_2$, respectively. Further, $[K_1,\; K_2] = [K_2,\; K_3] = [K_3,\; K_1] = 1$ and $K_1 \cong K_2 \cong K_3$. Thus, it suffices to prove that $K_1\cong \GL_2 (\mathbb{Z})$. But, this follows by recalling that
$$\GL_2 (\mathbb{Z})= \Big\langle
\begin{bmatrix}
1 & 1 \\
0 & 1 
\end{bmatrix},
\begin{bmatrix}
1 & 0 \\
1 & 1 
\end{bmatrix}, 
\begin{bmatrix}
-1 & 0 \\
0 & 1 
\end{bmatrix}
\Big\rangle
$$
and identifying generators of $K_1$ with matrices in $\GL_2 (\mathbb{Z})$ as
$$\tau_{x_1y_1} \rightarrow
\begin{bmatrix}
1 & 0 \\
1 & 1 
\end{bmatrix},
\tau_{y_1x_1}\rightarrow
\begin{bmatrix}
1 &1 \\
0 & 1 
\end{bmatrix},
\iota_{x_1} \rightarrow
\begin{bmatrix}
-1 & 0 \\
0 & 1 
\end{bmatrix}
~\textrm{and}~
\iota_{y_1} \rightarrow
\begin{bmatrix}
1 & 0 \\
0 & -1 
\end{bmatrix}
.$$
\end{proof}

\begin{lemma}\label{graph-inv-tr-structure}
$$\Aut_{gr}(PVT_4) \cap \langle \Aut_{inv}(PVT_4), \Aut_{tr}(PVT_4) \rangle = \langle \sigma_1, \sigma_2, \sigma_3 \rangle \cong \mathbb{Z}_2 \times \mathbb{Z}_2 \times \mathbb{Z}_2$$
and
\begin{align*}
\langle \Aut_{gr}(PVT_4), \Aut_{inv}(PVT_4), \Aut_{tr}(PVT_4) \rangle &= \langle \Aut_{inv}(PVT_4), \Aut_{tr}(PVT_4) \rangle \rtimes \langle \psi_1, \psi_2 \rangle &\\
& \cong \big ( \GL_2 (\mathbb{Z}) \times \GL_2 (\mathbb{Z}) \times \GL_2 (\mathbb{Z}) \big )\rtimes S_3.&
\end{align*}
\end{lemma}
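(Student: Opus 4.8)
The plan is to leverage the coordinate-wise description of $\langle\Aut_{inv}(PVT_4),\Aut_{tr}(PVT_4)\rangle$ from Lemma~\ref{inversion-transvection-automorphism} together with the identification of $\Aut_{gr}(PVT_4)$ with the automorphism group of the graph $\Gamma$ of $PVT_4$, where $\Gamma$ has three connected components $C_1,C_2,C_3$, each a single edge. Recall that $\langle\Aut_{inv}(PVT_4),\Aut_{tr}(PVT_4)\rangle = K_1\times K_2\times K_3$ with $K_i\cong\GL_2(\mathbb{Z})$ acting as all of $\GL_2(\mathbb{Z})$ on $H_i=\langle x_i,y_i\rangle$ and trivially on the other two free factors; in particular every such automorphism preserves each $H_i$. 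On the graph side, $\sigma_i$ swaps the two vertices of $C_i$, and $\psi_1,\psi_2$ permute the three components as the transpositions $(C_1\,C_2)$ and $(C_2\,C_3)$.

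For the first equality I would argue as follows. First, $\langle\sigma_1,\sigma_2,\sigma_3\rangle$ is contained in both subgroups: each $\sigma_i$ is a graph automorphism by construction, and in the factor $K_i$ it is realised by the permutation matrix $\left(\begin{smallmatrix}0&1\\1&0\end{smallmatrix}\right)\in\GL_2(\mathbb{Z})$, hence it is a product of inversions and transvections. Since the $\sigma_i$ are commuting involutions supported on pairwise disjoint vertex sets, $\langle\sigma_1,\sigma_2,\sigma_3\rangle\cong\mathbb{Z}_2^3$. For the reverse containment, take $\phi$ in the intersection; as an element of $K_1\times K_2\times K_3$ it fixes each $H_i$ setwise, and as a graph automorphism it permutes the generating set $\{x_j,y_j\}_j$, so (using that $PVT_4=H_1\ast H_2\ast H_3$) it must send $\{x_i,y_i\}$ to $\{x_i,y_i\}$ for each $i$. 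Hence $\phi$ restricts to an automorphism of each edge $C_i$ and is therefore a product of the $\sigma_i$, so the intersection equals $\langle\sigma_1,\sigma_2,\sigma_3\rangle$.

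For the second equality, I would first absorb the $\sigma_i$: by the first part $\sigma_1,\sigma_2,\sigma_3\in\langle\Aut_{inv}(PVT_4),\Aut_{tr}(PVT_4)\rangle$, and since $\Aut_{gr}(PVT_4)=\langle\sigma_1,\sigma_2,\sigma_3,\psi_1,\psi_2\rangle$, the group in question equals $\langle\Aut_{inv}(PVT_4),\Aut_{tr}(PVT_4),\psi_1,\psi_2\rangle$. Next I would verify, by a short computation on generators such as $\psi_1\tau_{x_1y_1}\psi_1^{-1}=\tau_{x_2y_2}$ and $\psi_1\iota_{x_1}\psi_1^{-1}=\iota_{x_2}$, that conjugation by $\psi_1$ interchanges $K_1$ and $K_2$ and fixes $K_3$, and similarly $\psi_2$ interchanges $K_2$ and $K_3$ and fixes $K_1$; thus $\langle\psi_1,\psi_2\rangle$ normalises $K_1\times K_2\times K_3$, acting by permuting the three coordinates. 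Consequently $\langle\Aut_{inv}(PVT_4),\Aut_{tr}(PVT_4)\rangle$ is normal in the group, which therefore equals the product $\langle\Aut_{inv}(PVT_4),\Aut_{tr}(PVT_4)\rangle\cdot\langle\psi_1,\psi_2\rangle$. Finally, $\psi_1,\psi_2$ act on $\{C_1,C_2,C_3\}$ as the transpositions $(1\,2)$ and $(2\,3)$, and this action is faithful (an element fixing every component fixes every $x_i$ and every $y_i$), so $\langle\psi_1,\psi_2\rangle\cong S_3$; moreover any element of $\langle\psi_1,\psi_2\rangle$ lying in $K_1\times K_2\times K_3$ must fix all three components, hence is trivial, giving $\langle\psi_1,\psi_2\rangle\cap\langle\Aut_{inv}(PVT_4),\Aut_{tr}(PVT_4)\rangle=1$. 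This yields the internal semidirect product $\langle\Aut_{inv}(PVT_4),\Aut_{tr}(PVT_4)\rangle\rtimes\langle\psi_1,\psi_2\rangle\cong(\GL_2(\mathbb{Z})\times\GL_2(\mathbb{Z})\times\GL_2(\mathbb{Z}))\rtimes S_3$, with the $S_3$-action permuting the three $\GL_2(\mathbb{Z})$ factors.

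I do not expect a genuine obstacle here: the argument is essentially bookkeeping. The one place where care is needed is the consistent use of the fact that a graph automorphism sends each generator to a generator and never to its inverse, so that $\Aut_{gr}(PVT_4)$ maps isomorphically onto $\Aut(\Gamma)$; and the verification of the conjugation identities $\psi_i K_j\psi_i^{-1}=K_{(i\,i{+}1)(j)}$, which is a routine check on the explicit transvection and inversion generators.
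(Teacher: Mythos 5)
Your proposal is correct and follows essentially the same route as the paper: you show each $\sigma_i$ lies in $K_i\cong\GL_2(\mathbb{Z})$ while any element of $K_1\times K_2\times K_3$ preserves each $H_i$ (so the intersection with $\Aut_{gr}(PVT_4)$ is exactly $\langle\sigma_1,\sigma_2,\sigma_3\rangle$), and then check on generators that $\psi_1,\psi_2$ permute the factors $K_i$, giving the semidirect product with $\langle\psi_1,\psi_2\rangle\cong S_3$. Your treatment of the reverse containment in the first equality is in fact slightly more explicit than the paper's, but the argument is the same.
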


\begin{proof}
Notice that $\psi_1$ and $\psi_2$ permute the $H_i$'s non-trivially, and hence $\langle \Aut_{inv}(PVT_4), \Aut_{tr}(PVT_4) \rangle \cap \langle \psi_1, \psi_2 \rangle = 1$. On the other hand $\sigma_1, \sigma_2, \sigma_3 \in \langle \Aut_{inv}(PVT_4), \Aut_{tr}(PVT_4) \rangle$, and hence the first assertion follows.
\par
For the second assertion notice that $\langle \psi_1, \psi_2 \rangle \cong S_3$ and $\psi_1^2=1=\psi_2^2$. It suffices to show that $\langle\psi_1, \psi_2 \rangle$ normalises $\langle \Aut_{inv}(PVT_4), \Aut_{tr}(PVT_4) \rangle$. A direct computation shows that\\
$$
\begin{array}{rclr}
\psi_1 \tau_{x_1y_1} \psi_1 = \tau_{x_2y_2},& \psi_1 \tau_{y_1x_1} \psi_1 = \tau_{y_2x_2},& \psi_1 \tau_{x_3y_3} \psi_1 = \tau_{x_3y_3}, & \psi_1 \tau_{y_3x_3} \psi_1 = \tau_{y_3x_3},\\
\psi_1 \iota_{x_1} \psi_1 = \iota_{x_2}, & \psi_1 \iota_{y_1} \psi_1 = \iota_{y_2}, & \psi_1 \iota_{x_3} \psi_1 =  \iota_{x_3},& \psi_1 \iota_{y_3} \psi_1 = \iota_{y_3},
\end{array}
$$
\\
and hence $\psi_1$ normalises $\langle \Aut_{inv}(PVT_4), \Aut_{tr}(PVT_4) \rangle$. By symmetry, the same assertion holds for $\psi_2$, and we get the desired result.
\end{proof}

\begin{lemma}\label{partialconjuation-is-normal}
$\Aut_{pc}(PVT_4)$ is normal in $\Aut(PVT_4)$.
\end{lemma}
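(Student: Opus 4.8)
The plan is to invoke the result of Laurence and Servatius \cite{Laurence1995, Servatius1989} that $\Aut(PVT_4)$ is generated by graph automorphisms, inversions, transvections and partial conjugations. Since $\Aut_{pc}(PVT_4)$ is, by definition, generated by \emph{all} partial conjugations $p_{b, C}$, to establish normality it is enough to show that conjugating an arbitrary $p_{b, C}$ by each generating graph automorphism, inversion and transvection yields an element of $\Aut_{pc}(PVT_4)$. I would carry this out type by type.

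For a graph automorphism $\sigma$, since $\sigma$ permutes the vertices of the graph $\Gamma$ preserving adjacency, it carries $st(b)$ to $st(\sigma(b))$ and hence the connected components of $\Gamma\setminus\Gamma(st(b))$ to those of $\Gamma\setminus\Gamma(st(\sigma(b)))$; evaluating on generators then gives $\sigma\, p_{b, C}\, \sigma^{-1} = p_{\sigma(b),\, \sigma(C)}$, again a partial conjugation. For an inversion $\iota_a$, a short case analysis on whether $a=b$, $a\in C$, or $a\notin C\cup\{b\}$ shows that $\iota_a\, p_{b, C}\, \iota_a = p_{b, C}^{-1}$ if $a=b$ and $\iota_a\, p_{b, C}\, \iota_a = p_{b, C}$ otherwise; in both cases the result lies in $\Aut_{pc}(PVT_4)$.

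The crux is conjugation by a transvection, which is where the free product structure $PVT_4 = H_1\ast H_2\ast H_3$ intervenes. The only transvections of $PVT_4$ are $\tau_{x_i y_i}$ and $\tau_{y_i x_i}$, because a vertex $x_i$ (resp.\ $y_i$) dominates only $y_i$ (resp.\ $x_i$); composing with graph automorphisms that permute the $H_i$ and swap $x_i\leftrightarrow y_i$ (these already normalise $\Aut_{pc}(PVT_4)$ by the previous step), it suffices to treat $\tau := \tau_{x_1 y_1}$, which sends $x_1\mapsto x_1 y_1$ and fixes every other generator. The decisive fact is that $st(x_1) = \{x_1, y_1\} = st(y_1)$, so for every partial conjugation $p_{b, C}$ the two vertices $x_1, y_1$ lie either both inside $C$ or both outside $C$. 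I would then split into the cases (i) $b\notin\{x_1, y_1\}$ and $\{x_1, y_1\}\cap C = \emptyset$, (ii) $b = x_1$, (iii) $b = y_1$, and (iv) $b\notin\{x_1, y_1\}$ and $\{x_1, y_1\}\subseteq C$, and compute $\tau\, p_{b, C}\, \tau^{-1}$ on the generators in each. In cases (i), (iii), (iv) one finds that $\tau$ and $p_{b, C}$ commute; for instance in case (iv) the element $x_1$ is sent to $b^{-1}x_1 y_1^{-1}b$ by $p_{b,C}\tau^{-1}$ and then $\tau$ restores it via the cancellation $b^{-1}(x_1 y_1)y_1^{-1}b = b^{-1}x_1 b = p_{b,C}(x_1)$. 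In case (ii) one instead obtains $\tau\, p_{x_1, C}\, \tau^{-1} = p_{x_1, C}\, p_{y_1, C}$, the automorphism conjugating the vertices of $C$ by $x_1 y_1$, which again lies in $\Aut_{pc}(PVT_4)$.

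Putting the three computations together, every generator of $\Aut(PVT_4)$ normalises $\Aut_{pc}(PVT_4)$, so $\Aut_{pc}(PVT_4)\trianglelefteq\Aut(PVT_4)$. I expect the transvection step, specifically case (iv), to be the main obstacle: a priori conjugating a non-inner partial conjugation by a transvection could escape $\Aut_{pc}(PVT_4)$, and it is exactly the coincidence $st(x_1)=st(y_1)$ — a consequence of $PVT_4$ being a free product of rank-two abelian groups — that rules this out.
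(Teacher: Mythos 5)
Your proposal is correct and follows essentially the same route as the paper: both proofs check that each Laurence--Servatius generator type (graph automorphisms, inversions, transvections) normalises $\Aut_{pc}(PVT_4)$, with the transvection case settled by the explicit identity $\tau_{x_1y_1}\, p_{x_1,C}\, \tau_{x_1y_1}^{-1} = p_{x_1,C}\,p_{y_1,C}$ (the paper records the equivalent $\tau_{x_1y_1}^{-1}\, p_{x_1,C_2}\, \tau_{x_1y_1} = p_{y_1,C_2}^{-1}p_{x_1,C_2}$). Your organisation around the observation $st(x_1)=st(y_1)$ is a slightly cleaner way to package the same computation, but it is not a different argument.
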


\begin{proof}
Note that $\Inn(PVT_4)\leq \Aut_{pc}(PVT_4)$.  Set $M=\langle p_{x_1, C_2}, p_{y_1, C_2}, p_{x_2, C_3}, p_{y_2, C_3}, p_{x_3, C_1}, p_{y_3, C_1} \rangle$. It suffices to show that $\phi^{-1} M \phi \leq \Aut_{pc}(PVT_4)$ for all $\phi \in \langle \Aut_{gr}(PVT_4), \Aut_{inv}(PVT_4), \Aut_{tr}(PVT_4) \rangle$.
\par
If $\phi = \psi_1$, then\\
$$
\begin{array}{rcl}
\psi_1 p_{x_1, C_2} \psi_1 = p_{x_2, C_1}, & \psi_1 p_{y_1, C_2} \psi_1 = p_{y_2, C_1}, & \psi_1 p_{x_2, C_3}  \psi_1 = p_{x_1, C_3},\\
\psi_1 p_{y_2, C_3}  \psi_1 =  p_{y_1, C_3}, & \psi_1  p_{x_3, C_1}  \psi_1 =  p_{x_3, C_2}, & \psi_1  p_{y_3, C_1}  \psi_1 =  p_{y_3, C_2}.
\end{array}
$$
\\
Thus, $\psi_1$ normalises $\Aut_{pc}(PVT_4)$. By symmetry, $\psi_2$ and hence $\langle \psi_1, \psi_2 \rangle$ normalises $\Aut_{pc}(PVT_4)$. By Lemma \ref{graph-inv-tr-structure}, it remains to show that $\langle \Aut_{inv}(PVT_4), \Aut_{tr}(PVT_4) \rangle$ normalises $\Aut_{pc}(PVT_4)$.
\par

If $\phi =  \iota_{x_1}$, then\\
$$
\begin{array}{rcl}
\iota_{x_1} p_{x_1, C_2}  \iota_{x_1} = p_{x_1, C_2}^{-1} , & \iota_{x_1} p_{y_1, C_2}   \iota_{x_1} = p_{y_1, C_2}, & \iota_{x_1} p_{x_2, C_3} \iota_{x_1} = p_{x_2, C_3},\\
\iota_{x_1} p_{y_2, C_3} \iota_{x_1} = p_{y_2, C_3}, & \iota_{x_1} p_{x_3, C_1}  \iota_{x_1} = p_{x_3, C_1}, & \iota_{x_1}  p_{y_3, C_1} \iota_{x_1} = p_{y_3, C_1}.
\end{array}
$$
\\

Thus, $\iota_{x_1}$ normalises $\Aut_{pc}(PVT_4)$. By symmetry, all the other inversions also normalise $\Aut_{pc}(PVT_4)$, and consequently $\Aut_{inv}(PVT_4)$ normalises $\Aut_{pc}(PVT_4)$.
\medskip

If $\phi = \tau_{x_1y_1}$, then
$$
\begin{array}{rcl}
\tau_{x_1y_1}^{-1} p_{x_1, C_2} \tau_{x_1y_1} = p_{y_1, C_2}^{-1} p_{x_1, C_2}, & \tau_{x_1y_1}^{-1} p_{y_1, C_2} \tau_{x_1y_1} = p_{y_1, C_2}, & \tau_{x_1y_1}^{-1} p_{x_2, C_3} \tau_{x_1y_1} = p_{x_2, C_3},\\
\tau_{x_1y_1}^{-1} p_{y_2, C_3}  \tau_{x_1y_1} = p_{y_2, C_3}, & \tau_{x_1y_1}^{-1} p_{x_3, C_1}  \tau_{x_1y_1} = p_{x_3, C_1}, & \tau_{x_1y_1}^{-1} p_{y_3, C_1}  \tau_{x_1y_1} = p_{y_3, C_1}.
\end{array}
$$
\medskip
Thus, $\tau_{x_1y_1}$ normalises $\Aut_{pc}(PVT_4)$. Similarly, one can show that all other transvections also normalise $\Aut_{pc}(PVT_4)$, which completes the proof of the lemma.
\end{proof}

Finally, we determine the structure of $\Aut_{pc}(PVT_4)$. A presentation of the group of partial conjugations of a right-angled Artin group has been constructed in \cite{Toinet}. 

\begin{lemma}\label{relations in Aut_pc(PVT_4)}
The group $\Aut_{pc}(PVT_4)$ has a presentation with generating set $$\{p_{x_i, C_j}, p_{y_i, C_j}~|~ i \neq j \text{ and } i, j =1, 2, 3\}$$ and following defining relations: 

\begin{enumerate}
\item $[p_{x_i, C_j},\; p_{x_i, C_k}] =[p_{y_i, C_j},\; p_{y_i, C_k}]= [p_{x_i, C_j},\; p_{y_i, C_k}] = [p_{x_i, C_j},\; p_{y_i, C_j}] =1$ for $i = 1, 2, 3$ with $i \neq j \neq k \neq i$.
\item $[p_{x_i, C_j} p_{x_i, C_k},\; p_{x_j, C_k}] = [p_{y_i, C_j} p_{y_i, C_k},\; p_{y_j, C_k}] = [p_{x_i, C_j} p_{x_i, C_k},\; p_{y_j, C_k}] = [p_{y_i, C_j} p_{y_i, C_k},\; p_{x_j, C_k}] =1$~~ for $i, j, k = 1, 2, 3$ with $i \neq j \neq k \neq i$.
\end{enumerate}
In particular, $$\Aut_{pc}(PVT_4) \cong (\mathbb{Z}^2 \ast \mathbb{Z}^2 \ast \mathbb{Z}^2) \rtimes (\mathbb{Z}^2 \ast \mathbb{Z}^2 \ast \mathbb{Z}^2). $$
\end{lemma}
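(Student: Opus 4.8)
The plan is to derive the presentation from Toinet's presentation of the partial conjugation subgroup of a right-angled Artin group \cite{Toinet}, specialised to the graph $\Gamma$ of $PVT_4$, which is a disjoint union of three edges $C_1, C_2, C_3$, where $C_i$ has vertex set $\{x_i, y_i\}$. First I would record the relevant combinatorics: for each vertex $v \in \{x_i, y_i\}$ one has $\Gamma \setminus \Gamma(st(v)) = C_j \sqcup C_k$ with $\{i,j,k\} = \{1,2,3\}$, so the only basic partial conjugations associated to $v$ that are not inner are $p_{x_i, C_j}$ and $p_{y_i, C_j}$ with $i \neq j$, while the remaining one, $p_{v, C_j \cup C_k}$, equals the product $p_{v, C_j} p_{v, C_k}$ and is the inner automorphism $\mathrm{conj}_v$. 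Substituting these identifications into the relations furnished by \cite{Toinet}, and discarding those that become trivial once the full conjugations are removed from the generating set, yields precisely the generating set and the relations (1)--(2) in the statement; this part is routine bookkeeping.

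The structural assertion is the substantive part, and I would extract it from the presentation just obtained. Set $N = \langle\, p_{x_i, C_j} p_{x_i, C_k},\ p_{y_i, C_j} p_{y_i, C_k} \mid i = 1,2,3 \,\rangle$, where for each $i$ the indices $j, k$ are the two elements of $\{1,2,3\} \setminus \{i\}$. Since $\{x_1, y_1, x_2, y_2, x_3, y_3\}$ generates $PVT_4$ and $p_{v, C_j} p_{v, C_k} = \mathrm{conj}_v$, we get $N = \Inn(PVT_4)$; by Corollary \ref{center-of-PVT_n} the centre of $PVT_4$ is trivial, so $N \cong PVT_4 \cong \mathbb{Z}^2 \ast \mathbb{Z}^2 \ast \mathbb{Z}^2$, and $N$ is normal in $\Aut(PVT_4)$, hence in $\Aut_{pc}(PVT_4)$. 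Next fix a choice $j(i) \neq i$ for each $i$ (say $j(1) = 2$, $j(2) = 3$, $j(3) = 1$) and put $Q = \langle\, p_{x_i, C_{j(i)}},\ p_{y_i, C_{j(i)}} \mid i = 1,2,3 \,\rangle$. Relation (1) gives $p_{x_i, C_{k(i)}} = p_{x_i, C_{j(i)}}^{-1}\bigl(p_{x_i, C_{j(i)}} p_{x_i, C_{k(i)}}\bigr) \in Q N$, and likewise for the $y$-generators, so every generator of $\Aut_{pc}(PVT_4)$ lies in $QN$; as $N$ is normal this gives $\Aut_{pc}(PVT_4) = NQ$.

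To finish, I would compute $\Aut_{pc}(PVT_4)/N$ directly from the presentation: imposing $p_{x_i, C_j} p_{x_i, C_k} = 1 = p_{y_i, C_j} p_{y_i, C_k}$ collapses the twelve generators to the six images $\overline{p_{x_i, C_{j(i)}}}, \overline{p_{y_i, C_{j(i)}}}$, makes every relation of type (2) trivial, and reduces type (1) to $[\overline{p_{x_i, C_{j(i)}}}, \overline{p_{y_i, C_{j(i)}}}] = 1$ for $i = 1,2,3$; hence $\Aut_{pc}(PVT_4)/N \cong \mathbb{Z}^2 \ast \mathbb{Z}^2 \ast \mathbb{Z}^2$. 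On the other hand the six generators of $Q$ satisfy (at least) the commuting relations $[p_{x_i, C_{j(i)}}, p_{y_i, C_{j(i)}}] = 1$ coming from (1), so $Q$ is a quotient of $\mathbb{Z}^2 \ast \mathbb{Z}^2 \ast \mathbb{Z}^2$; composing with the surjection $Q \twoheadrightarrow Q/(Q \cap N) = \Aut_{pc}(PVT_4)/N \cong \mathbb{Z}^2 \ast \mathbb{Z}^2 \ast \mathbb{Z}^2$ exhibits a surjective endomorphism of the group $\mathbb{Z}^2 \ast \mathbb{Z}^2 \ast \mathbb{Z}^2$, which is Hopfian (being a right-angled Artin group, it is linear and residually finite, cf. Corollary \ref{vtn-residually-finite}). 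Thus this endomorphism is an isomorphism, which forces $Q \cong \mathbb{Z}^2 \ast \mathbb{Z}^2 \ast \mathbb{Z}^2$ and $Q \cap N = 1$, whence $\Aut_{pc}(PVT_4) = N \rtimes Q \cong (\mathbb{Z}^2 \ast \mathbb{Z}^2 \ast \mathbb{Z}^2) \rtimes (\mathbb{Z}^2 \ast \mathbb{Z}^2 \ast \mathbb{Z}^2)$.

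The main obstacle will be the first step: correctly matching Toinet's general relations, which are stated in terms of arbitrary vertices and connected components of link complements, against the specific combinatorics of the three-edge graph, and verifying that no relation is lost or spuriously created when the full conjugations $p_{v, C_j \cup C_k}$ are eliminated from the generating set. I would also be careful to confirm that $N = \Inn(PVT_4)$ exactly, rather than a proper subgroup — this is where the triviality of $\Z(PVT_4)$ enters — since the whole splitting argument hinges on identifying $N$ with the free product $\mathbb{Z}^2 \ast \mathbb{Z}^2 \ast \mathbb{Z}^2$.
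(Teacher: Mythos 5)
Your proposal is correct, and its skeleton matches the paper's: both identify $N=\Inn(PVT_4)$ as the subgroup generated by the products $p_{v,C_j}p_{v,C_k}=\mathrm{conj}_v$, choose the same complement $Q=\langle p_{x_1,C_2},p_{y_1,C_2},p_{x_2,C_3},p_{y_2,C_3},p_{x_3,C_1},p_{y_3,C_1}\rangle$, and derive the relations from Toinet's theorem. Where you diverge is in how the splitting is certified. The paper writes down an explicit retraction $g\colon \Aut_{pc}(PVT_4)\to Q$ on generators, checks $\Inn(PVT_4)\subseteq\Ker(g)$, and deduces $\Ker(g)=\Inn(PVT_4)$ from the factorisation $w=xy$; it simply asserts $Q\cong\mathbb{Z}^2\ast\mathbb{Z}^2\ast\mathbb{Z}^2$ without argument, and it does not verify that $g$ respects the defining relations. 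You instead compute $\Aut_{pc}(PVT_4)/N$ directly from the presentation (using that the normal closure of the listed generators of $N$ equals $N$, since $\Inn$ is normal), obtain $\mathbb{Z}^2\ast\mathbb{Z}^2\ast\mathbb{Z}^2$, and then run the surjection $\mathbb{Z}^2\ast\mathbb{Z}^2\ast\mathbb{Z}^2\twoheadrightarrow Q\twoheadrightarrow Q/(Q\cap N)$ through the Hopfian property to force both maps to be isomorphisms, which simultaneously yields $Q\cong\mathbb{Z}^2\ast\mathbb{Z}^2\ast\mathbb{Z}^2$ and $Q\cap N=1$. Your route buys a cleaner justification of the two facts the paper leaves implicit (well-definedness of the retraction and the structure of $Q$), at the cost of invoking residual finiteness of RAAGs; the paper's route is shorter and more concrete but logically incomplete at exactly the points your argument patches. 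Your flagged concerns (matching Toinet's relations to the three-edge graph, and confirming $N=\Inn(PVT_4)$ exactly via $\Z(PVT_4)=1$) are the right ones, and both are handled correctly in your sketch.
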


\begin{proof}
Relations in (1) and (2) follow by direct computations together with \cite[Theorem 3.1]{Toinet}. Note that
$$ \Inn(PVT_4) = \big\langle p_{x_i, C_j}p_{x_i, C_k}, p_{y_i, C_j}p_{y_i, C_k} ~|~ i \neq j \neq k \neq i ,~j < k \text{ and } i,j,k=1,2,3 \big\rangle.$$
Setting $$\Aut_{pc\setminus inn}(PVT_4) = \big\langle p_{x_1, C_2}, p_{y_1, C_2}, p_{x_2, C_3}, p_{y_2, C_3}, p_{x_3, C_1}, p_{y_3, C_1} \big\rangle,$$
we see that
\begin{itemize}
\item $\Aut_{pc\setminus inn}(PVT_4) \cong \mathbb{Z}^2 \ast \mathbb{Z}^2 \ast \mathbb{Z}^2.$
\item $\Aut_{pc}(PVT_4) =\Inn(PVT_4) \Aut_{pc\setminus inn}(PVT_4)$.
\end{itemize}
Consider the surjective homomorphism $g: \Aut_{pc}(PVT_4) \to \Aut_{pc\setminus inn}(PVT_4)$ defined on generators as
$$
g: \left\{
        \begin{array}{ll}
      p_{x_1, C_2} \mapsto p_{x_1, C_2} ,  \\
      p_{x_1, C_3} \mapsto p_{x_1, C_2}^{-1},\\
      p_{y_1, C_2} \mapsto p_{y_1, C_2}, \\
      p_{y_1, C_3} \mapsto p_{y_1, C_2}^{-1},\\
        \end{array}
    \right.~~~
g: \left\{
        \begin{array}{ll}
      p_{x_2, C_1} \mapsto p_{x_2, C_3}^{-1} ,  \\
      p_{x_2, C_3} \mapsto p_{x_2, C_3},\\
      p_{y_2, C_1} \mapsto p_{y_2, C_3}^{-1}, \\
      p_{y_2, C_3} \mapsto p_{y_2, C_3},\\
        \end{array}
    \right.~~~
g: \left\{
        \begin{array}{ll}
      p_{x_3, C_1} \mapsto p_{x_3, C_1} ,  \\
      p_{x_3, C_2} \mapsto p_{x_3, C_1}^{-1},\\
      p_{y_3, C_1} \mapsto p_{y_3, C_1}, \\
      p_{y_3, C_2} \mapsto p_{y_3, C_1}^{-1}.\\
        \end{array}
    \right.
$$
\medskip

Note that $\Inn(PVT_4) \subseteq \Ker(g)$. Let $w \in \Ker(g)$ and write $w= xy$ for some $x \in \Inn(PVT_4)$ and $y \in \Aut_{pc\setminus inn}(PVT_4)$. Then we have
$$1 = g(w) =  g(x)g(y) =  y,$$
and hence $\Ker(g)=\Inn(PVT_4)$. This implies that
\begin{equation}\label{pc-inn-pvt4}
\Aut_{pc}(PVT_4) =\Inn(PVT_4)\rtimes \Aut_{pc\setminus inn}(PVT_4),
\end{equation}
and hence 
$$\Aut_{pc}(PVT_4) \cong (\mathbb{Z}^2 \ast \mathbb{Z}^2 \ast \mathbb{Z}^2) \rtimes (\mathbb{Z}^2 \ast \mathbb{Z}^2 \ast \mathbb{Z}^2).$$
This completes the proof.
\end{proof}
Combining the preceding lemmas yield the following theorem.

\begin{theorem}\label{splitting-auto-pvt4}
There exists a split exact sequence
$$1 \to  \Aut_{pc}(PVT_4) \to \Aut(PVT_4)  \to  \big\langle \Aut_{gr}(PVT_4), \Aut_{inv}(PVT_4), \Aut_{tr}(PVT_4)  \big\rangle \to 1.$$
In particular, 
\begin{align*}
\Aut(PVT_4) & = \Aut_{pc}(PVT_4) \rtimes \big\langle \Aut_{gr}(PVT_4), \Aut_{inv}(PVT_4), \Aut_{tr}(PVT_4)  \big\rangle&\\
&\cong ((\mathbb{Z}^2 \ast \mathbb{Z}^2 \ast \mathbb{Z}^2) \rtimes (\mathbb{Z}^2 \ast \mathbb{Z}^2 \ast \mathbb{Z}^2)) \rtimes \big ( (\GL_2 (\mathbb{Z}) \times \GL_2 (\mathbb{Z}) \times \GL_2 (\mathbb{Z})) \rtimes S_3 \big ).&
\end{align*}
\end{theorem}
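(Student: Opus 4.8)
The plan is to bolt together the preceding lemmas into a semidirect‑product decomposition. By the theorem of Laurence and Servatius \cite{Laurence1995, Servatius1989}, $\Aut(PVT_4)$ is generated by $\Aut_{gr}(PVT_4)$, $\Aut_{inv}(PVT_4)$, $\Aut_{tr}(PVT_4)$ and $\Aut_{pc}(PVT_4)$. Since $\Aut_{pc}(PVT_4)$ is normal in $\Aut(PVT_4)$ by Lemma \ref{partialconjuation-is-normal}, it follows that
$$\Aut(PVT_4) = \Aut_{pc}(PVT_4) \cdot \big\langle \Aut_{gr}(PVT_4), \Aut_{inv}(PVT_4), \Aut_{tr}(PVT_4) \big\rangle.$$
Consequently the restriction of the quotient map $\Aut(PVT_4) \to \Aut(PVT_4)/\Aut_{pc}(PVT_4)$ to the subgroup $\big\langle \Aut_{gr}(PVT_4), \Aut_{inv}(PVT_4), \Aut_{tr}(PVT_4) \big\rangle$ is surjective, so the whole statement reduces to showing that
$$\Aut_{pc}(PVT_4) \cap \big\langle \Aut_{gr}(PVT_4), \Aut_{inv}(PVT_4), \Aut_{tr}(PVT_4) \big\rangle = 1.$$

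For this I would pass to the abelianization $PVT_4^{\mathrm{ab}} \cong \mathbb{Z}^6$ (the free product $H_1 \ast H_2 \ast H_3$ abelianises to $\mathbb{Z}^2 \oplus \mathbb{Z}^2 \oplus \mathbb{Z}^2$). Every partial conjugation $p_{b,C}$ sends each generator to a conjugate of itself, hence acts trivially on $PVT_4^{\mathrm{ab}}$; therefore all of $\Aut_{pc}(PVT_4)$ lies in the kernel of the natural homomorphism $\Aut(PVT_4) \to \GL_6(\mathbb{Z})$. On the other hand, using the identification of Lemma \ref{graph-inv-tr-structure}, the group $\big\langle \Aut_{gr}(PVT_4), \Aut_{inv}(PVT_4), \Aut_{tr}(PVT_4) \big\rangle \cong \big(\GL_2(\mathbb{Z}) \times \GL_2(\mathbb{Z}) \times \GL_2(\mathbb{Z})\big) \rtimes S_3$ acts faithfully on $PVT_4^{\mathrm{ab}}$, the $i$‑th $\GL_2(\mathbb{Z})$ acting on the $\mathbb{Z}^2$‑summand coming from $H_i$ and $S_3$ permuting the three summands: an element acting trivially must have trivial $S_3$‑part, as it cannot permute the summands, and then trivial $\GL_2(\mathbb{Z})$‑factors. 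Hence the displayed intersection is trivial, the sequence
$$1 \to \Aut_{pc}(PVT_4) \to \Aut(PVT_4) \to \big\langle \Aut_{gr}(PVT_4), \Aut_{inv}(PVT_4), \Aut_{tr}(PVT_4) \big\rangle \to 1$$
is exact and split, and $\Aut(PVT_4) = \Aut_{pc}(PVT_4) \rtimes \big\langle \Aut_{gr}(PVT_4), \Aut_{inv}(PVT_4), \Aut_{tr}(PVT_4) \big\rangle$.

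To finish, I would substitute the explicit descriptions already established: $\Aut_{pc}(PVT_4) \cong (\mathbb{Z}^2 \ast \mathbb{Z}^2 \ast \mathbb{Z}^2) \rtimes (\mathbb{Z}^2 \ast \mathbb{Z}^2 \ast \mathbb{Z}^2)$ by Lemma \ref{relations in Aut_pc(PVT_4)}, and $\big\langle \Aut_{gr}(PVT_4), \Aut_{inv}(PVT_4), \Aut_{tr}(PVT_4) \big\rangle \cong \big(\GL_2(\mathbb{Z}) \times \GL_2(\mathbb{Z}) \times \GL_2(\mathbb{Z})\big) \rtimes S_3$ by Lemmas \ref{inversion-transvection-automorphism} and \ref{graph-inv-tr-structure}, yielding the stated isomorphism. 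The only genuinely load‑bearing new step is the triviality of the intersection, i.e.\ making sure the cokernel is exactly $\big\langle \Aut_{gr}(PVT_4), \Aut_{inv}(PVT_4), \Aut_{tr}(PVT_4) \big\rangle$ rather than a proper quotient; once the abelianization observation is in hand this is immediate, so I do not expect a serious obstacle — the real work has already been spread over Lemmas \ref{inversion-transvection-automorphism}--\ref{relations in Aut_pc(PVT_4)}.
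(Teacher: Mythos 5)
Your proposal is correct and follows essentially the same route as the paper: generation via Laurence--Servatius, normality of $\Aut_{pc}(PVT_4)$ from Lemma \ref{partialconjuation-is-normal}, triviality of the intersection with $\big\langle \Aut_{gr}(PVT_4), \Aut_{inv}(PVT_4), \Aut_{tr}(PVT_4)\big\rangle$, and then substitution of the structural lemmas. The only (cosmetic) difference is that you certify the trivial intersection through the faithful action of that subgroup on the abelianization $\mathbb{Z}^6$, whereas the paper argues that partial conjugations preserve conjugacy classes of generators while only the identity in the complementary subgroup does --- two phrasings of the same observation, with yours spelled out in somewhat more detail.
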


\begin{proof}
Note that each automorphism in $\Aut_{pc}(PVT_4)$ preserves conjugacy classes of generators. But, the only automorphism in $\langle \Aut_{gr}, \Aut_{inv}, \Aut_{tr} \rangle$ which preserves conjugacy classes of all the generators is the identity automorphism by Lemma \ref{graph-inv-tr-structure}. Hence $$\Aut_{pc}(PVT_4)\cap \big\langle \Aut_{gr}(PVT_4), \Aut_{inv}(PVT_4), \Aut_{tr}(PVT_4) \big\rangle = 1,$$
and the assertion follows.
\end{proof}
\medskip 

An automorphism of a group is called an $\IA$ automorphism if it acts as identity on the abelianisation of the group. Note that inner automorphisms are $\IA$ automorphisms.

\begin{corollary}\label{cor-ia-auto}
Each $\IA$ automorphism of $PVT_n$ is inner if and only if $n=2$ or $n \ge 5$.
\end{corollary}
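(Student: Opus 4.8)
The plan is to handle the three relevant ranges of $n$ separately, using the explicit descriptions of $\Aut(PVT_n)$ obtained earlier. For $n=2$ we have $PVT_2 \cong \mathbb{Z}$, whose abelianisation is $PVT_2$ itself, so the only $\IA$ automorphism is the identity, which is (trivially) inner; thus the claim holds for $n=2$. For $n \ge 5$, Theorem \ref{full-auto-decomposition} gives $\Aut(PVT_n) = \langle \Aut_{gr}(PVT_n), \Aut_{inv}(PVT_n), \Inn(PVT_n)\rangle$, and Theorem \ref{aut-pvtn-main} refines this to $\Aut(PVT_n) \cong \Inn(PVT_n) \rtimes \big(\Aut_{inv}(PVT_n) \rtimes \Aut_{gr}(PVT_n)\big)$ with $\Inn(PVT_n) \cong PVT_n$ (the center being trivial by Corollary \ref{center-of-PVT_n}). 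Given an $\IA$ automorphism $\phi$, write it as $\phi = \iota_g \circ \psi$ with $\iota_g \in \Inn(PVT_n)$ and $\psi \in \langle \Aut_{gr}, \Aut_{inv}\rangle$. Since inner automorphisms act trivially on the abelianisation, $\psi$ is also $\IA$. So it suffices to show that the only element of $\Aut_{inv}(PVT_n) \rtimes \Aut_{gr}(PVT_n) \cong \mathbb{Z}_2^{n(n-1)/2} \rtimes S_n$ that acts trivially on $PVT_n^{\mathrm{ab}} \cong \mathbb{Z}^{n(n-1)/2}$ is the identity. But $\psi$ permutes and possibly inverts the free abelian generators $\overline{\lambda_{i,j}}$: the graph part permutes them according to the $S_n$-action, and the inversion part negates a subset of them. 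Such a signed permutation matrix is the identity matrix only when the permutation is trivial and no sign is flipped, i.e.\ $\psi = \mathrm{id}$. Hence $\phi = \iota_g$ is inner.

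For $n=3$ and $n=4$ we must instead exhibit an $\IA$ automorphism that is \emph{not} inner. For $n=3$, $PVT_3 \cong F_3$, and $F_3$ famously admits non-inner $\IA$ automorphisms (the Magnus generators of $\mathrm{IA}(F_3)$ are not inner, since $\mathrm{Inn}(F_3) \cong F_3$ has infinite index in $\mathrm{IA}(F_3)$); concretely one takes $\lambda_{1,2} \mapsto \lambda_{1,2}$, $\lambda_{1,3} \mapsto \lambda_{1,3}$, $\lambda_{2,3} \mapsto \lambda_{1,2}^{-1}\lambda_{2,3}\lambda_{1,2}$, which fixes the abelianisation but is not conjugation by any single element. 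For $n=4$, Theorem \ref{splitting-auto-pvt4} shows $\Aut(PVT_4)$ contains the transvection subgroup $\Aut_{tr}(PVT_4)$, and transvections such as $\tau_{x_1 y_1}$ (sending $x_1 \mapsto x_1 y_1$ and fixing the other generators) are \emph{not} $\IA$, but the partial conjugations that are not inner serve our purpose: a partial conjugation $p_{b,C}$ with $C$ a proper nonempty union of components of $\Gamma \setminus \Gamma(st(b))$ acts by conjugation on the generators in $C$, hence trivially on the abelianisation, so it is $\IA$; and it is not inner because $\Aut_{pc}(PVT_4) = \Inn(PVT_4) \rtimes \Aut_{pc\setminus inn}(PVT_4)$ with $\Aut_{pc\setminus inn}(PVT_4) \cong \mathbb{Z}^2 * \mathbb{Z}^2 * \mathbb{Z}^2$ nontrivial, by equation \eqref{pc-inn-pvt4}. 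Thus $n=4$ admits a non-inner $\IA$ automorphism, completing the "only if" direction.

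The main obstacle is the $n \ge 5$ direction, specifically the step asserting that no nontrivial element of $\Aut_{inv}(PVT_n) \rtimes \Aut_{gr}(PVT_n)$ is $\IA$. This needs the observation that the induced action on $PVT_n^{\mathrm{ab}}$ is faithful on this subgroup: the graph automorphisms $\theta_k$ act by the genuine permutation of the basis $\{\overline{\lambda_{i,j}}\}$ described by Remark \ref{rephrasing-action} (note $\theta_k$ was defined to \emph{fix} $\overline{\lambda_{k,k+1}}$ rather than invert it, so its abelianised action is an honest permutation matrix, not merely a signed one), and the inversions act by sign changes; a product of a permutation matrix with a diagonal $\pm 1$ matrix equals $I$ iff both factors are trivial. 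One must be slightly careful that composing $\Aut_{gr}$ and $\Aut_{inv}$ does not produce unexpected coincidences, but since the semidirect product structure $\mathbb{Z}_2^{n(n-1)/2} \rtimes S_n$ from Theorem \ref{aut-pvtn-main} injects into the group of signed permutation matrices of size $n(n-1)/2$ (which is exactly $\mathbb{Z}_2^{n(n-1)/2} \rtimes S_{n(n-1)/2}$), faithfulness is automatic. This reduces the whole statement to elementary linear algebra over $\mathbb{Z}$ once the earlier structural theorems are invoked.
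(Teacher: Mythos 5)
Your proposal is correct and follows essentially the same route as the paper: triviality for $n=2$, Magnus's non-inner $\IA$ automorphisms of $F_3$ for $n=3$, non-inner partial conjugations via \eqref{pc-inn-pvt4} for $n=4$, and the structure of $\Aut(PVT_n)$ from Theorem \ref{aut-pvtn-main} for $n\ge 5$. Your signed-permutation-matrix argument is simply the ``direct check'' the paper leaves to the reader, carried out explicitly (and correctly, including the observation that $\theta_k$ fixes rather than inverts $\lambda_{k,k+1}$).
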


\begin{proof}
Note that the $\IA$ automorphism group of $PVT_2$ is obviously trivial. Magnus \cite{Magnus1935} gave generators of the group of $\IA$ automorphisms of $F_3\cong PVT_3$ and showed that it contains non-inner automorphisms. Clearly, $\Aut_{pc}(PVT_4)$ is a subgroup of the group of $\IA$ automorphisms of $PVT_4$. It follows from \eqref{pc-inn-pvt4} that $\Aut_{pc}(PVT_4)$ contains non-inner automorphisms as well. For $n \ge 5$, a direct computation using the description of $\Aut(PVT_n)$ in Theorem \ref{aut-pvtn-main} shows that the only $\IA$ automorphisms of $PVT_n$ are the inner automorphisms.
\end{proof} 
\medskip
\medskip

\begin{ack}
Tushar Kanta Naik acknowledges support from the NBHM via grant number 0204/3/2020/R\&D-II/2475. Neha Nanda acknowledges support from the Winning Normandy Postdoctoral Fellowship. Mahender Singh is supported by the Swarna Jayanti Fellowship grants DST/SJF/MSA-02/2018-19 and SB/SJF/2019-20.
\end{ack}

\end{document}